\definecolor{MyBlue}{rgb}{0,0.2,0.6}
\newcommand{\R}{\mathbb{R}}
\DeclareMathOperator{\Nil}{Nil}
\DeclareMathOperator{\PSL}{PSL}
\DeclareMathOperator{\Iso}{Iso}
\DeclareMathOperator{\Sol}{Sol_3}
\DeclareMathOperator{\ext}{ext}
\DeclareMathOperator{\turn}{turn}
\DeclareMathOperator{\Stab}{Stab}
\theoremstyle{plain}
\newtheorem{Theorem}{Theorem}
\newtheorem{Lem}{Lemma}
\newtheorem{Conj}{Conjecture}
\newtheorem{Prop}{Proposition}
\theoremstyle{definition}
\newtheorem*{Def}{Definition}
\newtheorem{Rem}{Remark}
\newtheoremstyle{exercise}
  {8pt}                   
  {2pt}                   
  {\normalfont}           
  {}                      
  {\normalfont\bfseries}  
  {}                      
  {\newline}              
  {}
\theoremstyle{exercise}
\renewcommand*{\c@Lem}{\c@Theorem}
\renewcommand*{\p@Lem}{\p@Theorem}
\renewcommand*{\c@Cor}{\c@Theorem}
\renewcommand*{\p@Cor}{\p@Theorem}
\renewcommand*{\c@Prop}{\c@Theorem}
\renewcommand*{\p@Prop}{\p@Theorem}
\renewcommand*{\c@Ex}{\c@Theorem}
\renewcommand*{\p@Ex}{\p@Theorem}
\renewcommand*{\c@Exer}{\c@Theorem}
\renewcommand*{\p@Exer}{\p@Theorem}
\renewcommand*{\c@Rem}{\c@Theorem}
\renewcommand*{\p@Rem}{\p@Theorem}
\renewcommand*{\c@Conj}{\c@Theorem}
\renewcommand*{\p@Conj}{\p@Theorem}
\newcommand{\mcH}{{\Small{MC}}$H$}
\newcommand{\mcone}{{\Small{MC}}$1$}
\newcommand{\Lcal}{\mathcal{L}}
\DeclareMathOperator{\sech}{sech}
\DeclareMathOperator{\atanh}{arctanh}
\newcommand{\N}{\mathbb{N}}
\newcommand{\Ekt}{\mathbb{E}(\kappa,\tau)}
\def\widebar{\accentset{{\cc@style\underline{\mskip10mu}}}}
\def\Widebar{\accentset{{\cc@style\underline{\mskip8mu}}}}
\begin{document}
\title[Invariant CMC cylinders in homogeneous three-manifolds]{Cylinders as Left Invariant CMC Surfaces in $\Sol$ and $E(\kappa,\tau)$-Spaces Diffeomorphic to $\R^3$}
\date{\today}
\author{Miroslav Vr\v{z}ina}
\address{Technische Universit\"at Darmstadt, Fachbereich Mathematik (AG 3 ``Geometrie und Approximation''),
	Schlossgartenstr.~7, 64289 Darmstadt, Germany}
\email{vrzina@mathematik.tu-darmstadt.de}
\subjclass[2000]{Primary 53A10; Secondary 53C22, 53C30}

\keywords{Differential geometry, invariant surfaces, minimal, constant mean curvature, homogeneous three-manifolds, geodesics}
\begin{abstract}
	In the present paper we give a geometric proof for the existence of cylinders with constant mean curvature $H>H(X)$ in certain simply connected homogeneous three-manifolds $X$ diffeomorphic to $\R^3$, which always admit a Lie group structure. Here, $H(X)$ denotes the critical value for which constant mean curvature spheres in $X$ exist. Our cylinders are generated by a simple closed curve under a one-parameter group of isometries, induced by left translations along certain geodesics.  In the spaces $\Sol$ and $\widetilde{\PSL}_2(\R)$ we establish existence of new properly embedded constant mean curvature annuli. We include computed examples of cylinders in $\Sol$ generated by non-embedded simple closed curves.
\end{abstract}
\maketitle
\section*{Introduction}
Surfaces with constant mean curvature (for short \mcH-surfaces) are a classical topic in differential geometry. In the present paper we study invariant \mcH-surfaces whose mean curvature equation reduces to an ordinary differential equation. When this ODE has a simple closed curve as solution, we call the invariant surface generated by this curve an \mcH-\emph{cylinder}. We focus on various simply connected three-dimensional homogeneous manifolds, in which constant mean curvature surfaces have been studied recently, for example in \cite{Abresch}, \cite{Torralbo}, \cite{MP}, \cite{DanielMira} and \cite{MeeksSphere}. Our approach to this problem is as follows: We consider left invariant \mcH-surfaces generated by graphical curves. The invariance is with respect to left translations along a geodesic with certain symmetries. A comparison with \mcH-spheres yields properties of the graph which let us extend it to a simple closed solution curve by applying symmetries of the geodesic. 

In the first part of this paper we work in $\Sol$, a homogeneous space which has drawn particular attention in \cite{DanielMira}, since they settled existence and uniqueness of embedded \mcH-spheres in $\Sol$ (in combination with \cite{MeeksSphere}). It has only a three-dimensional isometry group and is thus the least symmetric homogeneous manifold among the Thurston geometries. A generic homogeneous three-manifold can have less symmetries, see \cite[Prop. 2.21. (4)]{MP}. For this reason, we find it instructive to start with $\Sol$ in order to understand which geometric properties and structures are needed to prove existence of \mcH-cylinders. Let us consider $\Sol$ a Riemannian fibration $\Sol\to\R$ with $\R^2$-fibres and base $\R$. At each point of $\Sol$ there are three distinguished geodesics which admit rotations of angle $\pi$: The base and two orthogonal lines in a $\R^2$-fibre, that is, one horizontal geodesic and two vertical geodesics. Since $\Sol$ is also a metric Lie group, left translations along any of these three geodesics define a one-parameter family of isometries. For each of these geodesics we construct surfaces which are invariant under the corresponding family of left translations. Without the need to state the ODE explicitly we can prove, in \autoref{thmcyl} and \autoref{thmcyl2}, that embedded horizontal and vertical \mcH-cylinders with $H>0$ exist. The \mcH-cylinders invariant by left translations along the base have been conjectured to exist by Lopez in \cite{Lopez14}, on the grounds of computed examples. We note that surfaces invariant under translation along orthogonal lines in a $\R^2$-fibre have not yet been considered,  and so we consider our families to be the first examples of properly embedded annuli with constant mean curvature $H>0$ in $\Sol$.

We also include images of computed examples of \mcH-cylinders in $\Sol$ which have the same invariance, but are only immersed (\autoref{immersed} and \autoref{bifurcation}). This class seems rich: We conjecture that there are infinitely many simple closed solution curves with self-intersections which generate such examples. This surprising phenomenon cannot occur in ambient spaces with higher dimensional isometry groups, for instance $\R^3$, where translations along and rotations about the same axis commute and thus imply rotational invariance of translationally invariant surfaces.

In a second part we consider the more standard Riemannian fibrations $E\to B$ over surfaces with geodesic fibres. They are parametrised as $E(\kappa,\tau)$-spaces with base curvature $\kappa$ and bundle curvature $\tau$. The $E(\kappa,\tau)$-spaces have $4$- or $6$-dimensional isometry groups. We restrict ourselves to $\kappa\leq0$ for two reasons: 1. The Riemannian product space $E(\kappa,0)=\mathbb{S}^2(\kappa)\times\R$ for $\kappa>0$ does not admit a Lie group structure, so that translations are not well-defined. 2. The Berger spheres $E(\kappa,\tau)$, where $\kappa>0$ and $\tau\neq0$, admit \mcH-spheres which are possibly self-intersecting; comparison spheres for our geometric approach are not available.  

In case of a $4$-dimensio\-nal isometry group, rotations about non-vertical geodesics need not be isometries, and their respective geodesic tubes need not have constant mean curvature. However, for $\kappa\leq0$ the $E(\kappa,\tau)$-spaces are also metric Lie groups, so that left translations along geodesics are isometries. A reasoning similar to the first part proves existence of \mcH-cylinders with ${H>H(E)}$, invariant under left translation along those geodesic axes which have a geodesic projection into the base space $B$, see \autoref{thmcylekt}. For $\kappa=-1$ and $\tau=0$ this includes tilted \mcH-cylinders in $\mathbb{H}^2\times\R$; we also get horizontal \mcH-cylinders in $\widetilde{\PSL}_2(\R)$. Again, we do not need to refer to the explicit form of the ODE. In \autoref{propweight} we calculate the horizontal diameter of these surfaces. The argument is based on a weight formula for \mcH-surfaces. We discuss possible generalisations and related open problems to conclude the paper.

\part{\texorpdfstring{\mcH}{mcH}-cylinders in \texorpdfstring{$\Sol$}{Sol}}\label{chapter1}
%
In Section 1 of this part we describe the metric Lie group $\Sol$ as a semi-direct product $\R^2\ltimes_A\R$, which fibres over $\R$ with $\R^2$-fibres. Section 2 is devoted to constant mean curvature surfaces invariant under left translations along the base: One problem concerns the ODE satisfied by a graph generating such a surface. The other problem is the geometric discussion of the ODE and the extension of the graphical solution to a simple closed embedded curve. For this class of surfaces we also include images of computed examples. In Section 3 we proceed analogously and construct \mcH-cylinders invariant under left translations along two special geodesics in a $\R^2$-fibre of $\Sol$.

\section{Preliminaries on \texorpdfstring{$\Sol$}{Sol}}\label{chapter11}

The space $\Sol$\index{manifold $\Sol$} is a simply connected homogeneous three-manifold diffeomorphic to $\R^3$ and as such a metric Lie group. We describe a model for this space and some properties.

\subsection{Model of \texorpdfstring{$\Sol$}{Sol3} and left translations}
We endow $\R^3$ with the Riemannian metric
\begin{equation}\langle\cdot,\cdot\rangle_{(x,y,z)}=e^{2z}dx^2+e^{-2z}dy^2+dz^2,\label{metric}\end{equation}
and set $\Sol:=\left(\R^3,\langle\cdot,\cdot\rangle\right)$. The multiplication
\begin{equation}(x_1,y_1,z_1)\ast(x_2,y_2,z_2):=\left(x_1+e^{-z_1}x_2,y_1+e^{z_1}y_2,z_1+z_2\right)\label{groupstructure}\end{equation}
turns $\Sol$ into a metric Lie group, i.e., for $a\in\Sol$ the left multiplication\index{left multiplication}\index{left-translation}
\[\Lcal_p\colon\Sol\to\Sol,\qquad \Lcal_p(g):=p\ast g\]
is an isometry of $\Sol$. This is the same model used in \cite{DanielMira}.

Via $q\colon \Sol\to\R,\,(x,y,z)\mapsto z$, we consider $\Sol$ also as a Riemannian fibration  with $\R^2$-fibres over the $z$-axis. This fibration decomposes $T_p\Sol$ orthogonally into the two-dimensional \emph{vertical space} $V_p=\operatorname{ker}(dq_p)$ and into the one-dimensional \emph{horizontal space} $H_p=\left(\operatorname{ker}(dq_p)\right)^\perp$. Note that this terminology is contrary to the standard notion in the model space $\R^3$ underlying $\Sol$.



Since $\Sol$ is a metric Lie group, \emph{left trans\-lation along the base} yields a one-parameter family of isometries $(\Phi_s)_{s\in\R}$,
\begin{equation}\Phi_s\colon\Sol\to\Sol\mbox{, }\Phi_s(x,y,z):=\Lcal_{(0,0,s)}(x,y,z)=(e^{-s}x,e^{s}y,z+s)\mbox{, } s\in\R.\label{htranslation}
\end{equation}
For $(x,y,z)\in\Sol$ with $(x,y)\neq(0,0)$ the orbit $\{(e^{-s}x,e^sy,0)\colon s\in\R\}$ traces out a hyperbola with $x$-axis and $y$-axis as asymptotes, and so $s\mapsto (e^{-s}x,e^sy,0)$ is called a hyperbolic rotation.

A two-parameter family of isometries $(\Psi_{a,b})_{a,b\in\R}$, \emph{left translations in the $\R^2$-fibres of $\Sol$}\index{translation in $\R^2$-fibre of $\Sol$}, is defined as $\Psi_{a,b}:=\Lcal_{(a,b,0)}\colon\Sol\to\Sol$, where
\begin{equation}
\Psi_{a,b}(x,y,z)=\Lcal_{(a,b,0)}(x,y,z)=\left(x+a,y+b,z\right),\qquad a,b\in\R.\label{vtranslation}
\end{equation}
In view of the fibration $\Sol\to\R$, we call $\Phi_s$ \emph{horizontal translation} and $\Psi_{a,b}$ \emph{vertical translation}. We note that horizontal and vertical translations do not commute. 

%

\subsection{Canonical frame and Riemannian connection}\label{canframe}At the origin let $(\partial_x,\partial_y,\partial_z)$ be the standard Euclidean frame. A left translation from the origin to $p=(x,y,z)$ gives the orthonormal frame
\begin{equation}E_1=e^{-z}\partial_x,\qquad E_2=e^{z}\partial_y,\qquad E_3=\partial_z.\label{onf}\end{equation}
The Riemannian connection with respect to this frame has the following representation:
\begin{equation}\begin{array}{lll}\nabla_{E_1}E_1=-E_3,&\nabla_{E_1}E_2=0,&\nabla_{E_1}E_3=E_1,\\
\nabla_{E_2}E_1=0,& \nabla_{E_2}E_2=E_3,&\nabla_{E_2}E_3=-E_2,\\
\nabla_{E_3}E_1=0,&\nabla_{E_3}E_2=0,& \nabla_{E_3}E_3=0.
\end{array} \label{connection}\end{equation}

\subsection{Special geodesics and induced isometries}We will make use of specific isometries, which we define first in general:
\begin{Def}
	Let $M$ be a Riemannian manifold with $\dim(M)\geq3$ and let $M_0$ be a submanifold of $M$ with  $\dim(M_0)=1$ or $\dim(M_0)=\dim(M)-1$. An isometry $\varphi\colon M\to M$ is said to be an \emph{involution} in $M_0$ if for every geodesic $\gamma$ intersecting $M_0$ perpendicularly at $\gamma(0)$ we have $\varphi(\gamma(t))=\gamma(-t)$.
\end{Def}
A fixed point set of an isometry is totally geodesic, so that in case $\dim(M_0)=1$ we can assume that $M_0$ is a geodesic $\Gamma$, and we refer to the inversion also as \emph{half-turn rotation about $\Gamma$} or \emph{rotation of angle $\pi$ about $\Gamma$}. For $\dim(M_0)=\dim(M)-1$ we call the inversion \emph{reflection through $M_0$}.

We consider the curves $c,c_\pm\colon\R\to\Sol$,\index{geodesics $\Sol$}
\begin{equation}
c(s)=(0,0,s)\qquad\text{and}\qquad
c_\pm(s)=(s,\pm s,0).\label{solgeodesics}
\end{equation}
In view of \eqref{onf} and \eqref{connection} they are constant speed geodesics. Moreover, $c$ is a horizontal geodesic and $c_\pm$ are orthogonal vertical geodesics. Using \eqref{htranslation} and \eqref{vtranslation}, they are orbits of horizontal and vertical translations due to
\[c(s)=\Phi_s(0,0,0)\qquad\mbox{and}\qquad c_\pm(s)=\Psi_{s,\pm s}(0,0,0),\qquad s\in\R.
\]
We introduce the notation
\begin{equation}\Psi_s^\pm:=\Psi_{s,\pm s}\overset{\eqref{vtranslation}}{=}\Lcal_{c_\pm(s)}\label{cpmtranslation},\end{equation}
and refer to it also as \emph{left translation along $c_\pm$}. Likewise, we call $\Phi_s=\Lcal_{(0,0,s)}=\Lcal_{c(s)}$ \emph{left translation along $c$}.



The reason why $c$ and $c_\pm$ are of particular interest is the following one. The stabiliser \begin{equation}\Stab_p:=\{\varphi\in\Iso(\Sol)\colon \varphi(p)=p\}\label{stab}\end{equation}
is known for all $p\in\Sol$. Due to homogeneity of $\Sol$, we concentrate on $p=(0,0,0)$. Then $\Stab_{(0,0,0)}$ is generated by the orientation-reversing isometries $\varphi_1,\varphi_2\in\Iso(\Sol)$,
\begin{equation}
\varphi_1(x,y,y)=(y,-x,-z)\qquad\mbox{and}\qquad \varphi_2(x,y,z)=(-x,y,z).\label{stabgen}
\end{equation}
From the form of the metric \eqref{metric} one can verify that $\varphi_1$ and $\varphi_2$ are isometries. As pointed out in \cite[Section 2.1]{DanielMira}, $\varphi_1$ has order $4$, $\varphi_2$ has order $2$ and \[\Stab_{(0,0,0)}=\{\varphi_2^m\circ\varphi_1^k\colon m=0,1\mbox{ and }k=0,1,2,3\}\] is isomorphic to the dihedral group $D_4$ with eight elements. The fixed point sets of these eight elements are either the point $\{(0,0,0)\}$, the geodesics $\{c(s)\colon s\in\R\}$ and $\{c_\pm(s)\colon s\in\R\}$, the $(y,z)$-plane $\{x=0\}$, or the $(x,z)$-plane $\{y=0\}$. Thus $c$ and $c_\pm$ are the only geodesics through $(0,0,0)$ admitting half-turn rotations, which, respectively, are given by $\rho,\rho_\pm\colon\Sol\to\Sol$,
\begin{align}
\rho(x,y,z)&=\varphi_1^2(x,y,z)=(-x,-y,z),\nonumber\\
\rho_+(x,y,z)&=(\varphi_2\circ\varphi_1^3)(x,y,z)=(y,x,-z),\label{solrotations}\\
\rho_-(x,y,z)&=(\varphi_2\circ\varphi_1)(x,y,z)=(-y,-x,-z).\nonumber
\end{align}
The reflections $\sigma_{1},\sigma_{2}\colon\Sol\to\Sol$ through $\{x=0\}$ and $\{y=0\}$ are
\begin{equation}
\sigma_1(x,y,z)=\varphi_2(x,y,z)=(-x,y,z)\quad\mbox{and}\quad\sigma_2(x,y,z)=(\varphi_2\circ\varphi_1^2)\overset{\eqref{solrotations}}{=}(x,-y,z).\label{solreflections}
\end{equation}
Left translation from $(0,0,0)$ to $p\in\Sol$ is an isometry, so that $\Stab_p$ is generated by $\Lcal_p\circ\varphi_1\circ\Lcal_p^{-1}$ and $\Lcal_p\circ\varphi_2\circ\Lcal_p^{-1}$. This implies that all $(x,z)$-planes and all $(y,z)$-planes are totally geodesic.



%

\subsection{Killing fields}Left translation along each of the coordinate axes defines a one-para\-meter group of isometries, generated by the following three Killing fields:
\begin{equation}
K_1=e^{z}E_1=\partial_x,\qquad K_2=e^{-z}E_2=\partial_y,\qquad
K_3=-xK_1+yK_2+E_3.\label{killing}
\end{equation}
\index{translation along $c$ in $\Sol$}
This is useful to describe Killing graphs defined on $\{y=0\}$. We will use this later.
\subsection{Constant mean curvature spheres}The existence, uniqueness and embeddedness of \mcH-spheres in $\Sol$ for values $H>1/\sqrt{3}$ has been settled by \cite{DanielMira}. Daniel and Mira introduced a novel way of dealing with \mcH-surfaces in $\Sol$. In combination with \cite{MeeksSphere}, their results also hold for $H>0$. We will need some specific properties of \mcH-spheres. For that purpose we recall the following definition:

\begin{Def}Let $X$ be a metric Lie group with identity element $e$ and left multiplication $\ell_x(y)=xy$. Let $f\colon\Sigma\to X$ be the immersion of a surface $\Sigma$ with unit normal vector field $N\colon\Sigma\to TX$ into the tangent bundle of $X$. The \emph{left invariant Gauss map of $f$} is the map $G\colon \Sigma\to\mathbb{S}^2=T_eX$, that assigns to each $p\in\Sigma$ the unit tangent vector $G(p)$ at the identity element $e$ given by $(d\ell_{f(p)})_e(G(p))=N_p$.
\end{Def}
We refer to \cite[Theorem 1.1]{MeeksSphere} for the following properties of \mcH-spheres $S_H$ in $\Sol$:
\begin{enumerate}
	\item For any $H>0$ there exists an embedded \mcH-sphere $S_H$ in $\Sol$. It is unique up to an ambient isometry.
	
	\item The left invariant Gauss map of $S_H$ is a diffeomorphism.
	
	\item There exists $p\in\Sol$ such that the ambient isometry group of $S_H$ is given by $\Stab_p$. The point $p$ is the centre of $S_H$ and we also write $S_H=S_H(p)$.
\end{enumerate}

Due to compactness, any \mcH-sphere $S_H(p)$ in $\Sol$ has a minimal and maximal value with respect to the coordinate axes. In order to simplify arguments used in Sections 2 and 3, we show that these values are attained on the respective axes through the centre of $S_H(p)$:

\begin{Prop}\label{solsphere}Let $H>0$ and $S_H$ be a sphere of constant mean curvature $H$
	in $\Sol$ with centre $p=(x_0,y_0,z_0)$. Then $\{x=x_0\}$ and $\{y=y_0\}$ are
	planes Alexandrov symmetry of $S_H$, and $S_H$ is a bi-graph with respect to each
	plane. The minimal and maximal values of the $x$, $y$ and
	$z$ coordinates arise on the curves $s\mapsto (x_0+s,y_0,z_0)$, $s\mapsto (x_0,y_0+s,z_0)$ and $s\mapsto (x_0,y_0,z_0+s)$ through $p=(x_0,y_0,z_0)$, respectively.
\end{Prop}

\begin{proof}The claim about the Alexandrov symmetry planes follows from \cite[Propostion 5.4.]{DanielMira} and item 3. above.
	
	In order to prove the last claim, we use the Transversality Lemma \cite[Lemma 3.1]{MeeksTransversality}. Since the left invariant Gauss map of $S_H(p)$ is a diffeomorphism, we only need to exhibit suitable two-dimensional subgroups $\Sigma$ of $\Sol$. In view of \eqref{groupstructure}, it is easy to check that
	\[\Sigma_1=\{x=0\},\qquad \Sigma_2=\{y=0\}\qquad\mbox{and}\qquad\Sigma_3=\{z=0\}\]
	are two-dimensional subgroups of $\Sol$. 
	
	Let us give the details for the $x$ coordinate. The quotient space \[\Sol/\Sigma_1=\{g\Sigma_1\colon g\in\Sol\}\] consisting of the left cosets of $\Sigma_1$ is the foliation of $\Sol$ by $(y,z)$-planes, which are orthogonal to $s\mapsto (x_0+s,y_0,z_0)$. By the Transversality Lemma, the set of left cosets intersecting $S_H(p)$ can be parametrised by $[0,1]$, that is, we have \[S_H(p)\cap \Sol/\Sigma_1=\{g(t)\Sigma_1\colon t\in[0,1]\},\] and each of the left cosets $g(0)\Sigma_1$ and $g(1)\Sigma_1$ intersects $S_H(p)$ in a single point $p(0)$, respectively $p(1)$. We claim that $p(0)$ and $p(1)$ lie on $s\mapsto(x_0+s,y_0,z_0)$. We note that reflection through $\{y=y_0\}$ leaves $S_H(p)$ and each left coset $g\Sigma_1$ invariant (since the latter are $(y,z)$-planes). If $p(0)$ and $p(1)$ were on $s\mapsto(x_0+s,\widetilde{y_0},\widetilde{z_0})$ with $\widetilde{y_0}\neq y_0$ or $\widetilde{z_0}\neq z_0$, then reflection through $\{y=y_0\}$ would result in a second point intersecting $g(0)\Sigma_1$ and $g(1)\Sigma_1$, which contradicts the Transversality Lemma. The minimal and maximal values of the $x$ coordinate are the corresponding coordinates of $p(0)$ and $p(1)$.
	
	For the $y$ coordinate, the proof is analogous to the one for the $x$ coordinate. For the $z$ coordinate, instead of a reflection through a plane, one argues using the half-turn rotation $\rho_{x_0,y_0}$ about $s\mapsto(x_0,y_0,z_0+s)$. Both $S_H(p)$ and the foliation by $(x,y)$-planes, which is the quotient space $\Sol/\Sigma_3$ in this case, are invariant under the half-turn rotation $\rho_{x_0,y_0}$.
\end{proof}


\section{Horizontal \texorpdfstring{\mcH}{mcH}-cylinders in \texorpdfstring{$\Sol$}{Sol}}\label{chapter12}
In this section we study constant mean curvature surfaces invariant under left translation along the base $c$ of $\Sol$. First we describe properties of the differential equation for constant mean curvature surfaces invariant by $(\Phi_s)_{s\in\R}$, as defined in \eqref{htranslation}. These are natural implications by the geometry of $\Sol$. Then we discuss the solution of this ODE geometrically. We use the maximum principle to derive properties which let us extend the respective graph by reflections to an embedded closed solution curve. Since $c$ is a horizontal geodesic, we call the surfaces \emph{horizontal \mcH-cylinders}. We also discuss further solutions which we obtained computationally.

\subsection{ODE for surfaces invariant under left translations along \texorpdfstring{$c$}{c}}The foliation by $(x,y)$-planes of $\Sol$ is invariant under left translation along $c$. Therefore it is sufficient to consider a curve in the fibre \[S_0:=\{(x,y,z)\in\Sol\colon z=0\}\]
as generating curve of a surface invariant by left translation along $c$.

Explicitly, for $\mathcal{C}^2$-functions $x\colon J\to\R$ and $y\colon J\to\R$, defined on an open interval $J\subset\R$, the curve
\[\gamma\colon J\to\Sol,\qquad \gamma(t):=(x(t),y(t),0)\]
is in the fibre $S_0$ and the invariant surface generated by left translation of $\gamma$ along $c$ is parametrised by
\begin{equation}f\colon \R\times J\to\Sol,\qquad f(s,t):=\Phi_s(\gamma(t))=\left(e^{-s}x(t),e^sy(t),s\right),\label{surface}\end{equation}\index{invariant surface in $\Sol$}
where $\Phi_s$ is as in \eqref{htranslation}. The mean curvature $H$ of $f$ is independent of the parameter $s$, that is, we have $H=H(t)$. Requiring $H$ to be constant leads to an ordinary differential equation for the curve $\gamma$. 
Such surfaces were studied in \cite{Lopez14}, too. For $H=0$ some initial value problems have explicit solutions or allow for qualitative discussions involving first integrals. For $H>0$, however, the mean curvature equation appears too complicated for these approaches.

We will consider curves $\gamma(t)=(t,h(t),0)$, which are Killing graphs with respect to the Killing field $K_2=\partial_y$ from \eqref{killing}. For such graphs the ODE can be described as follows:
\begin{Prop}\label{odesol2}Let $H\in\R$.
	\begin{enumerate}[(a)]
		\item There is a smooth function $F\colon\R^3\to\R$ such that the invariant surface
		\[
		f\colon\R\times J\to\Sol,\qquad f(s,t):=\Phi_s(t,h(t),0),\qquad\text{where }h\in\mathcal{C}^2(J,\R),
		\]
		has constant mean curvature $H$ with respect to the inner normal if and only if 
		\begin{equation}h^{\prime\prime}(t)=F(t,h(t),h^\prime(t))\qquad \text{for all t }\in J.\label{odeprop}\end{equation}
		
		\item The invariant surface $\tilde{f}\colon\R\times J\to\Sol\mbox{, }\tilde{f}(s,t):=\Phi_{-s}(h(t),t,0)$ has constant mean curvature $H$ if and only if $h\in\mathcal{C}^2(J,\R)$ satisfies \eqref{odeprop}, i.e., $x$-graphs and $y$-graphs as generating curves of invariant surfaces with constant mean curvature $H$ satisfy the same ODE.
	\end{enumerate}\end{Prop}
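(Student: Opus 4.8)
The plan is to establish part~(a) by an explicit but structurally transparent computation of the mean curvature of $f$ in the canonical orthonormal frame, and then to deduce part~(b) from part~(a) using the rotation of angle $\pi$ about $c_+$, which interchanges $x$-graphs and $y$-graphs.

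For part~(a) I would first write $f(s,t)=(e^{-s}t,e^{s}h(t),s)$ and express the coordinate tangent fields in the canonical frame at the point $f(s,t)$, whose $z$-coordinate is $s$; from $\partial_x=e^{s}E_1$, $\partial_y=e^{-s}E_2$, $\partial_z=E_3$ one obtains
\[
f_s=-tE_1+h(t)E_2+E_3,\qquad f_t=E_1+h'(t)E_2 .
\]
Crucially the right-hand sides no longer depend on $s$, which re-proves that the induced metric, the unit normal and the mean curvature are functions of $t$ alone. The first fundamental form then has determinant $W^2$ with $W=\bigl(1+(h')^2+(th'+h)^2\bigr)^{1/2}$, and the upper unit normal --- the unique unit normal with positive $E_2$-component --- is $N=W^{-1}\bigl(-h'E_1+E_2-(th'+h)E_3\bigr)$. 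Next I would compute $\nabla_{f_s}f_s$, $\nabla_{f_s}f_t$ and $\nabla_{f_t}f_t$ directly from the table of covariant derivatives $\nabla_{E_i}E_j$ listed above (the $s$-derivatives drop out, since the coefficients of $f_s$ and $f_t$ depend on $t$ only) and pair these with $N$ to read off the second fundamental form.

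The decisive structural point is that, among the three second fundamental form coefficients, only the one attached to $\partial_t\partial_t$ involves $h''$, and it does so affinely: it equals $W^{-1}\bigl(h''+p(t,h,h')\bigr)$ for a polynomial $p$. Substituting this into the mean curvature equation of $f$ and clearing denominators turns the condition $H\equiv\text{const}$ into $\bigl(1+t^2+h^2\bigr)h''=2HW^{3}-q(t,h,h')$, where $1+t^2+h^2$ is the $\partial_s\partial_s$-coefficient of the first fundamental form, $q$ is a polynomial in $t,h,h'$, and $W^{3}=\bigl(1+(h')^2+(th'+h)^2\bigr)^{3/2}$ is smooth because its argument is $\ge1$. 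Since the coefficient $1+t^2+h^2$ never vanishes, I set $F(t,h,h'):=\bigl(2HW^{3}-q(t,h,h')\bigr)/(1+t^2+h^2)$, a smooth function on $\R^3$; retracing the algebra shows that $f$ has constant mean curvature $H$ with respect to the upper normal if and only if $h''=F(t,h,h')$. (The opposite orientation of $N$ merely replaces $H$ by $-H$.)

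For part~(b) I would invoke the isometry $\Psi_+\colon(x,y,z)\mapsto(y,x,-z)$ of $\Sol$ recorded above --- the rotation of angle $\pi$ about $c_+$ --- and check by direct substitution that
\[
\Psi_+\bigl(\Phi_{-s}(h(t),t,0)\bigr)=\Phi_s(t,h(t),0),
\]
that is, $\tilde f=\Psi_+\circ f$ (note $\Psi_+$ is an involution). An isometry preserves the mean curvature at corresponding points once the unit normal is transported, so the mean curvature of $\tilde f$ at $\tilde f(s,t)$ equals that of $f$ at $f(s,t)$; hence $\tilde f$ has constant mean curvature $H$ precisely when $f$ does, i.e.\ by part~(a) precisely when $h''=F(t,h,h')$, with the very same $F$ (the relevant normal of $\tilde f$ being the image under $d\Psi_+$ of the upper normal of $f$, which is the unit normal of $\tilde f$ with positive $E_1$-component, the natural choice for a $y$-graph). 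The only real work sits in part~(a): organizing the frame computation of the second fundamental form and tracking signs and the orientation of $N$. The feature that makes the proposition true --- that the mean curvature equation is affine in $h''$ with the harmless, nowhere-vanishing coefficient $1+t^2+h^2$ --- becomes visible as soon as $\nabla_{f_t}f_t$ is written in the canonical frame, so I expect no genuine obstacle, and part~(b) then comes essentially for free.
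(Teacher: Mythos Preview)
Your proposal is correct and follows essentially the same approach as the paper: for (a) you isolate $h''$ by observing that only $b_{22}$ contains it, and it does so affinely with the nonvanishing coefficient $g_{11}=1+t^2+h^2$ (the paper phrases this as $\langle N,E_2\rangle>0$ and $g_{11}>0$), and for (b) you use the involutive isometry $\psi_+$ to identify $\tilde f$ with $\psi_+\circ f$, exactly as the paper does. The only difference is presentational: you carry out the frame computation more explicitly (writing down $W$, $N$, and the form of the resulting $F$), whereas the paper stays at the structural level and never writes these out.
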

	
	\begin{proof}\begin{enumerate}[(a)]
			\item Let $v_1:=\partial_sf$ and $v_2:=\partial_tf$. We denote the inner normal to $f$ by $N$, so that $g_{ij}:=\langle v_i,v_j\rangle$ and $b_{ij}:=\langle \nabla_{v_i}v_j,N\rangle$ for $i,j\in\{1,2\}$ are the coefficients of the first and second fundamental form. Then the mean curvature of $f$ is given by
			\[
			H=\frac{b_{11}g^{11}+2b_{12}g^{12}+b_{22}g^{22}}{2}.
			\]
			We have
			\[
			v_2=E_1+h^\prime E_2\qquad\text{and}\qquad \nabla_{v_2}v_2=\underbrace{\nabla_{v_2}E_1+h^\prime\nabla_{v_2}E_2}_{=:w}+h^{\prime\prime}E_2
			\]
			Here we note that $H$ depends on $t$, $h(t)$, $h^\prime(t)$ and $h^{\prime\prime}(t)$.
			
			We assume $H$ to be constant and therefore get an implicit differential equation depending on $h^\prime$ and $h^{\prime\prime}$. Now we want to show that we can solve this implicit equation for $h^{\prime\prime}$.
			
			Obviously $w$ is independent of $h^{\prime\prime}$ and the only term containing $h^{\prime\prime}$ is
			\[
			\frac{b_{22}g^{22}}{2}=\frac{b_{22}g_{11}}{2\det(g)}= \frac{\langle \nabla_{v_2}v_2,N\rangle g_{11}}{2\det(g)}=\frac{\left(h^{\prime\prime}\langle N,E_2\rangle+\langle w,N\rangle\right)g_{11}}{2\det(g)}.
			\]
			The invariant surface $f$ is a Killing graph with respect to the Killing field $K_2=e^{-s}E_2$, see \eqref{killing}, so that $\langle N,E_2\rangle$ is positive, because $N$ is chosen as inner normal. We also have $g_{11}>0$ because the Killing field generated by left translation along $c$ is non-trivial. Therefore we can solve the implicit equation for $h^{\prime\prime}$ and get a function ${F\colon\R^3\to\R}$ with $h^{\prime\prime}(t)=F(t,h(t),h^\prime(t))$. The function $F$ is smooth because each $\Phi_s$ is smooth and so are $g$ and $b$. It is defined on all of $\R^3$ because we can prescribe any kind of function $h\colon J\to\R$.
			
			\item The equation $\Phi_{-s}\circ\rho_+=\rho_+\circ\Phi_s$, verified using \eqref{htranslation} and \eqref{solrotations}, implies $\tilde{f}=\rho_+\circ f$, i.e. $\tilde{f}$ and $f$ are isometric. Thus the claim about the ODE follows from (a).\qedhere
		\end{enumerate}
	\end{proof}
	
	\subsection{Half-cylinder solution and its extension to \texorpdfstring{\mcH}{mcH}-cylinders with axis \texorpdfstring{$c$}{c}}\label{chapter122}We consider the ODE for surfaces invariant by left translation along the base $c$ first. We can apply the Picard-Lindel\"of Theorem to \eqref{odeprop} because $F$ is smooth. We obtain a maximal solution $h$. For constant mean curvature $H>0$ the maximum principle yields some general properties by comparing the surface $f$ with \mcH-spheres:
	
	\begin{Prop}\label{lemode}Let $a\in\R$ and $H>0$. Then there exists a unique maximal solution $h\colon I_{\max}\to\R$ with $h(0)=a$ and $h^\prime(0)=0$ satisfying \eqref{odeprop}. It has the following properties:
		\begin{enumerate}[(a)]
			\item \emph{[Symmetry]:} There is $R=R(a)\in(0,\infty]$ such that we have $I_{\max}=(-R,R)$ and $h(t)=h(-t)$ for all $t\in(-R,R)$.
			
			
			\item \emph{[Bounded existence interval]:} We have $R(a)<\infty$.
			
			\item \emph{[Bounded graph]:} There is $K=K(a)>0$ such that we have $\lvert h(t)\rvert\leq K$ for all $t\in[-R,R]$ where $h(\pm R):=\lim_{t\to\pm R}h(t)$.\end{enumerate}
		

		
	\end{Prop}
	
	\begin{proof}Let $h\colon I_{\max}\to\R$ be the unique maximal solution of $h^{\prime\prime}(t)=F(t,h(t),h^\prime(t))$ with $h(0)=a$ and $h^\prime(0)=0$. We recall that
		\[f(s,t)=\Phi_s(t,h(t),0)=(e^{-s}t,e^sh(t),s)\]
		is the surface invariant under the left translations $(\Phi_s)_{s\in\R}$ and generated by $(t,h(t),0)$. We denote its image by $\Sigma$. We will use frequently that left translations along the $x$-axis or $y$-axis are isometries of $\Sol$; these translations are $\Psi_{u,0}$ and $\Psi_{0,v}$, as defined in \eqref{vtranslation}. From \autoref{solsphere} we only use that \mcH-spheres have a centre, are embedded and compact.
		
		\begin{figure}[ht]
			\begin{center}			
				\includegraphics{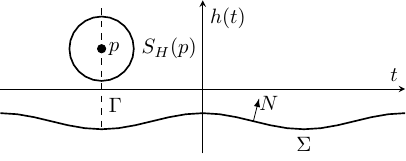}
			\end{center}
			\caption{\autoref{lemode} (b): Comparison argument indicating that the case $R=\infty$ is impossible.}
			\label{alemma}
		\end{figure}
		
		(a): Looking at \eqref{htranslation} and \eqref{solreflections}, it is easy to check $\sigma_{1}\circ\Phi_s=\Phi_s\circ\sigma_{1}$, so that a reflection of the solution through $\{x=0\}$ gives another solution of the same ODE. The initial values $h(0)=a$ and $h^\prime(0)=0$ are invariant under $\sigma_{1}$, i.e., we obtain the same solution. This proves $I_{\max}=(-R,R)$ for some $R\in(0,\infty]$ and $h(-t)=h(t)$ for all $t\in(-R,R)$.
		
		(b): Assume $R=\infty$ and let $\Pi_y\colon\Sol\to\R^2$ be defined by $\Pi_y(x,y,z):=(x,z)$.
		
		Due to our assumption on $R$ and the compactness of \mcH-spheres in $\Sol$, there exists $p\in\Sol$ such that for the \mcH-sphere $S_H(p)$ centred at $p$ we have 
		
		\begin{enumerate}[1.]\item $\Pi_y(S_H(p))\subset\Pi_y(\Sigma)$ and
			\item $S_H(p)$ is contained in the mean convex side of $\Sigma$.
		\end{enumerate}
		
		Since \mcH-spheres are unique up to an ambient isometry, moving spheres by left translation along some $y$-axis $\Gamma$ through $p$ inside the mean convex side of $\Sigma$ towards the surface leads to a first tangential contact point of in the interior of $\Sigma$. Because of $R=\infty$ the surface has no boundary. The maximum principle then shows $\Sigma=S_H$, which is a contradiction. See \autoref{alemma} above.
		
		(c): Suppose this were false. As a solution of an ODE, $h$ can only be unbounded at the boundary of $I_{\max}=(-R,R)$. The solution is symmetric, so that we only consider the cases $\lim_{t\to R} h(t)=\pm\infty$. Both cases are ruled out by moving \mcH-spheres along an $x$-axis towards $f(\R\times(0,R))$; compare with \autoref{blemma} above.	
		\begin{figure}
			\begin{center}			
				\includegraphics{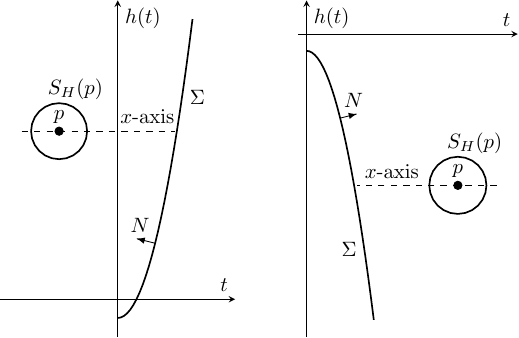}
			\end{center}
			\caption{Geometry for \autoref{lemode} (c): On the left side we see the case $\lim_{t\to R}h(t)=+\infty$ and on the right side $\lim_{t\to R}h(t)=-\infty$.}
			\label{blemma}
		\end{figure}
	\end{proof}

	We are interested in a particular solution of the ODE, see \autoref{zeroheightillustration}:
	\begin{Lem}[Height zero half-cylinder solution]\label{zeroheight}There is $a_0<0$ such that the maximal solution with $h(0)=a_0$ and $h^\prime(0)=0$ has the following properties:
		\begin{enumerate}[(a)]
			\item \emph{[Height zero and existence interval]:} We have $h(\pm R(a_0))=0$ and $R(a_0)=-a_0$.
			\item \emph{[Below height zero]:} For all $t\in(-R(a_0),R(a_0))$ we have $h(t)<0$.
			\item \emph{[Asymptotic behaviour]:} The solution satisfies $\lim_{t\to \pm R}h^{\prime}(t)=\pm\infty$.
		\end{enumerate}
		
	\end{Lem}
	\begin{figure}
		\begin{center}
			\includegraphics{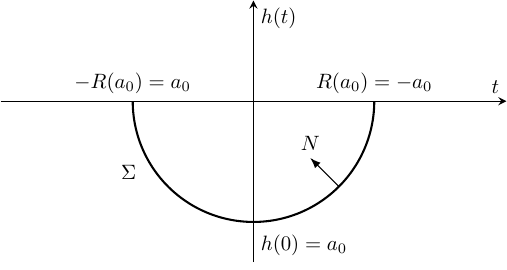}
		\end{center}
		\caption{Illustration of \autoref{zeroheight}}
		\label{zeroheightillustration}
	\end{figure}
	\begin{proof}(a): The function $\varphi\colon\R\to\R\mbox{, }\varphi(a):=h(R(a))$ is continuous. For $a=0$ we claim $\varphi(0)>0$. We argue by contradiction and suppose $\varphi(0)\leq0$. Then the boundary of $\Sigma$ is contained in $\{y\leq0\}$ because of
		\[f(s,R(0))=\Phi_s(R(0),h(R(0)),0)=\Phi_s(R(0),\varphi(0),0)=(e^{-s}R(0),e^s\varphi(0),s).\]
		Since $a=0$, this means we can move an \mcH-sphere $S_H$ centred at $(0,v,0)$ for some $v>0$ along $[0,\infty)\to\Sol\mbox{, }s\mapsto(0,v-s,0)$ towards $\Sigma$ without touching the boundary of it, i.e.,  we get a point of tangential contact. This is because the boundary of $\Sigma$ is contained in $\{y\leq 0\}$ and \mcH-spheres attain its minimal value on the $y$-axis through the centre of the sphere (see \autoref{solsphere}). The maximum principle shows $\Sigma=S_H$, a contradiction. This proves $\varphi(0)>0$. See \autoref{zeroheighti}.
		
		\begin{figure}[ht]
			\begin{center}
				\includegraphics{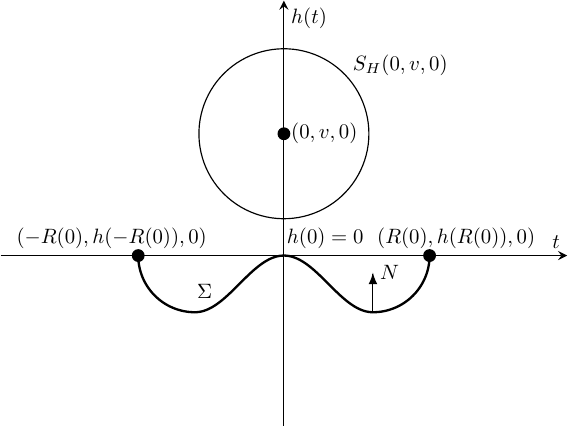}
			\end{center}
			\caption{\autoref{zeroheight} (a): Illustrated is the comparison argument to rule out $h(R(0))\leq0$.}
			\label{zeroheighti}
		\end{figure}

		Next we show there is $a<0$ with $\varphi(a)<0$. If we had $\varphi(a)\geq0$ for all $a\leq0$, then we could find $\tilde{a}<0$ such that it were possible to move an \mcH-sphere along an $x$-axis to the surface $f|_{\R\times(0,R(\tilde{a}))}$ without touching its boundary, a contradiction. So there is some $a<0$ with $\varphi(a)<0$, and by the intermediate value theorem there is $a_0<0$ with $\varphi(a_0)=0$.
		
		To show $R(a_0)=-a_0$, let us consider $\tilde{f}:=\rho_+\circ f$. Then \autoref{odesol2} (b) implies that $h$ is also a height zero solution to the initial values $h(0)=-R(a_0)$ and $h^\prime(0)=0$ with $I_{\max}=(a_0,-a_0)$. This can only hold for $R(a_0)=-a_0$.
		
		(b): Assume this were false. Then the solution $h$ would attain its maximum at some $t_0\in(-R(a_0),R(a_0))$ with $h(t_0)\geq0$. Therefore we can move an \mcH-sphere centred at $(t_0,v,0)$ for some $v>0$ along $[0,\infty)\to\Sol\mbox{, }s\mapsto(t_0,v-s,0)$ towards $\Sigma$ without touching its boundary. As above, this follows from $\partial\Sigma\subseteq\{y=0\}$ and \mcH-spheres attaining their minimal value on the $y$-axis through the centre of the sphere (see \autoref{solsphere}). We thus obtain a point of tangential contact with $\Sigma$, which implies $\Sigma=S_H$, a contradiction.
		
		(c): Let $\tilde{F}\colon\R^3\to\R^2\mbox{, }\tilde{F}(\tau,\xi,\eta):=(\eta,F(\tau,\xi,\eta))$, so that the maximal solution of $u^\prime(t)=\tilde{F}(t,u(t))$ with $u(0)=(a_0,0)$ is given by $v(t):=(h(t),h^\prime(t))$.
		
		We know the phase space of $\tilde{F}$ is $\R^3$. General ODE theory implies that
		\[I_{\max}\to\R^2,\qquad t\mapsto (t, v(t))= (t,h(t),h^\prime(t))\]
		leaves every compact subset in $\R^3$, in particular \[[-R(a_0),R(a_0)]\times[-K(a_0),K(a_0)]\times[-C,C]\] for every $C>0$, where $K(a_0)$ is as defined in \autoref{lemode} (b). These properties imply $\lim_{t\to \pm R}\left\lvert h^\prime(t)\right\rvert=\infty$. The sign $\lim_{t\to\pm R}h(t)=\pm\infty$ follows from (b).\end{proof}		
	\begin{Rem}
		The arguments in the proof where we use \autoref{solsphere} work also without knowing exactly where the minimal or maximal values are attained at. However, one then has to be more careful when moving \mcH-spheres towards the invariant surface $\Sigma$. It is then necessary to make use of left translations along $x$-axes and use the explicit form \eqref{vtranslation}.
	\end{Rem}

	We use one height zero solution to obtain a smooth embedded closed curve $\gamma$ generating a left invariant cylinder $f$ with constant mean curvature $H>0$ and get our first main result:
	
	\begin{Theorem}\label{thmcyl}Consider the fibration $\Sol\to\R\mbox{, }(x,y,z)\mapsto z$. Then for each $H>0$ there is a smooth embedded simple closed curve $\gamma$ in a $\R^2$-fibre which generates a $(\Phi_s)_{s\in\R}$-invariant embedded surface $f(s,t)=\Phi_s(\gamma(t))$ in $\Sol$ with constant mean curvature $H$. The surface is invariant by $\Stab_{(0,0,0)}$, a dihedral group of order $8$ generated by $\{\sigma_1,\sigma_2,\rho_\pm\}$.
	\end{Theorem}
	For the notation, recall that $(\Phi_s)_{s\in\R}$ denotes the group of left translations along the base $c$ in the model of $\Sol$ described in \autoref{chapter11}; see \eqref{stab}, \eqref{solrotations} and \eqref{solreflections} for the definitions of $\Stab_p$, $\rho_\pm$ and $\sigma_1$ or $\sigma_2$, respectively.
	In the following we refer to these surfaces as \emph{\mcH-cylinders with axis $c$ in $\Sol$} or, since $c$ is a horizontal geodesic, as \emph{horizontal \mcH-cylinders in $\Sol$}.\index{cylinder in $\Sol$} We have computed an example, see \autoref{imodegraph}, and details of the computation are explained in \autoref{cylindercomputed}.
	\begin{figure}
		\begin{center}
			\includegraphics[width=1\linewidth]{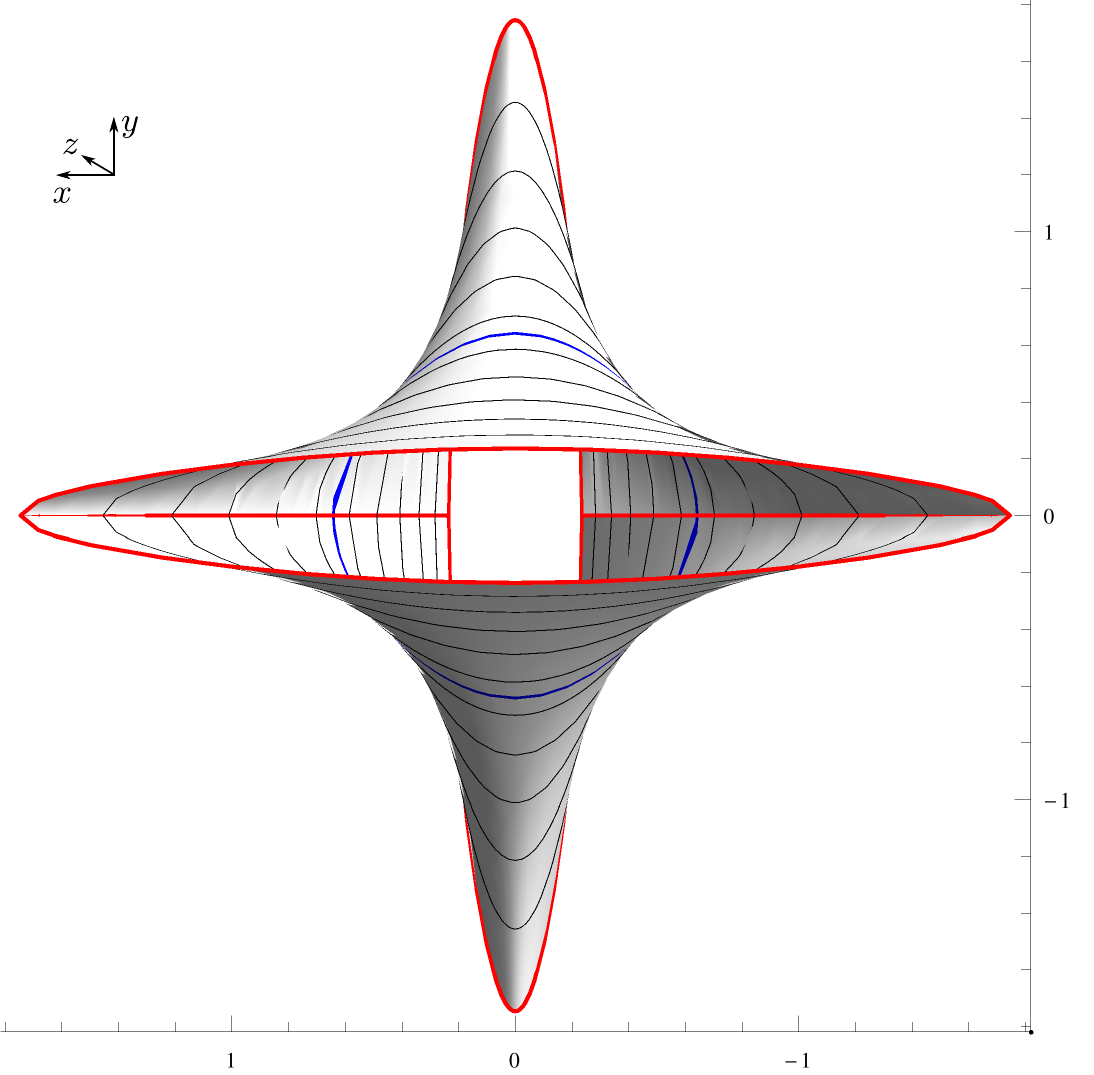}
		\end{center}
		%
		\caption{Computed example of an \mcone-cylinder in $\Sol$ established in \autoref{thmcyl}. All level lines shown are the intersection of the cylinder with a $\R^2$-fibre of the fibration $\Sol\to\R\mbox{, }(x,y,z)\mapsto z$. The level lines are isometric. The curve in blue is contained in the fibre $S_0=\{z=0\}$ and generates the \mcone-cylinder.}
		\label{imodegraph}
	\end{figure}
	
	\begin{proof}Let $h\colon (-R,R)\to\R$ be a height zero solution as in \autoref{zeroheight}. We have the relation $\sigma_{2}\circ\Phi_s=\Phi_s\circ\sigma_{2}$, so that we can extend the surface by reflecting through $\{y=0\}$. This extension gives rise to a closed curve $\gamma$. The curve $\gamma$ is smooth since $h$ is asymptotic to a $y$-axis by \autoref{zeroheight} (c). Property \autoref{zeroheight} (b) of $h$ implies embeddedness of $\gamma$. This proves the claim about the generating curve.
		
		For the claimed invariances we note that invariance under $\sigma_{2}$ is obvious by construction of $\gamma$. The surface is invariant under $\sigma_{1}$ because of \autoref{lemode} (a). Similarly we argue for the invariance under $\rho_\pm$: Due to $R(a_0)=-a_0$, the initial values of the half-cylinder solutions remain invariant, hence we get the same solution. These isometries are all contained in $\Stab_{(0,0,0)}$ and generate it.\end{proof}
	
	\begin{Rem}[Uniqueness of height zero solution]\label{zeroheightremark}We conjecture there is exactly one height zero solution, but we do not have a proof at hand. If there were height zero solutions $h_0$ and $h_1$ to initial values $h_0(0)=a_0$ and $h_1(0)=a_1$ respectively, then both would satisfy $R(a_0)=-a_0\neq-a_1=R(a_1)$. Then one solution would be above the other one, so one cylinder would be on the mean convex side of the other cylinder. It appears we could use the maximum principle to rule out this situation. However, we cannot exhibit a point of tangential contact by moving one solution along the $y$-axis because left translations along the $y$-axis and left translations along $c$ do not commute. It seems we need a more elaborate application of the maximum principle -- a \emph{half-space theorem} -- to rule out that an \mcH-cylinder can be on the mean convex side of another \mcH-cylinder. In order to apply the general half-space theorem by \cite{Mazet13}, we have to verify two crucial assumptions: First, the parabolicity of our cylinders, that is, they must be conformal to a punctured plane. This assumption is satisfied in our case due to translational invariance. Second, there is an assumption on the mean curvature of equidistant surfaces to the given \mcH-cylinder. It appears difficult to verify and we do not know whether it holds or not.\end{Rem}
	
	\begin{Rem}[Computed example]\label{cylindercomputed}We used Mathematica to calculate the horizontal \mcH-cylinders. We have computed the ODE in Appendix A, see \autoref{odesol} on page \pageref{odesol}. We set $H=1$. Upon iteration we calculated for $a:=-0.642176$ that \[h(R(a))<10^{-7}\qquad\text{and}\qquad R(a)=-a=0.642176,\] as expected by \autoref{zeroheight}. Finally we extended the solution curve by a reflection through $\{y=0\}$. See \autoref{imodegraph}.
		
		\cite{Lopez14} also has a numerical example, but we believe it is less precise due to a different approach of exhibiting the initial value $h(0)=a$ numerically. For comparison, we note that $h(0)\approx-0.6425$ in \cite{Lopez14}, which we consider a less precise value. For instance, it does not satisfy $R(a)=-a$ numerically and we get ${h(R(a))\approx 2\cdot 10^{-4}}$. 
		
	\end{Rem}
	It is natural to look at the family of \mcH-cylinders with $H\in(0,\infty)$. Computations with Mathematica, illustrated by \autoref{cylinderevolution}, are evidence for the following:
	
	\begin{Conj}\label{emcylconj}The \mcH-cylinders with axis $c$ form an analytic family in ${H\in(0,\infty)}$. For $H\to0$ the surfaces are unbounded and for $H\to\infty$ they shrink to $c$.
	\end{Conj}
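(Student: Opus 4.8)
The conjecture makes three assertions: that the $0$-height solutions, and hence the \mcH-cylinders with axis $c$, depend real-analytically on $H\in(0,\infty)$; that they become unbounded as $H\to 0$; and that they shrink to $c$ as $H\to\infty$. The plan is to reduce the first assertion to the analytic dependence of solutions of \eqref{odeprop} on the parameter $H$ and on initial data, and to control the $0$-height solution in the two limits by barrier arguments. Note first that the function $F$ of \autoref{odesol2} in fact depends on $H$ and enters it affinely --- the mean curvature equation reads (second order operator on $h$) $=2H\cdot(\text{positive function of }t,h,h')$ --- so one may write $F=F_H(t,\xi,\eta)=A(t,\xi,\eta)+H\,B(t,\xi,\eta)$ with $A,B$ real-analytic, hence $F_H$ is jointly real-analytic in $(t,\xi,\eta,H)$. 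By analytic dependence of solutions of ODEs on initial values and parameters, the maximal solution $h=h_{a,H}$ with $h(0)=a$, $h'(0)=0$ is real-analytic in $(t,a,H)$; the one delicate point is the behaviour near the endpoint $t=R(a,H)$, where the generating curve is vertical. There one reparametrizes the curve by its tangent angle $\theta$ in the fiber $S_0$ (by \autoref{lemode}(c),(d) the angle runs monotonically from $0$ to $\tfrac{\pi}{2}$ on the relevant arc); in this parametrization the CMC condition becomes a first-order real-analytic ODE which is regular at the vertical point, and one reads off that the function $\varphi$ from the proof of \autoref{zeroheight}, now viewed as $\varphi(a,H):=h_{a,H}(R(a,H))$, is real-analytic in $(a,H)$, with $h_{a,H}$ a $0$-height solution exactly when $\varphi(a,H)=0$. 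By \autoref{zeroheight} the zero set of $\varphi$ surjects onto $H\in(0,\infty)$.

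To produce the analytic family one applies the analytic implicit function theorem to $\varphi$ along its zero set. The key quantity is $\partial_a\varphi(a_0,H)$: it is essentially the value at the endpoint of the Jacobi field of \eqref{odeprop} obtained by varying the initial height, and the point is to show it does not vanish. I would try to see this geometrically: a zero of $\partial_a\varphi$ would produce, for nearby parameters, two distinct ordered $0$-height solutions, hence two \mcH-cylinders lying on one another's mean convex side, and one would try to reach a contradiction from a Sturm-type comparison for the linearized equation along $\gamma$, or from a halfspace-type statement. If $\partial_a\varphi\neq 0$ at one zero for each $H$, then the implicit function theorem together with analytic continuation yields a real-analytic branch $H\mapsto a_0(H)$, and thereby a real-analytic family $H\mapsto\gamma_H$ of $0$-height solutions and of \mcH-cylinders. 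This nondegeneracy is precisely the difficulty flagged in the Remark following \autoref{thmcyl} (it would simultaneously settle uniqueness of the $0$-height solution), and I expect it to be the main obstacle: the non-commutation of $y$-translations with $\Gamma$ blocks the naive moving-planes argument, so one seems to need a halfspace theorem adapted to these cylinders or a direct oscillation estimate for the linearized operator.

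For $H\to 0$ I would argue by contradiction. If $a_0(H)$ stayed bounded along a sequence $H_n\to 0$, then, using the a priori bounds of \autoref{lemode} and interior estimates, the corresponding solution curves (in the angle parametrization) would subconverge in $\mathcal{C}^\infty_{\mathrm{loc}}$ to a curve solving the $H=0$ equation and still closing up under the reflections of \autoref{thmcyl}, that is, to a $\Gamma$-invariant minimal annulus in $\Sol$. One then rules this out: either by invoking nonexistence of such minimal cylinders, or directly from the minimal ODE --- for which the CMC-sphere a priori bounds of \autoref{lemode} are unavailable --- by exhibiting a first integral or monotonicity that prevents its solutions from closing up. Hence $|a_0(H)|\to\infty$; since by \autoref{zeroheight} the curve $\gamma_H$ passes through $(\pm R(a_0(H)),0,0)$ with $R(a_0(H))=-a_0(H)=|a_0(H)|$, the generating curves, and with them the \mcH-cylinders, become unbounded.

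For $H\to\infty$ I would first establish, by a barrier argument (sliding suitably placed CMC spheres against $f_H$), that the generating curves of all \mcH-cylinders with $H\geq H_0$ are contained in a fixed compact set $\mathcal{K}\subset S_0$. On $\mathcal{K}$ the coefficients of the CMC equation are uniformly controlled, and, because the induced metric on $S_0$ is flat, the equation forces the Euclidean curvature of $\gamma_H$ to satisfy $c^{-1}H\leq\kappa\leq cH$ with $c$ depending only on $\mathcal{K}$ (for $H$ large). A closed $\mathcal{C}^2$ plane curve with curvature bounded below by $\kappa_0>0$ has diameter at most $2/\kappa_0$, so $\operatorname{diam}\gamma_H\to 0$; since $\gamma_H$ is invariant under the rotation $\Psi$ of angle $\pi$ about $c$, it shrinks to the origin of $S_0\subset c$, whence $f_H=\bigcup_s\Phi_s(\gamma_H)$ converges to $\bigcup_s\Phi_s(0,0,0)=c$ in the Hausdorff sense on compact sets. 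One can go further: rescaling $\Sol$ by the factor $2H$ about the origin makes the metrics converge to the flat metric and the flow $\Gamma$ converge to vertical translation, so $2H\gamma_H$ converges to the unit circle, i.e.\ $\gamma_H$ is asymptotically a round circle of radius $1/(2H)$.
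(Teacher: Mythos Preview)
The statement you are attempting to prove is presented in the paper as a \emph{conjecture}, not a theorem: the author offers no proof, only numerical evidence from Mathematica computations (the figure showing generating curves for $H\in\{0.5,0.6,0.65,0.7,1\}$). There is therefore no proof in the paper to compare your attempt against.

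Your proposal is a reasonable sketch of what a proof might look like, but it is not a proof, and you say so yourself. The central gap is the nondegeneracy $\partial_a\varphi(a_0,H)\neq 0$ required for the analytic implicit function theorem. You correctly observe that this is essentially the uniqueness question raised in the Remark following \autoref{thmcyl}, where the author explains precisely why the natural maximum-principle approach fails (translations along the $y$-axis do not commute with $\Gamma$) and why Mazet's halfspace theorem does not obviously apply. Your suggestions --- a Sturm-type comparison for the linearized equation, or a tailored halfspace theorem --- are plausible directions but remain unexecuted; this is the heart of why the statement is still a conjecture. Likewise, for $H\to 0$ your argument reduces to either a nonexistence statement for $\Gamma$-invariant minimal annuli in $\Sol$ or a direct qualitative analysis of the $H=0$ ODE, neither of which you actually carry out; and the compactness step (subconvergence of the angle-parametrized curves) tacitly uses a uniform bound on $R(a_0(H_n))$, which is part of what is in question. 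The $H\to\infty$ portion is the most nearly complete: the barrier bound and the observation that $S_0$ carries the flat metric are correct, and the two-sided curvature estimate $c^{-1}H\le\kappa_{\mathrm{eucl}}(\gamma_H)\le cH$ can indeed be read off from \eqref{odecurve} once $\gamma_H$ is confined to a fixed compact set, though you should say so explicitly rather than assert it.
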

	
	
	\begin{figure}
		\begin{center}
			\includegraphics[width=1\linewidth]{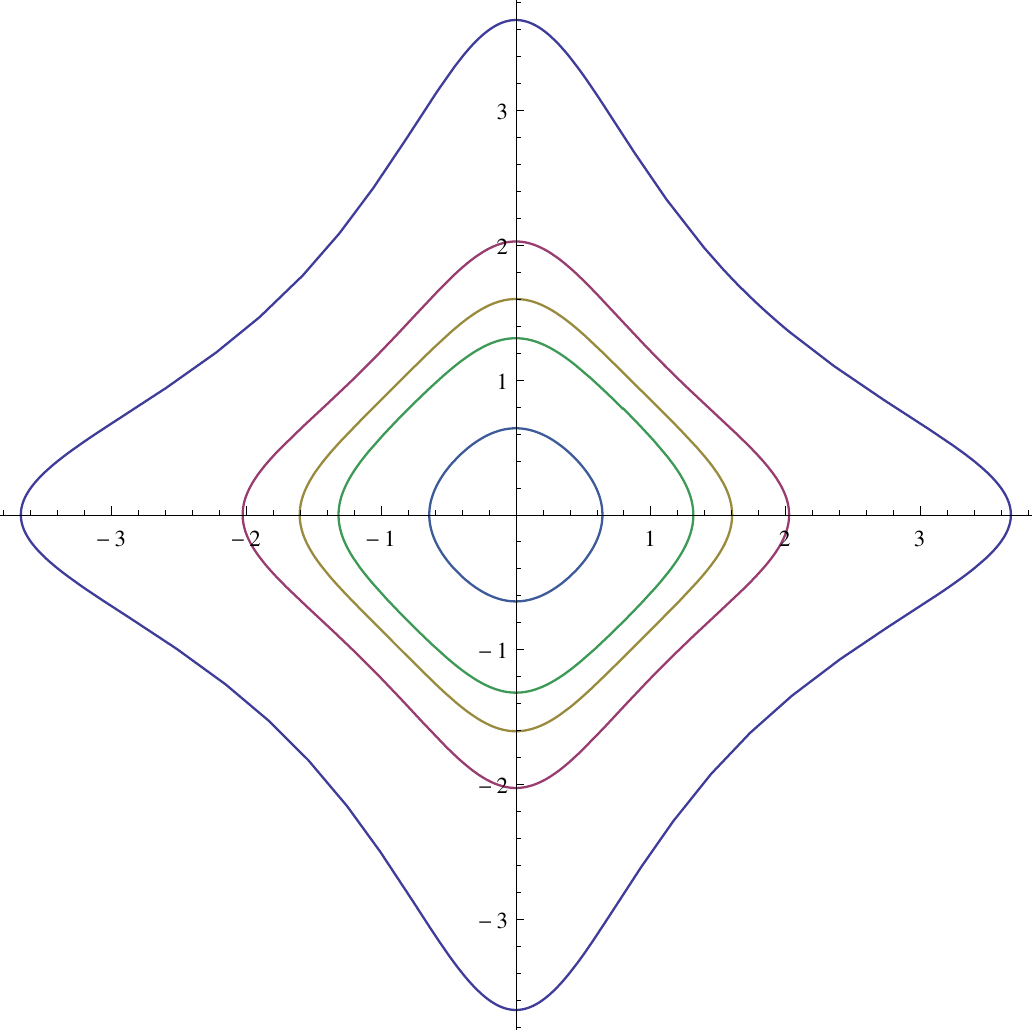}
		\end{center}
		
		\caption{Generating curves of the \mcH-cylinders of \autoref{thmcyl}: From outer to inner contour the mean curvature $H$ takes the values $0.5$, $0.6$, $0.65$, $0.7$ and $1$.}
		\label{cylinderevolution}
	\end{figure}
	
	\subsection{Conjecture on non-embedded solutions with axis \texorpdfstring{$c$}{c}}A shooting method leads to computed examples of non-embedded \mcH-cylinders with axis $c$ in $\Sol$. We shoot orthogonally from the vertical geodesic $c_+$, see \eqref{solgeodesics}, and aim at the $y$-axis, compare with \autoref{immersed}. Assume the solution curve $\gamma_0=(x_0,y_0,0)$ of the ODE \eqref{odecurve} meets the $y$-axis at $T>0$. Then $y_0^\prime(T)$ determines the angle between $\gamma_0$ and the $y$-axis. We extend this portion by the half-turn rotation $\rho_+$ about $c_+$ and reflections through $\{x=0\}$ and $\{y=0\}$, all of which leave the $(x,y)$-plane invariant, to a closed curve $\gamma=(x,y,0)$. The resulting curve is built up from $8$ such portions, possibly non-smooth at multiples of $T$.
	
	Recall that the turning number $\operatorname{turn}\left(\gamma\right)$ satisfies 
	\[
	2\pi\turn\left(\gamma\right)=\int_0^{8T}\kappa_{\operatorname{eucl}}\left(\gamma\right)\,dt+\operatorname{ext}\left(\gamma\right),
	\]
	where the second term $\operatorname{ext}\left(\gamma\right)=8y_0^\prime(T)$ denotes the sum of the exterior angles. If $\gamma_0$ meets the $y$-axis orthogonally at $T$ then $\gamma$ is smooth and $\ext(\gamma)=0$.
	
	
	To compute examples we fix $H=1$ and proceed as follows:
	\begin{itemize}
		\item Take $\gamma_0(0)=(d,d,0)$ for some $d\in\R$ and $\gamma_0^\prime(0)=\frac{1}{\sqrt{2}}(-1,1,0)$ as initial values.
		
		\item Suppose the resulting curve meets the $y$-axis at time $T=T(d)>0$.
		
		\item Vary $d$ while maintaining the same turning number of closed extension curve $\gamma$.
		
		\item Exhibit $d_1$ and $d_2$ with $y_0^\prime(T(d_1))<0$ and $y_0^\prime(T(d_2))>0$. An intermediate value argument gives some $d_0$ between $d_1$ and $d_2$ with $y_0^\prime(T(d_0))=0$.
	\end{itemize}

	
	
	With this ansatz we computed solutions with turning number $9$ and $17$, shown in \autoref{immersed1} and \autoref{immersed2}. 
	
	Aiming at the other vertical geodesic $c_-$ instead of the $y$-axis we find solutions with further turning numbers. See \autoref{immersed3} and \autoref{immersed4} for solutions with turning number $13$ and $21$.
	
	It is straightforward to compute more examples with turning number $5+4k$ where $k\in\N$. The particular value $d=0.429474$ corresponds to the solution generating the embedded horizontal cylinder. 
	
	Moreover, we computed an example with turning number $5$ for the values $H=\frac{1}{2}$ and $d=-0.965$. Increasing $H$ as well as $d$, we computed examples with turning number $5$ up to $H=0.759$. It appears that these solutions with turning number $5$ degenerate to the fivefold cover of a cylinder solution for some $H_0\in(0.759,1)$; see \autoref{bifurcation}.

	\begin{figure}
		\begin{minipage}[t]{0.495\textwidth}
			\begin{center}
				\includegraphics[width=1\textwidth]{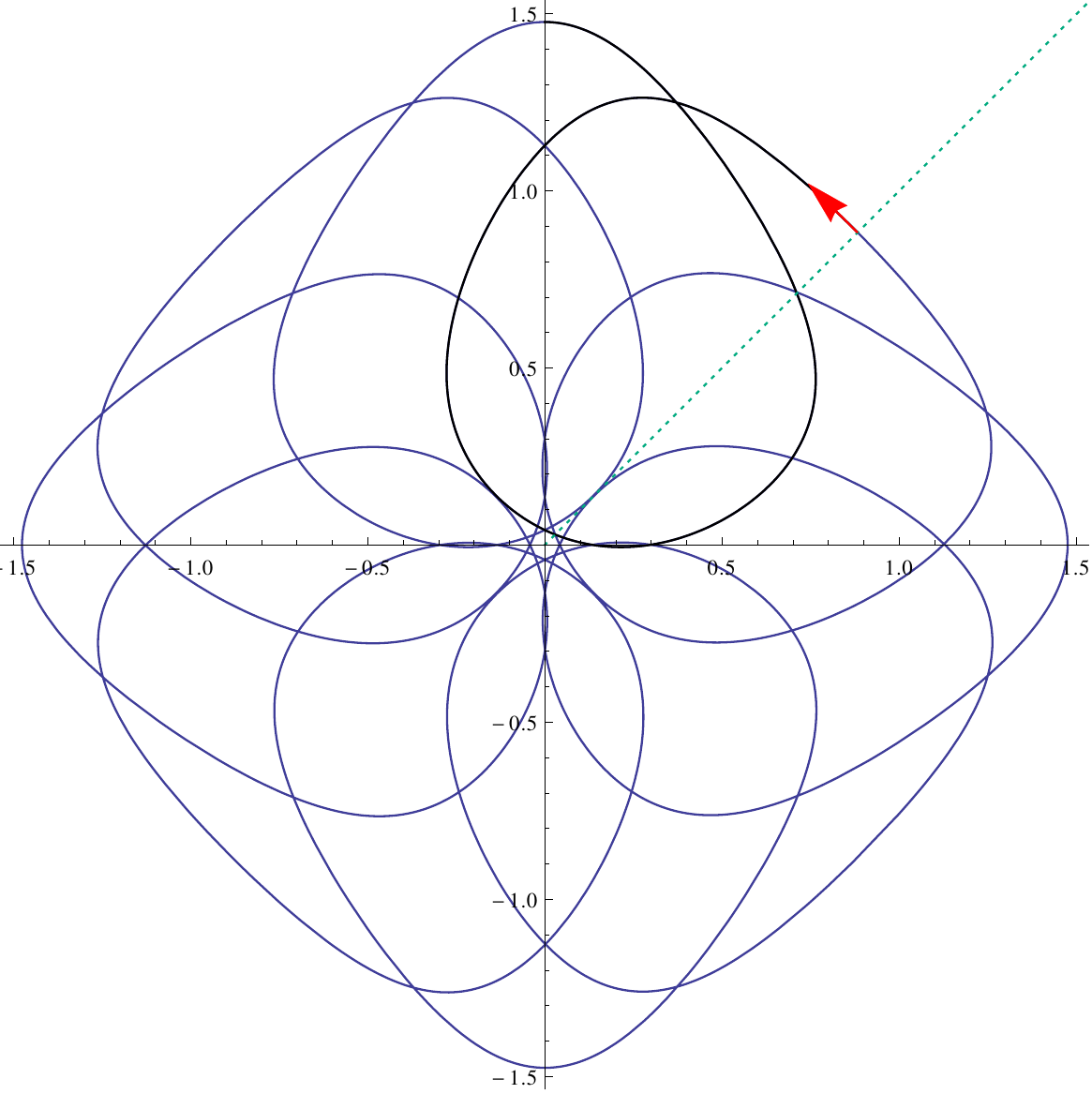}
			\end{center}
			\subcaption{$\turn(\gamma)=9$ and $d=0.8856$}\label{immersed1}
		\end{minipage}
		\begin{minipage}[t]{0.495\textwidth}
			\begin{center}\includegraphics[width=1\textwidth]{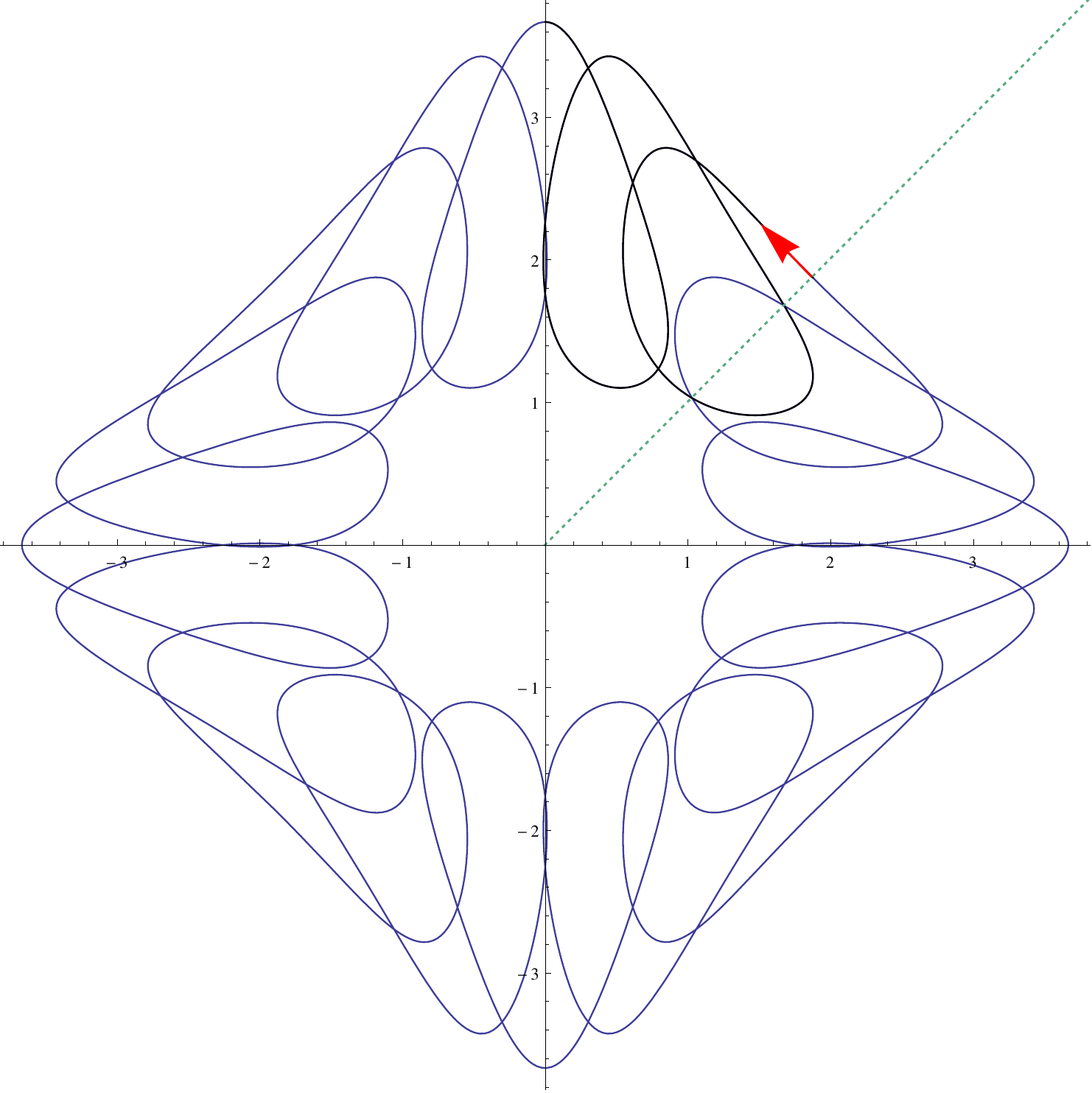}\end{center}
			\subcaption{$\turn(\gamma)=17$ and $d=1.8755$}\label{immersed2}
		\end{minipage}
		\begin{minipage}[b]{0.495\textwidth}
			\begin{center}\includegraphics[width=1\textwidth]{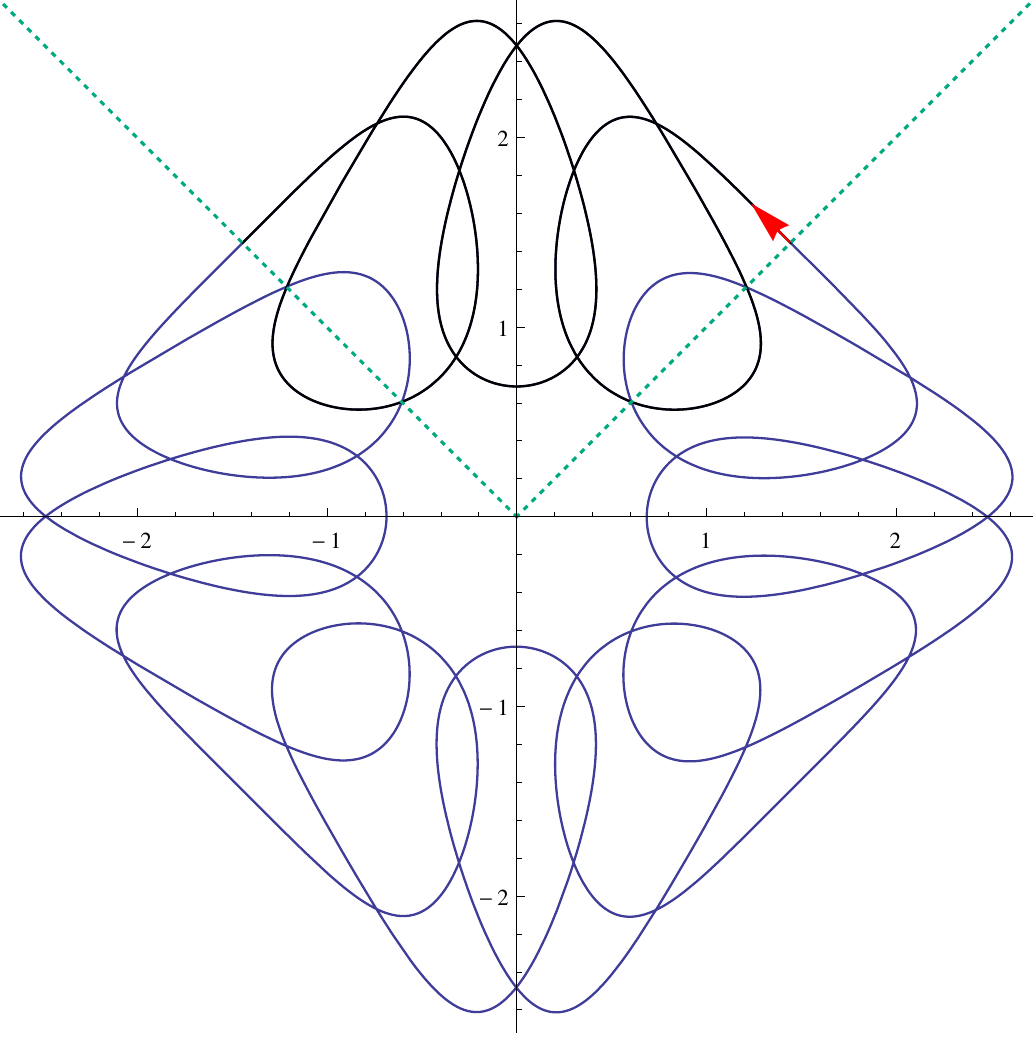}\end{center}
			\subcaption{$\turn(\gamma)=13$ and $d=1.445$}\label{immersed3}
		\end{minipage}
		\begin{minipage}[b]{0.495\textwidth}
			\begin{center}\includegraphics[width=1\textwidth]{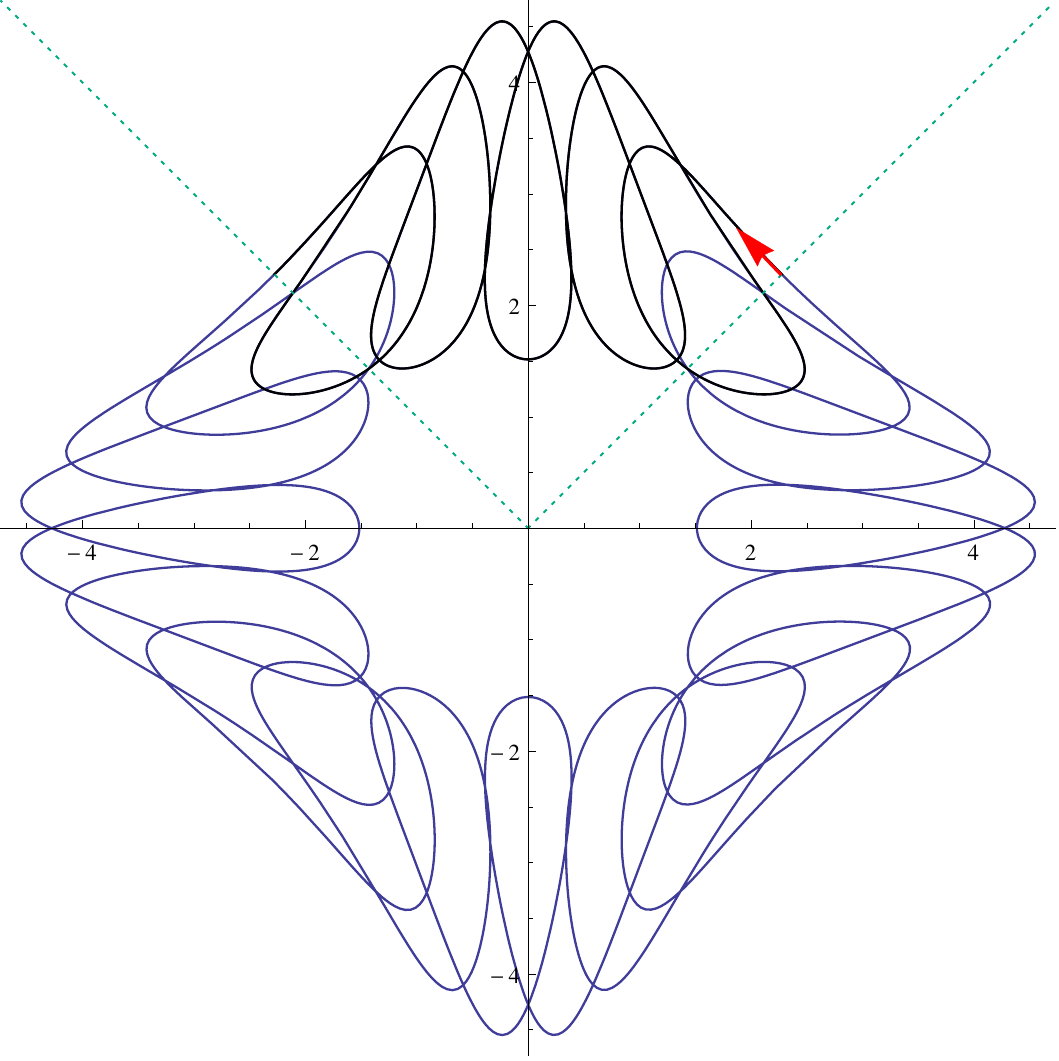}\end{center}
			\subcaption{$\turn(\gamma)=21$ and $d=2.277$}\label{immersed4}
		\end{minipage}
		\caption{Computed examples of solution curves $\gamma$ with turning number $5+4k$, where $k\in\{1,2,3,4\}$. They generate non-embedded cylinders with $H=1$. A fundamental portion of the curve is shown in black; it meets the dotted diagonals or the $y$-axis at right angle and generates the solution curve upon reflection.}
		\label{immersed}
	\end{figure}
	
	\begin{figure}
		\begin{minipage}[t]{0.495\textwidth}
			\begin{center}
				\includegraphics[width=1\textwidth]{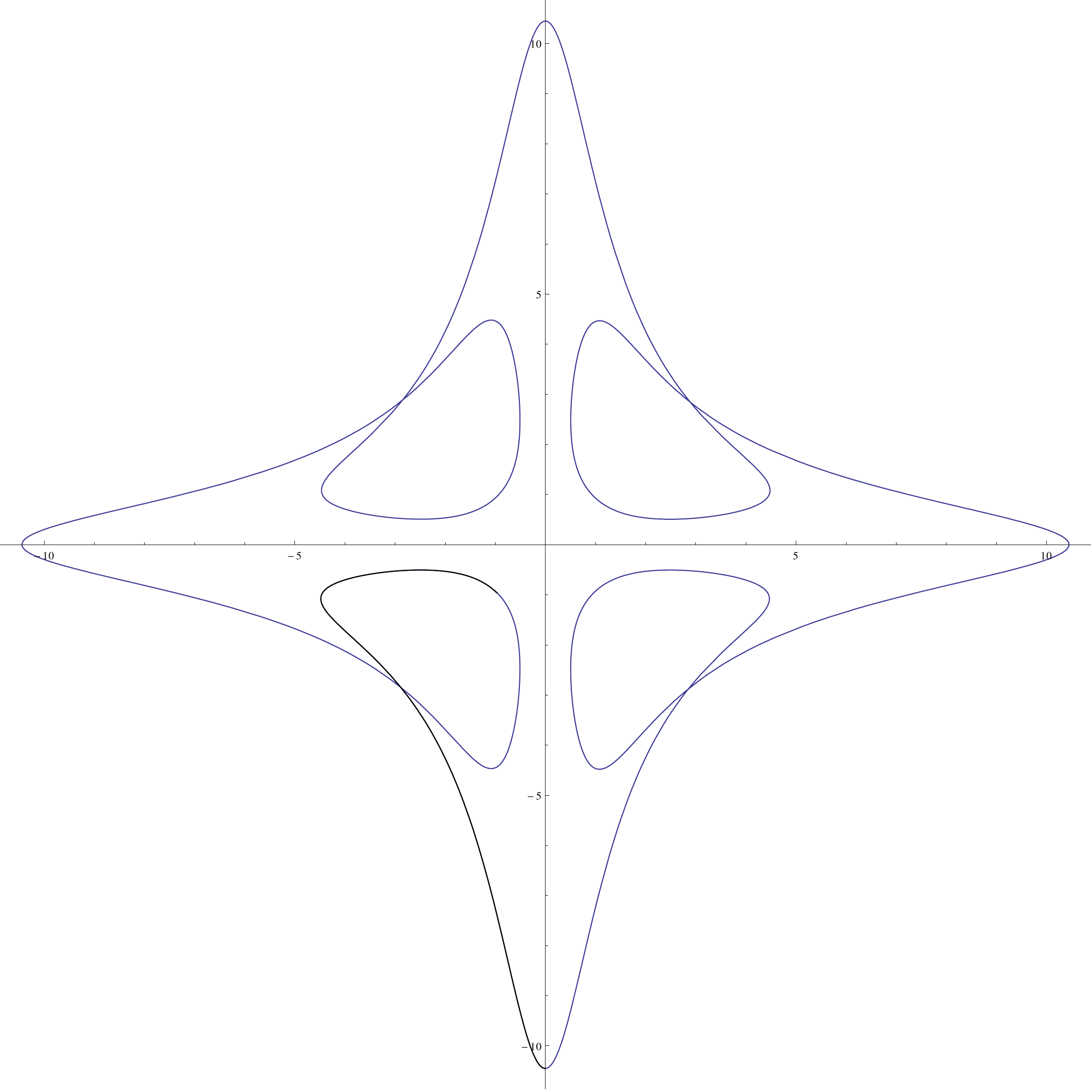}
			\end{center}
			\subcaption{$H=0.5$ and $d=-0.965$}
		\end{minipage}
		\begin{minipage}[t]{0.495\textwidth}
			\begin{center}\includegraphics[width=1\textwidth]{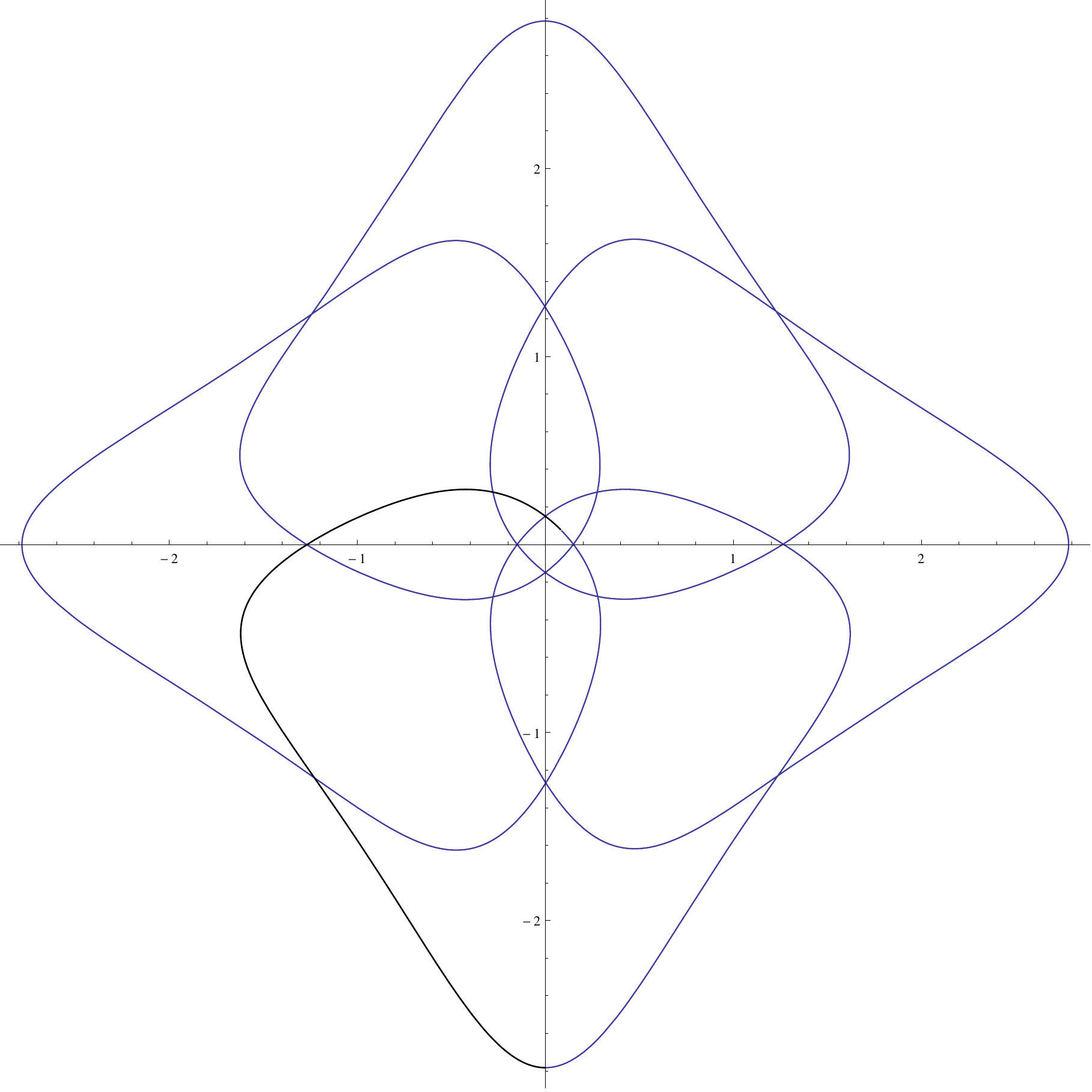}\end{center}
			\subcaption{$H=0.7$ and $d=0.08$}
		\end{minipage}
		\begin{minipage}[b]{0.495\textwidth}
			\begin{center}\includegraphics[width=1\textwidth]{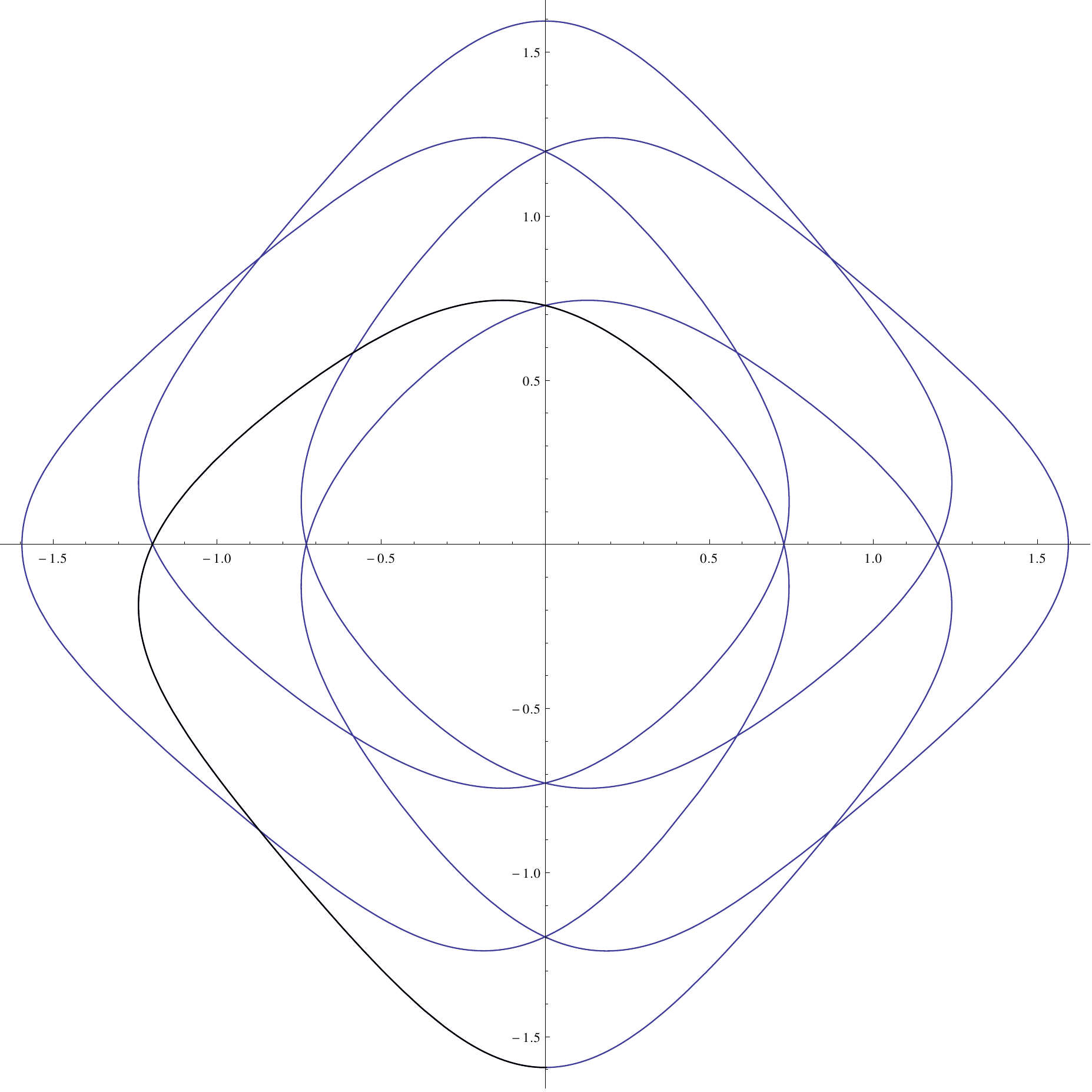}\end{center}
			\subcaption{$H=0.75$ and $d=0.445$}
		\end{minipage}
		\begin{minipage}[b]{0.495\textwidth}
			\begin{center}\includegraphics[width=1\textwidth]{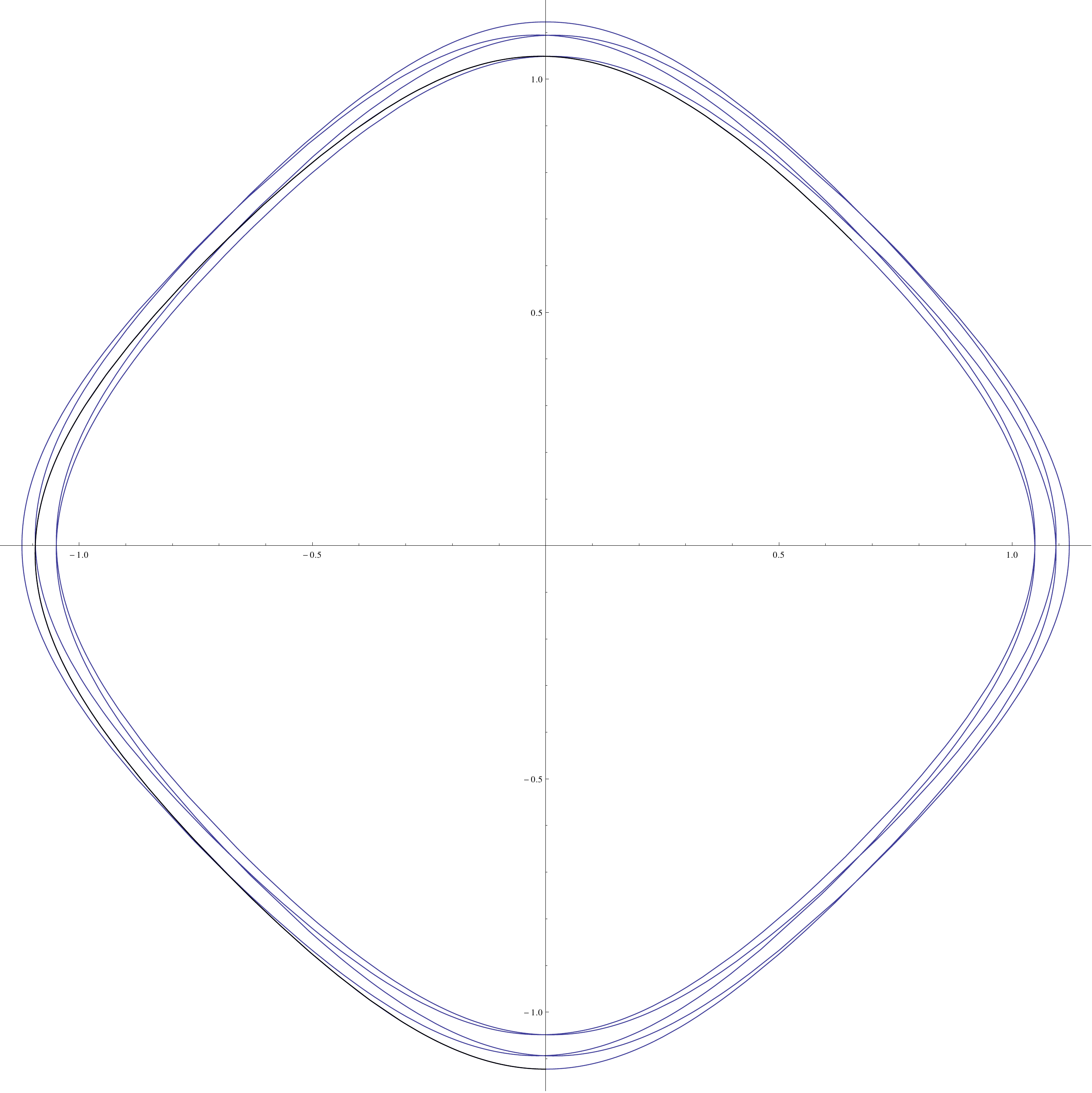}\end{center}
			\subcaption{$H=0.759$ and $d=0.655$}
		\end{minipage}
		\caption{Solution curves with turning number $5$ converge to a multiple cover of the embedded \mcH-cylinder solution upon increasing $H$ and $d$.}\label{bifurcation}
	\end{figure}
	
	\begin{Conj}\label{nonembeddedcyl}For each $H>0$ there is $m=m(H)\in\N$ such that for every natural number $k\geq1$ there exists a non-embedded closed curve with turning number $m+4k$ as generating curve of a $(\Phi_s)_{s\in\R}$-invariant surface with constant mean curvature $H$.
	\end{Conj}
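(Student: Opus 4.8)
The plan is to turn the shooting construction sketched above into a rigorous argument. Fix $H>0$ and, for $d\in\R$, let $\gamma_d$ be the unit-speed solution of the constant mean curvature $H$ ODE (the equation of \autoref{odesol2}, read inside the fiber $S_0$) with $\gamma_d(0)=(d,d,0)\in c_+$ and $\gamma_d'(0)=\tfrac{1}{\sqrt2}(-1,1,0)$, i.e.\ issued orthogonally from the diagonal $c_+$ and aimed at the positive $y$-axis. First I would establish, by the comparison technique of \autoref{lemode} --- moving the $H$-spheres of \autoref{solsphere} inside the mean convex side --- that for $d$ in a suitable open set $\mathcal D\subset\R$ the arc $\gamma_d$ stays in the closed $\tfrac{\pi}{4}$-wedge between $c_+$ and the positive $y$-axis, reaches the $y$-axis at a finite first time $T(d)$, and that $T(\cdot)$ is continuous on $\mathcal D$; the same barriers prevent the arc from escaping to infinity or from becoming asymptotically parallel to the $y$-axis. (If an arc returns to $c_+$ first, one restricts $\mathcal D$ or interchanges the roles of $c_+$ and the $y$-axis; this is bookkeeping, not a genuine obstacle.)

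Next, since $\sigma_{xz},\sigma_{yz},\psi_+$ are isometries of $\Sol$ that intertwine the translations $\Phi_s$ (as used for \autoref{odesol2}(b), \autoref{lemode}(e) and \autoref{thmcyl}), one reflects and rotates $\gamma_d|_{[0,T(d)]}$ under the order-$8$ dihedral group to a closed curve $\widehat\gamma_d$ made of eight congruent arcs, each solving the same ODE; translated along $c$, $\widehat\gamma_d$ therefore generates a $\Gamma$-invariant surface of constant mean curvature $H$, and $\widehat\gamma_d$ is smooth except possibly at the eight vertices lying on $c_\pm$ and the coordinate axes. By the rotation-index formula recalled above, $2\pi\turn(\widehat\gamma_d)=\int_0^{8T}\kappa_{\mathrm{eucl}}+8\,y'(T(d))$, and $\turn(\widehat\gamma_d)\in\mathbb{Z}$ because $\widehat\gamma_d$ is closed; fixing the orientation so that this integer is positive, it is locally constant in $d$ on every component of the set where $\gamma_d|_{[0,T(d)]}$ is embedded and transverse to $c_+$ and to the $y$-axis.

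It then remains to show: (i) the attainable turning numbers are unbounded; (ii) the jumps of $\turn(\widehat\gamma_d)$ across the exceptional values of $d$ are multiples of $4$, so that, with $m=m(H)$ the least attained value in that residue class, every value $m+4k$, $k\ge1$, is taken on some component $U\subset\mathcal D$; and (iii) on each such $U$ the incidence angle $y'(T(d))$ changes sign. Granting these, the intermediate value theorem gives $d_0\in U$ with $y'(T(d_0))=0$: the fundamental arc then meets the $y$-axis orthogonally, $\widehat\gamma_{d_0}$ is a genuinely smooth closed curve, and since its turning number $m+4k\ge5\neq\pm1$ it has self-intersections, so the $\Gamma$-invariant surface it generates is non-embedded. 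For (ii) I would compute the winding of the tangent of $\gamma_d$ on $[0,T(d)]$ relative to the two boundary rays of the wedge and show that an extra loop of the fundamental arc contributes a fixed quantum of $\tfrac{\pi}{2}$ to each of the eight arcs, i.e.\ $4$ to $\turn(\widehat\gamma_d)$.

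The main obstacle is (i), and it is harder than anything in the first part: the maximum-principle comparisons with $H$-spheres are soft --- they confine the region a solution occupies and control its ends, but they give no lower bound on how far the tangent of $\gamma_d$ turns before the arc reaches the $y$-axis. To force $\turn(\widehat\gamma_d)\to\infty$ one needs a quantitative handle on the ODE, e.g.\ a Pr\"ufer-type angle equation for the generating curve, showing that along a suitable family $d_n$ --- say with $d_n$ approaching an endpoint of $\mathcal D$, where the sphere barriers trap $\gamma_{d_n}$ in a fixed bounded set while its geodesic curvature stays bounded below in terms of $H$ --- the arc must spiral arbitrarily many times. A continuation argument starting from the embedded cylinder of \autoref{thmcyl} (a distinguished $d_\ast$ with turning number $1$) runs into the same analytic difficulty. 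This is presumably why the statement is left as a conjecture.
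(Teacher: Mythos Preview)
The statement you are addressing is a \emph{Conjecture} in the paper, not a theorem; the paper offers no proof, only numerical evidence (the computed curves in \autoref{immersed} and \autoref{bifurcation}) and the explicit remark that ``A proof of this conjecture seems beyond the techniques used in the present paper.'' So there is nothing to compare against: your proposal is not competing with a paper proof, because none exists.

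Your outline is a sensible blueprint for an attack, and your final paragraph correctly locates the central obstruction: the sphere comparisons of \autoref{lemode} are soft and give no lower bound on total turning, so step~(i) --- forcing $\turn(\widehat\gamma_d)\to\infty$ along some family $d_n$ --- genuinely requires new input (a Pr\"ufer-type angle equation or some quantitative curvature estimate for the generating ODE). That said, you should not present (ii) and (iii) as essentially routine. For (ii), the claim that each extra loop of the fundamental arc contributes exactly $\tfrac{\pi}{2}$ per arc, hence a jump of $4$ in $\turn$, is plausible from the dihedral symmetry but needs a careful analysis of how the arc can degenerate at the boundary of a component of $\mathcal D$ (tangency to $c_+$, tangency to the $y$-axis, or the arc first returning to $c_+$); different degeneration modes could in principle produce different quanta. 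For (iii), the sign change of $y'(T(d))$ on each component is asserted but not argued; one would want a monotonicity or endpoint-limit statement for the incidence angle, and the paper's barriers do not obviously supply it. In short: your diagnosis that (i) is the hard part matches the paper's own assessment, but (ii) and (iii) are also genuine gaps, not bookkeeping.
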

	A proof of this conjecture seems beyond the techniques used in the present paper.
	
	
	\section{Vertical \texorpdfstring{\mcH}{mcH}-cylinders in \texorpdfstring{$\Sol$}{Sol}}\label{chapter13}
	We now study constant mean curvature surfaces invariant under $(\Psi_s^\pm)_{s\in\R}$, which are left translations along $c_\pm$; for the definitions see \eqref{solgeodesics} and \eqref{cpmtranslation}. We proceed as in the previous section for surfaces invariant under $(\Phi_s)_{s\in\R}$. Since $c_\pm$ is a vertical geodesic in $\Sol$, we call these surfaces \emph{vertical \mcH-cylinders}.

	\subsection{ODE for surfaces invariant by translations along \texorpdfstring{$c_\pm$}{c+-}}For our second surface family we can consider the foliation $(S_{s}^\pm)_{s\in\R}$ of planes above the vertical geodesics \[\{(x,\pm x,0)\colon x\in\R\}\subseteq\{z=0\}\mbox{.}\] We have
	\[
	S_{s}^\pm=\left\{\Psi_s^\pm(x,\mp x,z)\colon x,z\in\R\right\}=\left\{(x+s,\mp x\pm s,z)\colon x,z\in\R\right\},\quad s\in\R.
	\]
	A surface invariant under $(\Psi_s^\pm)_{s\in\R}$ can then be defined as follows: For $\mathcal{C}^2$-functions $x,z\colon J\to\R$, defined on an open interval $J\subset\R$, the curve
	\[\gamma\colon J\to\Sol,\qquad \gamma(t):=(x(t),\mp x(t),z(t))\]
	is in $S_0^\pm$ and the invariant surface generated by left translation of $\gamma$ along $c_\pm$ is paramet\-rised by
	\begin{equation}
	f\colon\R\times J\to\Sol,\qquad f(s,t):=\Psi_s^\pm(\gamma(t))=(x(t)+s,\mp x(t)\pm s,z(t)).\label{surfacecpm}
	\end{equation}
	Again, we consider graphical solutions. In the previous family we used $y$-graphs, which are also Killing graphs with respect to the Killing field $K_2=\partial_y$. Here, we study $z$-graphs with respect to the fibre projection \begin{equation}\mathcal{F}\colon\Sol\to\R^2\times\{0\}\mbox{,}\qquad \mathcal{F}(x,y,z):=(x,y,0).\label{fibreprojection}\end{equation}
	A discussion as in \autoref{odesol2} gives the following result for the ODE of graphical solutions:\index{invariant surface in $\Sol$}
	\begin{Prop}\label{odesol22}Let $H\in\R$. There is a smooth function $F\colon\R^3\to\R$ such that the invariant surface
		\[
		f\colon\R\times J\to\Sol,\qquad f(s,t):=\Psi_{s}^\pm\left(t,\mp t,h(t)\right)\qquad\text{where }h\in\mathcal{C}^2(J,\R),
		\]
		has constant mean curvature $H$ with respect to the inner normal if and only if 
		\begin{equation}h^{\prime\prime}(t)=F(t,h(t),h^\prime(t))\qquad \text{for all t }\in J.\label{odeprop2}\end{equation}
		%
	\end{Prop}
	
	\begin{proof}As in \autoref{odesol2} (a) we set $v_1:=\partial_sf$ and $v_2:=\partial_tf$. The inner normal to $f$ is denoted by $N$ and the coefficients of the first and second fundamental form are defined as $g_{ij}:=\langle v_i,v_j\rangle$ and $b_{ij}:=\langle \nabla_{v_i}v_j,N\rangle$ for $i,j\in\{1,2\}$, respectively. The mean curvature of $f$ is then given by
		\[
		H=\frac{b_{11}g^{11}+2b_{12}g^{12}+b_{22}g^{22}}{2}.
		\]
		Noting
		\[f(s,t)=(t+s,\mp t\pm s,h(t)),\]
		and in view of \eqref{killing}, we obtain \begin{align*}v_1&=K_1\pm K_2=e^{h}E_1\pm e^{-h}E_2,\\
		v_2&= K_1\mp K_2+h^\prime E_3=e^{h}E_1\mp e^{-h}E_2+h^\prime E_3.\end{align*}
		Thus $H$ only depends on $t$, $h(t)$, $h^\prime(t)$ and $h^{\prime\prime}(t)$.
		We get
		\begin{align*}
		\nabla_{v_2}v_2&=\nabla_{v_2}e^{h}E_1\mp \nabla_{v_2}e^{-h}E_2+\nabla_{v_2}h^\prime E_3\\
		&=\underbrace{h^\prime e^{h}E_1+e^{h}\nabla_{v_2}E_1\pm h^\prime e^{h}E_2\mp e^{-h}+\nabla_{v_2}E_2h^\prime \nabla_{v_2}E_3}_{=:w}+h^{\prime\prime}E_3
		\end{align*}
		It is obvious that $w$ does not depend on $h^{\prime\prime}$ and the only term of $H$ containing $h^{\prime\prime}$ is
		\[
		\frac{b_{22}g^{22}}{2}=\frac{b_{22}g_{11}}{2\det(g)}=\frac{\langle \nabla_{v_2}v_2,N\rangle g_{11}}{2\det(g)}=\frac{(h^{\prime\prime}\langle N,E_3\rangle +\langle w,N\rangle)g_{11}}{2\det(g)}.
		\]
		Since we have a $z$-graph at hand and $N$ is the inner normal, we see $\langle N,E_3\rangle>0$. Moreover $g_{11}>0$ because $\Psi_{s}^\pm$ never acts trivially. The arguments for the existence of a smooth function $F\colon\R^3\to\R$ with $h^{\prime\prime}(t)=F(t,h(t),h^\prime(t))$ are now as in \autoref{odesol2} (a).\end{proof}

	\subsection{Geometric discussion of ODE and its extension \texorpdfstring{\mcH}{mcH}-cylinders with axis \texorpdfstring{$c_\pm$}{cplusminus}}We discuss the ODE \eqref{odeprop2} now. Instead of formulating separate versions of \autoref{lemode} and \autoref{zeroheight}, we will point out the differences for this particular geometric setting when proving the following result:

	
	\begin{Theorem}\label{thmcyl2}For each $H>0$ there is a smooth embedded simple closed curve $\gamma$ in $S_{0}^\pm=\left\{\left(x,\mp x,z\right)\colon x,z\in\R\right\}$ which generates a $(\Psi_s^\pm)_{s\in\R}$-invariant embedded surface $f(s,t)=\Psi_s^\pm(\gamma(t))$ in $\Sol$ with constant mean curvature $H$, invariant by the half-turn rotations $\rho_+$ and $\rho_-$.
	\end{Theorem}
	We call these surfaces \emph{\mcH-cylinders with axes $c_+$ and $c_-$} or, since $c_+$ and $c_-$ are vertical geodesics, also \emph{vertical \mcH-cylinders in $\Sol$}. We refer to \eqref{solgeodesics}, \eqref{cpmtranslation} and \eqref{solrotations} for the definitions of $c_\pm$, $\Psi_s^\pm$ and $\rho_\pm$, respectively.\index{cylinder in $\Sol$}
	
	\begin{proof}The discussion from \autoref{chapter122} is also applicable to surfaces invariant by left translation $(\Psi_s^\pm)_{s\in\R}$ along $c_\pm$. Therefore we only indicate the differences in the proofs of \autoref{lemode} and \autoref{zeroheight}.
		
		First we fix some notation. We let $h\colon I_{\max}\to\R$ be the unique maximal solution of \eqref{odeprop2} with $h(0)=a$ and $h^\prime(0)=0$. The surface $f$ is given by
		\[f(s,t)=\Psi_{s}^\pm(t,\mp t,h(t))=(t+s,\mp t\pm s,h(t)),\]
		which is the surface invariant under the left translations $(\Psi_s^\pm)_{s\in\R}$ and generated by the graph $t\mapsto(t,\mp t,h(t))$. We denote its image by $\Sigma$. We will use frequently that left translations along $c_\mp$ and $z$-axes are isometries. We also make use of \autoref{solsphere}.
		
		\emph{Symmetric solution with bounded existence interval and graph of finite height:} 
		
		In order to obtain a symmetric solution as in \autoref{lemode} (a), we fix the initial values at $h(0)=a$ and $h^\prime(0)=0$ and argue as follows: The half-turn rotation $\rho$ about $c$ satisfies
		\begin{equation}\Psi_{-s}^\pm\circ\rho=\rho\circ\Psi_s^\pm\label{commutatorsol2}
		\end{equation}
		This follows directly from \eqref{solrotations} and \eqref{cpmtranslation}, because all these mappings are also Euclidean isometries. The surfaces $f$ and $\tilde{f}:=\rho\circ f$ are isometric surfaces, both invariant under $(\Psi_s^\pm)_{s\in\R}$ due to \eqref{commutatorsol2}. However, $\tilde{f}$ is generated by $(-t,\pm t,h(t))$. Since both of these graphs have the same initial values, they must be equal and we obtain a symmetric solution, that is $I_{\max}=(-R,R)$ for some $R=R(a)\in(0,\infty]$ and $h(t)=h(-t)$ for all $t\in(-R,R)$.
		
		The arguments for the bounded existence interval carry over when replacing $\Pi_y$, defined in the proof of \autoref{lemode} (b), with the fibre projection $\mathcal{F}$ as defined in \eqref{fibreprojection}. We then move \mcH-spheres by left translation along some $z$-axis towards $\Sigma$. The graph $h$ satisfies $\lim_{t\to\pm R}\lvert h(t)\rvert<\infty$ for if it were false we could move \mcH-spheres towards $\Sigma$ by left translation along $c_\mp$. Thus we also have a bounded graph in this case.
		
		\emph{Height zero half-cylinder solution:} The arguments from \autoref{zeroheight} (a) and (b) can be copied to show existence of $a_0<0$ such that the solution $h\colon(-R(a_0),R(a_0))\to\R$ of \eqref{odeprop2} with initial values $h(0)=a_0$ and $h^\prime(0)=0$ satisfies $h(\pm R(a_0))=0$ and $h(t)<0$ for all $t\in(-R(a_0), R(a_0))$. When copying the arguments, one has to replace $y$-axes by $z$-axes to move \mcH-spheres along. The asymptotic behaviour at the boundary follows in exactly the same way, that is, we have $\lim_{t\to\pm R}h^\prime(t)=\pm\infty$.
		
		\emph{Existence of vertical cylinders:} In the final step we take such height zero solution. Since $\rho_\pm$ commutes with left translation along $c_\pm$, see \eqref{cpmtranslation} and \eqref{solrotations}, we can extend this solution to a smooth embedded closed curve by the half-turn rotation $\rho_\pm$ about the axis $c_\pm$. The half-turn rotation $\rho_\mp$ satisfies $\rho_\mp\circ\Psi_s^\pm=\Psi_{-s}^\pm\circ\rho_\mp$ and thus leaves the surface invariant. This finishes the proof.\end{proof}
	
	\begin{Rem}Constant mean curvature surfaces invariant under $(\Psi_s^\pm)_{s\in\R}$ in $\Sol$ have not been considered before and this result shows that interesting surfaces are generated. We have not computed the ODE for these surfaces so that we do not make any claims about non-embedded solutions with axis $c_\pm$.\end{Rem}
	
	We believe \autoref{emcylconj} also applies to embedded \mcH-cylinders with axis $c_\pm$:
	\begin{Conj}\label{emcylconj2}The \mcH-cylinders with axis $c_\pm$ form an analytic family with respect to  ${H\in(0,\infty)}$. For $H\to0$ the surfaces are unbounded and for $H\to\infty$ they shrink to $c_\pm$.
	\end{Conj}
	
	\part{\texorpdfstring{\mcH}{mcH}-cylinders in \texorpdfstring{$E(\kappa,\tau)$}{E(k,t)}-spaces diffeomorphic to \texorpdfstring{$\R^3$}{R3}}
	The $E(\kappa,\tau)$-spaces are Riemannian fibrations $E\to B$ with geodesic fibres, bundle curvature $\tau\in\R$ and base curvature $\kappa\in\R$. We will only consider those diffeomorphic to $\R^3$. These arise for $\kappa\leq0$, i.e., we exclude the Berger spheres and $\mathbb{S}^2\times\R$. In Section 4 we describe these spaces. Most results concerning constant mean curvature surfaces then become ``horizontal`` or ``vertical`` generalisations of results in $\R^3$. It turns out that the arguments given in Section 2 and 3 carry over to prove existence of tilted \mcH-cylinders in $E(\kappa,0)$ and of horizontal \mcH-cylinders in $E(\kappa,\tau)$ for $\tau\neq0$. In the final Section we compute the horizontal diameter of a horizontal \mcH-cylinder in $E(\kappa,\tau)$-spaces with $\kappa\leq0$.
	
	\section{Preliminaries on \texorpdfstring{$E(\kappa,\tau)$}{E(k,t)}-spaces}\label{chapter21}
	
	First we introduce some general notations for and properties of $E(\kappa,\tau)$-spaces. Then we describe an explicit model for $\kappa\leq0$ and only work therein. This model is a metric Lie group and we specify geodesics, left translations along them and other properties needed to carry over the arguments from Section 2.
	\subsection{General notation and properties}\label{sec41}The $E(\kappa,\tau)$-spaces are simply connected homogeneous three-manifolds $E$ diffeomorphic to $\R^3$, $\mathbb{S}^3$ or $\mathbb{S}^2\times\R$ and arise as Riemannian fibrations $\Pi\colon E\to B$ with geodesic fibres, where $B$ has curvature $\kappa\in\R$ and the bundle curvature\index{bundle curvature} is $\tau\in\R$. They have some geometric properties, which can be stated without an explicit model.  
	
	\subsubsection{Slope of geodesics}\label{slopesection}Associated to each $E(\kappa,\tau)$-space with Riemannian submersion $\Pi\colon E\to B$ is a Killing field $\xi$, which is tangent to the geodesic fibres. As a consequence of \emph{Clairaut's Theorem}, geodesics have the following property:
	\begin{Prop}[{{\cite[Lemma 3.7]{Engel}}}]\label{slope}
		Let $c\colon\R\to E$ be a unit-speed geodesic in an $E(\kappa,\tau)$-space with Riemannian submersion $\Pi\colon E\to B$. Then there is $\alpha\in[0,\pi]$ with $\langle c^\prime,\xi\circ c\rangle\equiv\cos(\alpha)$. We call $\alpha$ \emph{slope of $c$ with respect to $\xi$}.
		The projection $\tilde{c}:=\Pi\circ c$ is a curve of constant geodesic curvature $-\tau\cot(\alpha)$ in $B$.
	\end{Prop}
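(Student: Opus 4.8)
The plan is to exploit the vertical Killing field $\xi$ together with Clairaut's theorem for the Riemannian submersion $\Pi\colon E\to B$, and to pass the resulting statement through the O'Neill-type formula relating $\nabla$ on $E$ to $\widetilde\nabla$ on $B$. First I would recall that $T_s$ is a one-parameter group of isometries with generating Killing field $\xi$, and that $\lvert\xi\rvert\equiv 1$ since the fibers are unit-speed geodesics. For a unit-speed geodesic $c$, Clairaut's theorem (or directly: $\frac{d}{ds}\langle c',\xi\circ c\rangle=\langle\nabla_{c'}c',\xi\rangle+\langle c',\nabla_{c'}\xi\rangle=0$, the first term vanishing because $c$ is a geodesic and the second because $\nabla\xi$ is skew-symmetric, as $\xi$ is Killing) shows $\langle c',\xi\circ c\rangle$ is a constant in $[-1,1]$; writing it as $\cos\alpha$ with $\alpha\in[0,\pi]$ defines the slope. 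This gives the first assertion with essentially no computation.

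Next I would compute the geodesic curvature of $\tilde c=\Pi\circ c$ in $B$. Decompose $c'=\cos\alpha\,(\xi\circ c)+\sin\alpha\, h$, where $h$ is the horizontal unit vector field along $c$ (assuming $\alpha\in(0,\pi)$; the cases $\alpha\in\{0,\pi\}$ give a vertical geodesic projecting to a point, consistent with the stated curvature being undefined/irrelevant there). Then $d\Pi(c')=\sin\alpha\, d\Pi(h)$, so $\tilde c$ has constant speed $\sin\alpha$ and its unit tangent is $\widetilde T=d\Pi(h)$. The geodesic curvature of $\tilde c$ is $\kappa_g=\langle\widetilde\nabla_{\widetilde T}\widetilde T,\widetilde\nu\rangle/(\text{speed})$ suitably normalized; I would instead compute $\widetilde\nabla_{\widetilde T}\widetilde T$ via the submersion by taking $d\Pi$ of the horizontal part of $\nabla_{h}h$ (up to reparametrization), using that for a Riemannian submersion the horizontal lift of $\widetilde\nabla_XY$ is the horizontal component of $\nabla_{\bar X}\bar Y$. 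The geodesic equation $\nabla_{c'}c'=0$ expanded with $c'=\cos\alpha\,\xi+\sin\alpha\,h$ yields, after using that $\nabla_\xi\xi=0$ (fibers are geodesics), that $\nabla_\xi h$ and $\nabla_h\xi$ are controlled by the bundle curvature $\tau$ — precisely, the O'Neill tensor $A$ for these spaces satisfies $A_h\xi=\tau\,(h\times\xi)$-type relations — and one extracts that the horizontal part of $\nabla_h h$ has magnitude $\tau\cot\alpha$, with the sign determined by orientation. Feeding this through $d\Pi$ gives $\kappa_g(\tilde c)=-\tau\cot\alpha$, constant since $\alpha$ is.

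The main obstacle is bookkeeping the O'Neill/fundamental-tensor relations for $E(\kappa,\tau)$ with the correct sign and normalization, i.e.\ verifying that the horizontal component of $\nabla_X\xi$ for horizontal unit $X$ equals $\tau\,(X\times\xi)$ (in the model's orientation), and that this exactly produces the factor $-\tau\cot\alpha$ after dividing by the speed $\sin\alpha$ coming from the reparametrization $\tilde c$ versus arclength in $B$. This is a finite linear-algebra computation in the canonical frame, but it is the only place where the specific structure constants of $E(\kappa,\tau)$ (as opposed to generic submersion formalism) enter, so I would do it carefully — or, since the paper is content to cite \cite[Lemma 3.7]{Engel} for this Proposition, simply invoke that reference and restrict the self-contained argument to the slope-constancy part, which follows purely from $\xi$ being Killing and unit-length.
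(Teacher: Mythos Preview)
The paper does not actually prove this Proposition: it simply attributes the statement to Clairaut's theorem and cites \cite[Lemma 3.7]{Engel}. So your proposal already goes beyond what the paper provides. Your argument for the constancy of $\langle c',\xi\circ c\rangle$ is correct and self-contained, and your outline for the geodesic curvature of $\tilde c$ via the horizontal/vertical decomposition and the O'Neill tensor is the standard route; your own observation at the end --- that one may simply invoke the reference --- is precisely the paper's choice.

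One small point of care in your sketch: when you expand $\nabla_{c'}c'=0$ with $c'=\cos\alpha\,\xi+\sin\alpha\,h$, you write terms such as $\nabla_h\xi$, $\nabla_\xi h$, $\nabla_h h$. The Killing field $\xi$ is globally defined, but $h$ is a priori only a vector field \emph{along} $c$, so the only derivatives that make sense without further choices are $\nabla_{c'}\xi$ and $\nabla_{c'}h$. Splitting these further into $\nabla_\xi(\cdot)$ and $\nabla_h(\cdot)$ pieces requires extending $h$ to a neighbourhood (e.g.\ as a basic horizontal field for the submersion), and different extensions change those individual terms even though the sum $\nabla_{c'}h$ is invariant. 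This is not a genuine gap --- working directly in the canonical orthonormal frame of $E(\kappa,\tau)$, as you suggest in your final paragraph, sidesteps the issue entirely --- but it is exactly the spot where the sign and the $\sin\alpha$ normalisation must be tracked, so it is worth being explicit if you choose to write out the computation rather than cite it.
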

	
	We call the geodesic fibres, corresponding to $\alpha=0$ and $\alpha=\pi$, \emph{vertical geodesics}\index{vertical geodesic}.  On the other hand, the case of $\alpha=\frac{\pi}{2}$ corresponds to \emph{horizontal geodesics}. We refer to the other cases as \emph{tilted geodesics}.
	
	For $\tau=0$ \emph{all} geodesics project to geodesics of the base space $B$, while for $\tau\neq0$ only horizontal geodesics project to geodesics of $B$ (evident from \autoref{slope}). In the following we are only considering geodesics $c$ in $E$ which project onto geodesics in $B$ and have slope $\alpha\in(0,\pi)$.
	
	\subsubsection{Vertical planes}A \emph{vertical plane} $P$ is the preimage $P=\Pi^{-1}(\tilde{c})\subseteq E$ where $\tilde{c}\colon \R\to B$ is a geodesic in the base $B$. A vertical plane is totally geodesic if and only if $\tau=0$. See for example \cite{Notes2009}. 
	
	\subsubsection{Isometries induced by vertical and horizontal geodesics/planes}
	In the $E(\kappa,\tau)$-spaces we have the following isometries:
	\begin{Prop}\label{isometries}Let $E$ be an $E(\kappa,\tau)$-space with Riemannian submersion $\Pi\colon E\to B$ and vertical Killing field $\xi$.
		\begin{enumerate}[(a)]
			\item The flow of $\xi$ is a one-parameter family of isometries $(T_s)_{s\in\R}$, called \emph{vertical translations}.
			
			\item Rotations about fibres, that is, about vertical geodesics, are isometries.
			
			\item Each horizontal geodesic admits a half-turn rotation, i.e., an isometric rotation of angle $\pi$ about the horizontal geodesic.
			
			\item For $\tau=0$, reflections through vertical planes are isometries.
		\end{enumerate} 
	\end{Prop}
	These properties are standard and can be found in \cite{Notes2009}.
	

	\subsection{\texorpdfstring{$E(\kappa,\tau)$}{E(k,t)}-spaces with \texorpdfstring{$\kappa\leq0$}{k<=0}}\label{sec42}
	The $E(\kappa,\tau)$-spaces with $\kappa\leq0$ are diffeomorphic to $\R^3$ and arise as metric Lie groups. We describe a model and specify some geometric properties. The advantage of this model is that the limits $\kappa\to0$ and $\tau\to0$ are well-defined, also on the level of orthonormal frames.
	
	\subsubsection{Model}For our purpose the classification of \cite{MP} provides a convenient description of these spaces. For $\kappa\leq 0$ and $\tau\in\R$ let \[A(\kappa,\tau):=\begin{pmatrix}\sqrt{-\kappa}&0\\2\tau&0\end{pmatrix}.\]
	We want to compute
	\[\begin{pmatrix}a_{11}(z)&a_{12}(z)\\ a_{21}(z)&a_{22}(z)\end{pmatrix}:=e^{zA(\kappa,\tau)}.\]
	For $\kappa<0$ we have
	\[e^{zA(\kappa,\tau)}=\begin{pmatrix}e^{z\sqrt{-\kappa}}&0\\ \frac{2\tau}{\sqrt{-\kappa}}\left(e^{z\sqrt{-\kappa}}-1\right)&1\end{pmatrix}\]
	and for $\kappa=0$ we get
	\[
	e^{zA(0,\tau)}=\begin{pmatrix}1&0\\2\tau z&0\end{pmatrix}.
	\]
	We observe $\lim_{\kappa\to0}e^{zA(\kappa,\tau)}=e^{zA(0,\tau)}$ for all $z,\tau\in\R$ so that the first expression also makes sense for $\kappa=0$.
	
	The space $\R^2\ltimes_{A(\kappa,\tau)}\R$ is a metric Lie group with group structure\index{left-translation}
	\begin{equation}
	(x_1,y_1,z_1)\ast (x_2,y_2,z_2):=\left((x_1,y_1)+e^{z_1A(\kappa,\tau)}(x_2,y_2),z_1+z_2\right)\label{ektgroup}
	\end{equation}
	and Riemannian metric
	\begin{equation}\begin{split}\langle\cdot,\cdot\rangle_{(x,y,z)}&=\left(e^{-2z\sqrt{-\kappa}}-\frac{4\tau}{\kappa}(e^{-z\sqrt{-\kappa}}-1)^2\right)\,dx^2+dy^2+dz^2\\
	&+\frac{2\tau}{\sqrt{-\kappa}}(e^{-z\sqrt{-\kappa}}-1)(dx\otimes dy+dy\otimes dx).\end{split}\label{metricekt}
	\end{equation}
	We also write $\Ekt:=(\R^3,\langle\cdot,\cdot\rangle)$. Left multiplication by $p\in\Ekt$ is therefore an isometry \(\Lcal_p\colon \Ekt\to \Ekt\mbox{, }\Lcal_p(g):=p\ast g\mbox{.}\)
	The canonical orthonormal frame, obtained by left translation of the Euclidean frame from the origin $(0,0,0)$, is
	\begin{align}E_1(x,y,z)&=e^{z\sqrt{-\kappa}}\partial_x+\frac{2\tau}{\sqrt{-\kappa}}\left(e^{z\sqrt{-\kappa}}-1\right)\partial_y,\nonumber\\
	E_2(x,y,z)&=\partial_y,\label{ektframe}\\
	E_3(x,y,z)&=\partial_z.\nonumber
	\end{align}
	The Riemannian connection with respect to this frame has the following representation:\begin{equation}\begin{array}{lll}\nabla_{E_1}E_1=\sqrt{-\kappa}E_3,&\nabla_{E_1}E_2=\tau E_3,&\nabla_{E_1}E_3=-\sqrt{-\kappa}E_1-\tau E_2,\\
	\nabla_{E_2}E_1=\tau E_3,& \nabla_{E_2}E_2=0,&\nabla_{E_2}E_3=-\tau E_1,\\
	\nabla_{E_3}E_1=\tau E_2,&\nabla_{E_3}E_2=-\tau E_1,& \nabla_{E_3}E_3=0.
	\end{array}\label{ektconnection}\end{equation}
	
	We check that $\Ekt$ is indeed an $E(\kappa,\tau)$-space.
	\begin{Prop}\label{ektfibration}Let $\kappa\leq0$ and $\tau\in\R$. On $\R^2$ we consider the Riemannian metric
		\begin{equation}
		\tilde{g}_{(x,z)}=e^{-2z\sqrt{-\kappa}}dx^2+dz^2.\label{ektbase}
		\end{equation}
		Then $\Pi\colon(\R^3,\langle\cdot,\cdot\rangle)\to(\R^2,\tilde{g})\mbox{, }(x,y,z)\mapsto(x,z)$ is a Riemannian submersion with geodesic fibres over the simply connected surface $(\R^2,\tilde{g})$ with constant curvature $\kappa$. This submersion has bundle curvature $\tau$, so that $(\R^3,\langle\cdot,\cdot\rangle)$ is isometric to $E(\kappa,\tau)$. The vertical Killing field is $\xi=E_2$ and its flow by vertical translations $(T_s)_{s\in\R}$ is given by\begin{equation}T_s(x,y,z)=(x,y+s,z)=\Lcal_{(0,s,0)}(x,y,z),\qquad s\in\R.\label{ektvtranslation}\end{equation}
		%
		%
	\end{Prop}
	
	\begin{proof}[Sketch of proof]We can refer to various Theorems in \cite{MP}, but let us give the explicit argument:
		\begin{itemize}
			\item The vertical space is spanned by $E_2$ while the horizontal space is spanned by $E_1$ and $E_3$.
			\item For a horizontal vector $v=\lambda E_1+\mu E_3$ we have $\tilde{g}_{(x,z)}(d\Pi\,v,d\Pi\,v)=\lambda^2+\mu^2$, so that $\Pi$ is indeed a Riemannian submersion.
			\item In view of the Riemannian connection we have $\langle R(E_1,E_3)E_3,E_1)=\kappa-3\tau^2$, so that $(\R^2,\tilde{g})$ is a simply connected surface with constant curvature $\kappa$.
			\item We also have $\frac{1}{2}\langle \nabla_{E_3}E_1-\nabla_{E_1}E_3,E_2\rangle=\tau$, which proves the claim about the bundle curvature.\index{bundle curvature}
			\item In order to show that $\xi=E_2$ is the Killing field associated to $\Pi$, it is sufficient to check $T_s(x,y,z)=\Lcal_{(0,s,0)}(x,y,z)$. Thus $T_s$ is an isometry, so that the infinitesimal generator $E_2=\partial_y$ of $(T_s)_{s\in\R}$ is indeed a Killing field.
			\qedhere
		\end{itemize}
	\end{proof}
	
	\subsubsection{Geodesics whose projection is a geodesic and left translations}
	As explained in \autoref{slopesection}, we only consider geodesics $c$ in an $E(\kappa,\tau)$-space whose projection in $B$ is also a geodesic. For the specific model $\Ekt$ with Riemannian submersion $\Pi(x,y,z)=(x,z)$, it is easy to verify that \[\R\to\Ekt,\qquad s\mapsto(0,0,s)\] is a horizontal geodesic. Indeed, its tangent vector is $E_3=\partial_z$ and $\nabla_{E_3}E_3=0$ by \eqref{ektconnection}. Since $\Ekt$ is homogeneous and rotations about fibres are isometries, it is sufficient to consider geodesics $c\colon\R\to \Ekt$ with $\Pi(c(s))=(0,s)$ and $c(0)=(0,0,0)$. These are given by
	\begin{equation}
	c\colon \R\to \Ekt,\; c(s):=\begin{cases}
	(0,0,s)&\mbox{if }\tau\neq0,\\
	(0,\cos(\alpha)\cdot s,\sin(\alpha)\cdot s)&\mbox{if }\tau=0,
	\end{cases}\label{ektgeodesic}
	\end{equation}
	where $\alpha\in(0,\pi)$ is the slope of $c$. See also \autoref{slope}. For $\tau\neq0$ we have shown why $c$ is geodesic. For $\tau=0$ we note $c^\prime=\cos(\alpha)E_2+\sin(\alpha)E_3$ and $\nabla_{E_j}E_k=0$ by \eqref{ektconnection} for all $j,k\in\{2,3\}$. This shows $\nabla_{c^\prime}c^\prime=0$.
	
	Our model is a metric Lie group and so a one-parameter family of isometries is $(\Phi_s)_{s\in\R}$ with $\Phi_s:=\Lcal_{c(s)}$. It satisfies $\Phi_s(0,0,0)=c(s)$ and we call it \emph{left translation along $c$}.
	
	For $\tau\neq0$ it is given by
	\begin{align}
	\Phi_s(x,y,z)=\Lcal_{(0,0,s)}(x,y,z)&=\left(e^{s\sqrt{-k}}x,2\tau\frac{e^{s\sqrt{-k}}-1}{\sqrt{-\kappa}}x+y,z+s\right)\nonumber\\
	&\overset{\kappa\to0}{\to}\left(x,2\tau sx+y,z+s\right),\label{translationekt}
	\end{align}
	and the infinitesimal generator or Killing field of $(\Phi_s)_{s\in\R}$ at $(x,y,z)\in \Ekt$ is 
	\begin{equation}
	K_{(x,y,z)}=\frac{d}{ds}\Big\vert_{s=0}\Phi_s(x,y,z)=x\sqrt{-\kappa}\,e^{-z\sqrt{-\kappa}}E_1+2\tau xe^{-z\sqrt{-\kappa}}E_2+E_3.\label{killingekt}
	\end{equation}
	For $\tau=0$ we have
	\begin{align}
	\Phi_s(x,y,z)&=\Lcal_{(0,s\cdot\cos(\alpha),s\cdot \sin(\alpha))}(x,y,z)\nonumber\\
	&=\left(e^{s\cdot \sin(\alpha)\sqrt{-\kappa}}x,y+s\cdot\cos(\alpha),z+s\cdot\sin(\alpha)\right)\label{translationek}
	\end{align}
	and the corresponding Killing field is
	\begin{equation}K_{(x,y,z)}=\frac{d}{ds}\Big\vert_{s=0}\Phi_s(x,y,z)=x\sin(\alpha)\sqrt{-\kappa}e^{-z\sqrt{-\kappa}}E_1+\cos(\alpha)E_2+\sin(\alpha)E_3.\label{killingek}\end{equation}
	In both cases we observe $K$ is independent of $y$.
	
	\subsubsection{Foliation by vertical planes transversal to \texorpdfstring{$c$}{c}}\label{foliation}
	Let $c$ be as in \eqref{ektgeodesic}, that is, $c$ is a geodesic in $\Ekt$ with slope $\alpha\in(0,\pi)$ whose projection $\tilde{c}:=\Pi\circ c$ in $(\R^2,\tilde{g})$ is also geodesic. We refer to \eqref{ektbase} for the definition of $\tilde{g}$. Then there is a foliation of $(\R^2,\tilde{g})$ by geodesics $(\tilde{\beta}_s)_{s\in\R}$ perpendicular to $\tilde{c}$ such that $\tilde{\beta}_s(0)=\tilde{c}(s)$ for all $s\in\R$. Therefore the vertical planes $P_s:=\Pi^{-1}\big(\tilde{\beta}_s\big)$ foliate $\Ekt$. As $\alpha\in(0,\pi)$, the geodesic $c$ is not vertical and thus meets each $P_s$ transversally. The left translations along $c$ satisfy $\Phi_s(0,0,0)=c(s)$, so that the foliation $(P_s)_{s\in\R}$ is invariant under $(\Phi_s)_{s\in\R}$. Moreover, we have $\Phi_s(P_0)=P_s$.
	
	In \autoref{chapter23} we will need these planes explicitly. For that purpose it is sufficient to exhibit the geodesic $\tilde{\beta}_0$ in $(\R^2,\tilde{g})$ perpendicular to 
	\[
	\tilde{c}(s)=(\Pi\circ c)(s)=\begin{cases}(0,s)&\mbox{if }\tau\neq0,\\
	(0,s\cdot \sin(\alpha))&\mbox{if }\tau=0.
	\end{cases}
	\]
	Compare with \eqref{ektgeodesic} and recall that $\Pi(x,y,z)=(x,z)$.
	\begin{Prop}Consider
		\begin{equation}
		\tilde{\beta}_0\colon\R\to\R^2,\qquad \tilde{\beta}_0(t):=\begin{cases}\left(\frac{\tanh(t\sqrt{-\kappa})}{\sqrt{-\kappa}}, \frac{\log(\sech(t\sqrt{-\kappa}))}{\sqrt{-\kappa}}\right)&\text{for }\kappa<0,\\
		(t,0)&\text{for }\kappa=0.
		\end{cases}\label{geodpar}
		\end{equation}
		Then $\tilde{\beta}_0(t)$ is a continuous function of $\kappa$: For each $t\in\R$ the limit of $\tilde{\beta}_0(t)$ for $\kappa<0$ and $\kappa\to0$ exists and equals $(t,0)$. Moreover, $\tilde{\beta}_0$ is a unit-speed geodesic in $(\R^2,\tilde{g})$, where $\tilde{g}$, as defined in \eqref{ektbase}, is the metric induced by $\Pi\colon \R^2\ltimes_{A(\kappa,\tau)}\R\to\R^2$. Each horizontal lift $\beta$ of $\tilde{\beta}_0$ satisfies
		\begin{equation}
		\beta^\prime(t)=\sech\big(t\sqrt{-\kappa}\big)E_1-\tanh\big(t\sqrt{-\kappa}\big)E_3.\label{geodder}
		\end{equation}
	\end{Prop}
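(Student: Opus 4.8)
The strategy is to verify the three asserted claims about $\tilde\gamma_0$ in turn, using only the explicit formula \eqref{geodpar}, the induced metric $\tilde g_{(x,z)}=e^{-2z\sqrt{-\kappa}}dx^2+dz^2$, and the fact that $\Pi$ is a Riemannian submersion with geodesic fibers. First I would treat the continuity of $\tilde\gamma_0$ in $\kappa$: for fixed $t$, apply the Taylor expansions $\tanh(u)=u+O(u^3)$ and $\log(\operatorname{sech}u)=-\tfrac12 u^2+O(u^4)$ with $u=t\sqrt{-\kappa}\to0$. Dividing by $\sqrt{-\kappa}$ gives $\tfrac{\tanh(t\sqrt{-\kappa})}{\sqrt{-\kappa}}\to t$ and $\tfrac{\log(\operatorname{sech}(t\sqrt{-\kappa}))}{\sqrt{-\kappa}}\to 0$, so the $\kappa<0$ branch limits onto the $\kappa=0$ branch $(t,0)$. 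This is routine.

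Next I would check that $\tilde\gamma_0$ is a unit-speed geodesic in $(\R^2,\tilde g)$. For the unit-speed property, with $x(t)=\tfrac{\tanh(t\sqrt{-\kappa})}{\sqrt{-\kappa}}$ and $z(t)=\tfrac{\log(\operatorname{sech}(t\sqrt{-\kappa}))}{\sqrt{-\kappa}}$ one has $x'(t)=\operatorname{sech}^2(t\sqrt{-\kappa})$, $z'(t)=-\tanh(t\sqrt{-\kappa})$, and $e^{-2z\sqrt{-\kappa}}=e^{-2\log\operatorname{sech}(t\sqrt{-\kappa})}=\cosh^2(t\sqrt{-\kappa})$, so $e^{-2z\sqrt{-\kappa}}(x')^2+(z')^2=\cosh^2\cdot\operatorname{sech}^4+\tanh^2=\operatorname{sech}^2+\tanh^2=1$. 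For the geodesic property there are two equivalent routes: either write down the two Euler--Lagrange equations for the Lagrangian $L=\tfrac12(e^{-2z\sqrt{-\kappa}}\dot x^2+\dot z^2)$ and substitute (the $x$-equation gives conservation of $e^{-2z\sqrt{-\kappa}}\dot x=\operatorname{sech}^2\cdot\cosh^2=1$ along the curve, which together with unit speed pins down $\dot z$ and hence the curve), or, cleaner, observe that $(\R^2,\tilde g)$ is the standard hyperbolic plane of curvature $\kappa$ in horocyclic-type coordinates and identify $\tilde\gamma_0$ as one of its geodesics through $(0,0)$; I would prefer the first, self-contained route. The $\kappa=0$ case is the straight line $(t,0)$ in the flat plane, which is trivially a unit-speed geodesic, and it is the $\kappa\to0$ limit by the first step.

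Finally, for \eqref{geodder} I would compute a horizontal lift explicitly. A curve $\gamma(t)=(x(t),y(t),z(t))$ lifting $\tilde\gamma_0$ has $(x(t),z(t))=\tilde\gamma_0(t)$ and is horizontal iff $\gamma'(t)$ is $\tilde g$-orthogonal to the vertical direction $E_2=\partial_y$, equivalently iff $\langle\gamma',E_2\rangle=0$; writing $\gamma'=x'\partial_x+y'\partial_y+z'\partial_z$ and using the metric \eqref{metricekt}, this is a first-order linear ODE determining $y'(t)$ in terms of $x'(t)$. Expanding $\gamma'$ in the orthonormal frame $(E_1,E_2,E_3)$ and imposing that the $E_2$-component vanish, one reads off $\gamma'=x'e^{-z\sqrt{-\kappa}}E_1+z'E_3$ (the $\partial_y$-coefficient of $E_1$ being cancelled by the chosen $y'$), and then $x'e^{-z\sqrt{-\kappa}}=\operatorname{sech}^2(t\sqrt{-\kappa})\cdot\cosh(t\sqrt{-\kappa})=\operatorname{sech}(t\sqrt{-\kappa})$ while $z'=-\tanh(t\sqrt{-\kappa})$, giving exactly \eqref{geodder}. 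The case $\kappa=0$ follows by the limit $\operatorname{sech}0=1$, $\tanh 0=0$, so $\gamma'=E_1$, consistent with $c(s)=(0,0,s)$ being horizontal along $E_1$-type directions.

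The only mildly delicate point — the \emph{main obstacle} — is the geodesic verification: one must be careful that the chosen lift-independent statement "$\tilde\gamma_0$ is a geodesic of $\tilde g$" really follows from the Euler--Lagrange equations and is not merely a reparametrized geodesic; unit speed (checked above) removes that ambiguity. Everything else is substitution of hyperbolic-function identities, and the $\kappa\to0$ continuity is handled once and then reused to extend all formulas to $\kappa=0$ by continuity, so no separate computation in that case is needed beyond noting the limits $\operatorname{sech}0=1,\ \tanh0=0$.
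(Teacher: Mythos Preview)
Your plan is correct and complete; all three claims are handled properly, and the hyperbolic-function computations you sketch (unit speed, the frame expansion of the lift) check out.

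The one genuine difference from the paper is in how the geodesic property of $\tilde\gamma_0$ is established. You opt for a direct, self-contained Euler--Lagrange argument in the metric $\tilde g=e^{-2z\sqrt{-\kappa}}dx^2+dz^2$: the $x$-equation gives the conserved momentum $e^{-2z\sqrt{-\kappa}}\dot x\equiv1$, and together with unit speed this determines the curve (you should note, for completeness, that the remaining $z$-equation then follows automatically from constancy of $|\dot\gamma|^2$). The paper instead writes down an explicit isometry $\varphi(x,z)=(x\sqrt{-\kappa},e^{z\sqrt{-\kappa}})$ from $(\R^2,\tilde g)$ to the upper half-plane model of curvature $\kappa$ and pulls back the standard unit semicircle geodesic $t\mapsto(\tanh(t\sqrt{-\kappa}),\operatorname{sech}(t\sqrt{-\kappa}))$. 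Your route avoids introducing an auxiliary model and is more elementary; the paper's route is shorter once one recognizes the half-plane, and explains \emph{why} the particular combination of $\tanh$ and $\log\operatorname{sech}$ appears. For the horizontal lift the paper takes the reverse tack from yours: rather than solving for $y'$, it simply verifies that the right-hand side of \eqref{geodder} is horizontal, satisfies $\nabla_vv=0$, and projects to $\tilde\gamma_0'$. Both approaches are equally valid here.
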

	

	\begin{proof}[Sketch of proof]The claim about the continuity of $\tilde{\beta}_0(t)$ is clear. Let us recall the metric $\tilde{g}$ on $\R^2$:
		\begin{equation*}
		\tilde{g}_{(x,z)}=e^{-2z\sqrt{-\kappa}}dx^2+dz^2.
		\end{equation*}
		
		For $\kappa=0$ we have $\tilde{\beta}_0(t)=(t,0)$ and the metric induced on $\R^2$ is the Euclidean one, so that $\tilde{\beta}_0$ is geodesic.
		
		For $\kappa<0$ we consider the upper half-plane $U:=\{(u,v)\colon v>0\}$ and note that \[g_{(u,v)}:=\frac{1}{-\kappa v^2}\langle\cdot,\cdot\rangle_{\R^2}\]
		defines a metric of constant sectional curvature $\kappa$ on $U$. Then
		\[
		\R\to U,\qquad t\mapsto \big(\tanh\big(t\sqrt{-\kappa}\big),\sech\big(t\sqrt{-\kappa}\big)\big)
		\]
		parametrises a unit-speed geodesic semi-circle through $(0,1)$.
		One can check that
		\[
		\varphi\colon\left(\R^2,\tilde{g}\right)\to\left(U,g\right),\qquad \varphi(x,z):=\left(x\sqrt{-\kappa},e^{z\sqrt{-\kappa}}\right)
		\]
		is an isometry with 
		\[
		\varphi^{-1}\colon\left(U,g\right)\to\left(\R^2,\tilde{g}\right),\qquad\varphi^{-1}(u,v)=\left(\frac{u}{\sqrt{-\kappa}},\frac{\log(v)}{\sqrt{-\kappa}}\right).
		\]
		Applying $\varphi^{-1}$ to the geodesic in $U$ proves the claim about $\tilde{\beta}_0$.
		Regarding the horizontal lift $\beta$ we observe the following for $v:=\sech\big(t\sqrt{-\kappa}\big)E_1-\tanh\big(t\sqrt{-\kappa}\big)E_3$:
		\begin{itemize}
			\item $v$ is horizontal,
			\item $\nabla_vv\equiv0$ and
			\item $d\Pi\, v\equiv \tilde{\beta}_0^\prime$.
		\end{itemize}
		This completes the proof.\end{proof}
	
	\subsubsection{Commutator relations of left translations along \texorpdfstring{$c$}{c}}
	The family $(\Phi_s)_{s\in\R}$ of left translations along a geodesic $c$ whose projection is also a geodesic commutes in the following way with other isometries:
	
	\begin{Prop}\label{propcommut}Let $c$ be as in \eqref{ektgeodesic}, that is, $c$ is a geodesic with slope $\alpha\in(0,\pi)$ in $\Ekt$ and its projection $\tilde{c}=\Pi\circ c$ in $(\R^2,\tilde{g})$ is geodesic. Let $\Phi_s=\Lcal_{c(s)}$ be the left translation along $c$.
		\begin{enumerate}[(a)]
			\item For $\tau=0$ reflection $\sigma$ through the vertical plane $\Pi^{-1}\left(\tilde{c}\right)$ commutes with $(\Phi_s)_{s\in\R}$, i.e., $\Phi_s\circ\sigma=\sigma\circ\Phi_s$.
			
			\item For $\tau\neq0$ the geodesic $c$ is horizontal and the half-turn rotation $\rho_c$ about $c$ commutes with $(\Phi_s)_{s\in\R}$, that is, $\rho_c\circ\Phi_s=\Phi_s\circ\rho_c$.
			
			\item For any $\tau\in\R$ we have the following:
			\begin{itemize}
				\item Vertical translations $(T_t)_{t\in\R}$, see \eqref{ektvtranslation}, and $(\Phi_s)_{s\in\R}$ commute: for $s,t\in\R$ we have $T_t\circ\Phi_s=\Phi_s\circ T_t$.
				
				\item The horizontal lift $\beta$ of $\tilde{\beta}_0$ with $\beta(0)=c(0)$ is a horizontal geodesic and the rotation of angle $\pi$ about $\beta$, denoted by $\rho_0$, satisfies $\rho_0\circ\Phi_s=\Phi_{-s}\circ\rho_0$.
			\end{itemize}
		\end{enumerate}
	\end{Prop}
	\begin{proof}
		
		(a): This relation is easily verified by noting that $\sigma(x,y,z)=(-x,y,z)$ is the reflection through $\Pi^{-1}\left(\tilde{c}\right)$. Looking at \eqref{translationek}, we see $\Phi_s\circ\sigma=\sigma\circ\Phi_s$.
		
		(b): Since $\Ekt=\R^2\ltimes_{A(\kappa,\tau)}\R$ is a metric semi-direct product, the half-turn rotation $\rho_c$ about $c(s)=(0,0,s)$ is given by $\rho_c(x,y,z)=(-x,-y,z)$. We refer to \cite[Section 2.3.]{MP} for this explicit expression. Using this and \eqref{translationekt} the commutator relation follows readily.
		
		(c): The claim about vertical translations commuting with left translations along $c$ follows directly from the explicit formulae \eqref{ektvtranslation}, \eqref{translationekt} and \eqref{translationek}.
		
		For the last claim we argue differently, because an explicit computation of $\rho_0$ is possible but tedious for $\kappa<0$. We make use of the following fact: An isometry of a connected Riemannian manifold is uniquely determined by its differential at one point.
		
		The curves $\beta$ and $c$ intersect at $(0,0,0)$, so that the half-turn rotation $\rho_0$ about $\beta$ satisfies $\rho_0(c(s))=c(-s)$. Thus we have \[(\rho_0\circ\Phi_s\circ\rho_0^{-1})(0,0,0)=\rho_0(\Phi_s(0,0,0))=\rho_0(c(s))=c(-s)=\Phi_{-s}(0,0,0)\]
		It remains to show that the differentials at $(0,0,0)$ are equal. Since $\rho_0$ fixes $\beta$, we introduce the following left invariant frame:
		\begin{align*}
		F_1&:=\sech(t\sqrt{-\kappa})E_1-\tanh(t\sqrt{-\kappa})E_3\overset{\eqref{geodder}}{=}\beta^\prime(t),\\
		F_2&:=\tanh(t\sqrt{-\kappa})E_1+\sech(t\sqrt{-\kappa})E_3,\\
		F_3&:=E_2.	
		\end{align*}
		Each $F_j$ is left invariant, so we have $d\Phi_s\vert_{(0,0,0)}(F_j)=F_j$ and $d\Phi_{-s}\vert_{(0,0,0)}(F_j)=F_j$ for $j\in\{1,2,3\}$. The half-turn rotation $\rho_0=\rho_0^{-1}$ about $\beta$ satisfies \[d\rho_0\vert_p(F_1)=F_1,\quad d\rho_0\vert_p(F_2)=-F_2\quad\mbox{and}\quad d\rho_0\vert_p(F_3)=-F_3.\] 
		This implies $d(\rho_0\circ\Phi_s\circ\rho_0)\vert_{(0,0,0)}=d\Phi_{-s}\vert_{(0,0,0)}$ and the claim follows.	
	\end{proof}

	\subsection{Constant mean curvature spheres}\label{ektspheres}
	
	The study of \mcH-surfaces, and in particular of \mcH-spheres, in $E(\kappa,\tau)$-spaces or a more general homogeneous three-manifold $X$ depends on the \emph{critical mean curvature}. We define it first as in \cite[Definition 1.1]{MeeksCheeger}:
	
	\begin{Def}Let $X$ be a simply connected non-compact homogeneous three-manifold and let $\mathcal{A}$ be the collection of all compact, immersed orientable surfaces in $X$. For a given surface $\Sigma\in\mathcal{A}$ let $\lvert H_\Sigma\rvert\colon\Sigma\to [0,\infty)$ stand for the absolute mean curvature function of $\Sigma$. The \emph{critical mean curvature} of $X$ is defined as
		\begin{equation}
		H(X):=\inf\left\{\max_{\Sigma}\lvert H_\Sigma\rvert\colon\Sigma\in\mathcal{A}\right\}.\label{critical}
		\end{equation}

	\end{Def}
	For the homogeneous three-manifolds occurring in this paper, the critical mean curvature is
	\[
	H(X)=\begin{cases}
	\frac{\sqrt{-\kappa}}{2}&\mbox{if }X=\Ekt\mbox{ with }\kappa\leq0,\\
	0&\mbox{if }X=\Sol.
	\end{cases}
	\]
	This follows from \cite[Theorem 3.32]{MP}.
	The behaviour of \mcH-surfaces with $H=0$, $0<H<H(X)$, $H=H(X)$ and $H>H(X)$ is very different. We refer to \cite{abresch2004}, \cite{Abresch}, \cite{Daniel} and \cite{Notes2009} for various results in that regard.
	
	\mcH-spheres $S_H$ exist for all $H>H(E)$ in a non-compact $E(\kappa,\tau)$-space $E$ and they have the following properties: 
	\begin{enumerate}[1.]
		\item A sphere $S_H$ is embedded and unique up to isometries.
		\item Any sphere has a centre $p$, that is, we have $S_H=S_H(p)$.
		\item  Each ambient isometry fixing the centre leaves $S_H(p)$ invariant. In particular, $S_H(p)$ is invariant by rotations about the fibre through $p$.
	\end{enumerate}
	We refer to \cite{MeeksTransversality} and \cite{Meeks2017} for these properties. In the following, we are not going to need any additional properties as it was the case in $\Sol$ (see \autoref{solsphere}). The reason is that for $\tau\neq0$ we do not have a two-dimensional subgroup orthogonal to the fibres in $\Ekt$. 
	
	We note that some properties of constant mean curvature spheres in $E(\kappa,\tau)$-spaces follow from \cite[Theorem 6]{Abresch}. This shows that any immersed constant mean curvature sphere in a non-compact $E(\kappa,\tau)$-space is a rotational sphere, which also implies uniqueness up to isometries. Explicit examples of rotationally invariant \mcH-spheres in $\mathbb{H}^2\times\R$, $\mathbb{S}^2\times\R$, $\Nil_3$ and $\widetilde{\PSL}_2(\R)$ can be found in the following papers: \cite{Hsiang, Onnis, SaEarp,  FMP, Penafiel}.
	
	The assumption on $E$ being non-compact is crucial for embeddedness. Examples of non-embedded rotationally invariant \mcH-spheres in the Berger spheres have been found by \cite{Torralbo}.
	

	
	\section{Invariant \texorpdfstring{\mcH}{mcH}-cylinders whose axis is a geodesic with geodesic projection in \texorpdfstring{$E(\kappa,\tau)$}{E(k,t)}-spaces with \texorpdfstring{$\kappa\leq0$}{k<=0}}\label{chapter22}
	In this section we work in $E(\kappa,\tau)$-spaces with $\kappa\leq0$. As a model, we use $\Ekt$ as introduced in \autoref{sec42}. For a geodesic $c$ whose projection is also a geodesic, see \eqref{ektgeodesic}, we consider the left translation $\Phi_s=\Lcal_{c(s)}$ along $c$ and study $(\Phi_s)_{s\in\R}$-invariant \mcH-surfaces in $\Ekt$. We carry over the arguments used in \autoref{chapter12} of the \hyperref[chapter1]{first part}:
	\begin{itemize}
		\item As in case of $\Sol$, we consider $(\Phi_s)_{s\in\R}$-invariant surfaces whose generating curves are graphical. They will be vertical graphs.
		
		\item The geometric discussion of the ODE for the graphical solution and its extension to a simple closed embedded curve carry over from $\Sol$ almost literally except for one argument, so that we only state what is different.
	\end{itemize}

	\subsection{ODE for \texorpdfstring{\mcH}{mcH}-surfaces invariant under left translation along \texorpdfstring{$c$}{c}}
	We recall some notation from \autoref{sec42}. Let $c$ be as in \eqref{ektgeodesic}, that is, it is a geodesic with slope $\alpha\in(0,\pi)$ in $\Ekt$ whose projection $\tilde{c}=\Pi\circ c$ in $\R^2$ is also a geodesic. Through each point $\tilde{c}(s)$ there is a geodesic $\tilde{\beta}_s$ perpendicular to $\tilde{c}$. These geodesics foliate $\R^2$ and thus
	the vertical planes $(P_s)_{s\in\R}$ defined by $P_s=\Pi^{-1}\big(\tilde{\beta}_s\big)$ foliate $\Ekt$. Moreover, this foliation is transversal to $c$ and it is preserved by left translation $\Phi_s$ along $c$. 
	
	Finally, let $\beta$ be the horizontal lift of $\tilde{\beta}_0$ such that $\beta(0)=c(0)$. We also recall the vertical translation $(T_s)_{s\in\R}$, see \eqref{ektvtranslation}. For $\mathcal{C}^2$-functions $x,y\colon J\to\R$ consider the unit-speed curve 
	\[\gamma\colon J\to \Ekt\mbox{,}\qquad \gamma(t):=T_{y(t)}\big(\beta(x(t))\big),\] which is contained in the vertical plane $P_0$. A surface invariant by left translation along $c$ is parametrised by
	\begin{equation}f\colon \R\times J\to \Ekt,\qquad f(s,t):=\Phi_s(\gamma(t))=\Phi_s(T_{y(t)}(\beta(x(t)))).\label{surfaceekt}\end{equation}
	We specialise to $x(t)=t$ and $h(t)=y(t)$, that is, we are considering vertical graphs over $\beta$. For these vertical graphs over $\beta$ we study the ODE for constant mean curvature:
	\begin{Prop}\label{odeekt}Let $H\in\R$.
		There exists a smooth function $F\colon\R^2\to\R$ such that the invariant surface
		\[
		f\colon\R\times J\to \Ekt,\qquad f(s,t):=\Phi_s\big(T_{h(t)}(\beta(t))\big),\qquad\text{where }h\in\mathcal{C}^2(J,\R),
		\]
		has constant mean curvature $H$ with respect to the inner normal if and only if 
		\begin{equation}h^{\prime\prime}(t)=F(t,h^\prime(t))\qquad \text{for all t }\in J.\label{odepropekt}\end{equation}
	\end{Prop}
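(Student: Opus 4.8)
The plan is to mirror the proof of \autoref{odesol2}(a) almost verbatim, the only structural difference being that $\gamma$ is a fixed geodesic (depending only on $t$) rather than a coordinate line, so the invariant surface depends on the single unknown function $h$ and the resulting ODE coefficients will turn out to be independent of $h(t)$ itself, only depending on $t$ and $h'(t)$. First I would set $v_1:=\partial_s f$ and $v_2:=\partial_t f$, write $N$ for the upper unit normal, and record the first and second fundamental form coefficients $g_{ij}=\langle v_i,v_j\rangle$, $b_{ij}=\langle\nabla_{v_i}v_j,N\rangle$, so that
\[
H=\frac{b_{11}g^{11}+2b_{12}g^{12}+b_{22}g^{22}}{2}.
\]
Using \eqref{geodder} and the facts that $\gamma$ is a horizontal geodesic ($\nabla_{\gamma'}\gamma'\equiv0$) and $\xi=E_2$ is a unit Killing field with $\nabla_{E_2}E_2=0$, I would compute $v_2=\partial_t f = d\Phi_s\big(\gamma'(t)+h'(t)E_2\big)$ and hence
\[
\nabla_{v_2}v_2 = d\Phi_s\Big(\underbrace{\nabla_{\gamma'+h'E_2}\big(\gamma'+h'E_2\big)-h''E_2}_{=:w}\Big)+h''\,v_2^{\mathrm{vert}},
\]
so that only the $b_{22}$-term carries $h''$, exactly as in \autoref{odesol2}.

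Next I would isolate the $h''$-dependence: by Cramer's rule $g^{22}=g_{11}/\det g$, and
\[
\frac{b_{22}g^{22}}{2}=\frac{\big(h''\langle N,\partial_y\rangle+\langle w,N\rangle\big)g_{11}}{2\det g},
\]
where the key point is that $f$ is a Killing graph over $\gamma\times\R$ with respect to the vertical Killing field $\xi=E_2=\partial_y$; hence $\langle N,\partial_y\rangle>0$ since $N$ is the upper normal, and $g_{11}=\langle v_1,v_1\rangle>0$ because the Killing field $K$ of $\Gamma$ is nowhere zero (visible from \eqref{killingekt}, whose $E_3$-component is $1$). Therefore the equation $H=\mathrm{const}$ is solvable for $h''$, giving a function $h''=F(\cdot)$ whose smoothness follows from smoothness of each $\Phi_s$ and of $g,b$; it is defined on all relevant arguments because $h$ may be prescribed arbitrarily. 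To see that $F$ does not depend on $h(t)$, I would invoke that vertical translations $T_\sigma$ are isometries commuting with $\Gamma$ (\autoref{propcommut}(c)); replacing $h$ by $h+\sigma$ replaces $f$ by $T_\sigma\circ f$, an isometry, so the mean curvature and hence $F$ are invariant under $\xi\mapsto\xi+\sigma$, proving $F=F(t,h'(t))$. This also explains why the domain of $F$ is $\R^2$ rather than $\R^3$.

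The main obstacle — really the only nontrivial point — is verifying the positivity $\langle N,\partial_y\rangle>0$, i.e. that the vertical Killing field $\xi=E_2$ is genuinely transverse to the surface so that $f$ is an honest Killing graph. In $\Sol$ this was immediate from the explicit form of the metric; here I would argue that $v_1=d\Phi_s(K)$ and $v_2=d\Phi_s(\gamma'+h'E_2)$ span a plane whose $\Phi_s$-preimage is $\mathrm{span}\{K,\gamma',E_2\}\cap(\text{tangent to }f)$, and since $K$ has nonzero $E_3$-component while $\gamma'$ lies in $\mathrm{span}\{E_1,E_3\}$, the vertical direction $E_2$ is not in $\mathrm{span}\{K,\gamma'\}$ unless $K$ and $\gamma'$ are parallel, which the $E_3$-components preclude; thus $E_2$ is transverse to the tangent plane at every point, giving $\langle N,E_2\rangle\neq0$, and continuity plus the upper-normal convention fixes the sign. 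With this in hand the rest is the routine linear-algebra manipulation above, and the statement follows. \qedhere
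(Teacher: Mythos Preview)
Your proof is correct and follows essentially the same approach as the paper's: set up the mean curvature formula, isolate the $h''$-dependence in $b_{22}$, use the Killing-graph property to ensure $\langle N,\xi\rangle>0$ and $g_{11}>0$ so the implicit equation can be solved for $h''$, and invoke commutation of vertical translations with $\Gamma$ to eliminate the $h(t)$-dependence. The only slip is notational---the $h''$-term in $\nabla_{v_2}v_2$ should be $h''\,d\Phi_s(\xi)$ rather than $h''\,v_2^{\mathrm{vert}}$---but you recover the correct coefficient $\langle N,\partial_y\rangle$ in the next display, so the argument is unaffected.
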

	
	\begin{proof}Let $v_1:=\partial_sf$ and $v_2:=\partial_tf$. We denote the inner normal to $f$ by $N$, so that $g_{ij}:=\langle v_i,v_j\rangle$ and $b_{ij}:=\langle \nabla_{v_i}v_j,N\rangle$ for $i,j\in\{1,2\}$ are the coefficients of the first and second fundamental form. Then the mean curvature of $f$ is given by
		\[
		H=\frac{b_{11}g^{11}+2b_{12}g^{12}+b_{22}g^{22}}{2}.
		\]
		Here we note that $H$ depends on $t$, $h^\prime(t)$ and $h^{\prime\prime}(t)$, but not on $h(t)$ itself. This is due to the existence of vertical translations $(T_s)_{s\in\R}$ commuting with $(\Phi_s)_{s\in\R}$; see \autoref{propcommut} (c).
		
		We assume $H$ to be constant and therefore get an implicit differential equation depending on $h^\prime(t)$ and $h^{\prime\prime}(t)$. Now we want to show we can solve this implicit equation for $h^{\prime\prime}(t)$. We have
		\[
		v_2=\beta^\prime+h^\prime\xi\quad\text{and}\quad\nabla_{v_2}v_2=\underbrace{\nabla_{\xi}\beta^\prime+h^\prime\nabla_{\beta^\prime}\xi}_{=:w}+h^{\prime\prime}\xi.
		\]
		We obviously have $w=w(t,h^\prime(t))$ and so the only term containing $h^{\prime\prime}(t)$ is 
		\[
		\frac{b_{22}g_{11}}{2\det(g)}= \frac{\langle \nabla_{v_2}v_2,N\rangle g_{11}}{2\det(g)}=\frac{\left(h^{\prime\prime}\langle N,\xi\rangle+\langle w,N\rangle\right)g_{11}}{2\det(g)}.
		\]
		The surface $f$ is a Killing graph with respect to the Killing field $\xi$, so that $\langle N,\xi\rangle$ is positive for $N$ is the inner normal. We also have $g_{11}>0$ since $(\Phi_s)_{s\in\R}$ does never act trivially. Hence we can solve the implicit equation for $h^{\prime\prime}$ and obtain a function $F\colon\R^2\to\R$ with ${h^{\prime\prime}(t)=F(t,h^\prime(t))}$. This function $F$ is smooth because each $\Phi_s$ is smooth and thus are $g$ and $b$. It is defined on whole $\R^2$ because we can prescribe any kind of interval $J$ and function $h\colon J\to\R$.
	\end{proof}
	
	\subsection{Geometric discussion of the ODE: Half-cylinder solution and its extension to an embedded \texorpdfstring{\mcH}{mcH}-cylinder with axis \texorpdfstring{$c$}{c}}
	The following proposition corresponds to \autoref{lemode}:
	
	\begin{Prop}\label{lemodeekt}Given $a\in\R$ and $H>H(\Ekt)$, the Picard-Lindel\"of Theorem gives a unique maximal solution $h\colon I_{\max}\to\R$ with $h(0)=a$ and $h^\prime(0)=0$ satisfying the ODE \eqref{odepropekt}. For each $a\in\R$ it has the following properties:
		\begin{enumerate}[(a)]
			\item \emph{[Symmetry]:} There is $R=R(a)\in(0,\infty]$ such that we have $I_{\max}=(-R,R)$ and $h(t)=h(-t)$ for all $t\in(-R,R)$.
			
			\item \emph{[Bounded existence interval]:} We have $R(a)<\infty$.
			
			\item \emph{[Bounded vertical graph]:} There is $K=K(a)>0$ such that $\lvert h(t)\rvert\leq K$ for all $t\in[-R,R]$ where $h(\pm R):=\lim_{t\to\pm R}h(t)$.
		\end{enumerate}
		
		
		
		
	\end{Prop}
	
	\begin{proof}We consider $c$ as in \eqref{ektgeodesic}. The surface $f(s,t)=\Phi_s(T_{h(t)}(\beta(t)))$ invariant by left translation $\Phi_s=\Lcal_{c(s)}$ along $c$ is generated by the curve $\gamma(t):=T_{h(t)}(\beta(t))$, which is a vertical graph over $\beta$. The curve $\beta$ satisfies $\beta(0)=c(0)$ and it is the horizontal lift of $\tilde{\beta}_0$ intersecting $\tilde{c}=\Pi\circ c$ orthogonally. Thus we have $\gamma^\prime=\beta^\prime+h^\prime\xi$. 
		
		(a): For $h^\prime(0)=0$ the tangent $\gamma^\prime(0)$ is horizontal since $\beta$ is horizontal. As the horizontal lift of a geodesic, $\beta$ is a geodesic itself. 
		
		If $\tau=0$, then reflection through the vertical plane $\Pi^{-1}\left(\tilde{c}\right)$ is an isometry. This reflection and $(\Phi_s)_{s\in\R}$ commute, see \autoref{propcommut} (a), and so the reflected graph satisfies the same ODE. Moreover the initial values are invariant. Hence the reflection leaves the solution invariant. 
		
		In case $\tau\neq0$ the curve $c$ is a horizontal geodesic. Let us translate the graph such that $a=0$. Applying the half-turn rotations $\rho_c$ about $c$ and $\rho_0$ about $\beta$ yields a graphical solution satisfying the same ODE (by \autoref{propcommut} (b) and (c)) and the initial values remain invariant under these rotations.
		
		In both cases we conclude that the existence interval and the solution $h$ are symmetric.
		
		(b) and (c): These properties follow as those in \autoref{lemode}. For (b) we move spheres by left translations along a fibre instead of a $y$-axis in $\Sol$. For (c) we move spheres by left translations along $T_v\circ\beta$ for some $v\in\R$ instead of an $x$-axis in $\Sol$.\end{proof}
	
	Recall that \eqref{odepropekt} does not depend on $h(t)$. In case of $\Sol$ we needed a height zero solution because the ODE \eqref{odesol2} depends also on $h(t)$. In the present situation, we simply apply a vertical translation to a symmetric solution from \autoref{lemodeekt} and obtain $h(\pm R)=0$. We still need to ensure $h(t)<0$ for all $t\in(-R,R)$.
	
	\begin{Lem}[Height zero half-cylinder solution]\label{ektzeroheight}
		There exists a unique $a_0<0$ such that the maximal solution of \eqref{odepropekt} with $h(0)=a_0$ and $h^\prime(0)=0$ has the following properties: We have $h(\pm R(a_0))=0$ and $h(t)<0$ for all $t\in(-R(a_0),R(a_0))$. Moreover, the solution satisfies $\lim_{t\to\pm R}h^\prime(t)=\pm\infty$.
	\end{Lem}
	\begin{proof}In the proof of the corresponding \autoref{zeroheight} in $\Sol$, we made use of specific properties of \mcH-spheres in $\Sol$. Namely that the minimal value on the $y$-coordinate is attained on the $y$-axis through the centre of a sphere. Since this property is not available in a general $\Ekt$, we need different surfaces for the comparison argument. 
		
		For $r\in\R$ we define a surface by
		\begin{equation}
		\varphi_r\colon\R\times\R\to\Ekt,\qquad \varphi_r(s,t):=\Phi_s(T_r(\beta(t))).\label{ektminimal}
		\end{equation}
		This surface is invariant by left translation $\Phi_s$ along $c$. We denote the image of $\varphi_r$ by $M_r$. Since vertical translation $T_r$ and $\Phi_s$ commute (by \autoref{propcommut} (c)), we see that $M_r=T_r(M_0)$ and thus the family $(M_r)_{r\in\R}$ is a foliation of $\Ekt$.
		
		Each surface $M_r$ is foliated by horizontal geodesics because $\beta$ is a horizontal geodesic.  Through each point of $M_r$ there is a horizontal geodesic $\beta_{s_0}\subseteq M_r$ passing through $c(s_0)$. In \autoref{propcommut} we denoted the half-turn rotation about $\beta$ by $\rho_0$. Let $\rho_{s_0}$ be the half-turn rotation about $\beta_{s_0}$. Due to $\beta_{s_0}=\Phi_{s_0}\circ\beta$ we have $\rho_{s_0}=\Phi_{s_0}\circ\rho_0\circ\Phi_{-s_0}$, so that $\rho_{s_0}$ and $(\Phi_s)_{s\in\R}$ satisfy $\rho_{s_0}\circ\Phi_s=\Phi_{-s}\circ\rho_{s_0}$ (compare with \autoref{propcommut} (c)). This implies the $(\Phi_s)_{s\in\R}$-invariant surface $M_r$ is invariant by the half-turn rotation $\rho_{s_0}$. Since this is true for any point on $M_r$, we see that $M_r$ is a minimal surface. It is embedded because it is a $(\Phi_s)_{s\in\R}$-invariant surface.
		
		For the actual proof of this lemma, let us denote the image of the $(\Phi_s)_{s\in\R}$-invariant \mcH-surface $f(s,t)=\Phi_s(T_{h(t)}(\beta(t)))$ by $\Sigma$. For more details regarding the notation, see the proof of \autoref{lemodeekt}. 
		
		\emph{First claim:} For $a\in\R$ we have $h(\pm R(a))\geq a$. Suppose this were false, that is, $h(\pm R(a))<a$. Then the boundary of $\Sigma$ is contained in $M_u$ for some $u< a$. There exists $v\in\R$ such that $M_v$ is contained in the mean convex side of $\Sigma$. Since the surfaces $(M_r)_{r\in\R}$ foliate $\Ekt$, we can move $M_v$ towards $\Sigma$. The boundary of $\Sigma$ is contained in $M_u$ for $u<a=h(0)$ so that we get a tangential intersection in the interior. See also \autoref{ektlemmafirstclaim} below. By the maximum principle this could only be possible for $H_\Sigma\leq0=H_{M_r}$, in contradiction to $H_\Sigma>H(\Ekt)>0$.
		
		\begin{figure}
			\begin{center}
				\includegraphics{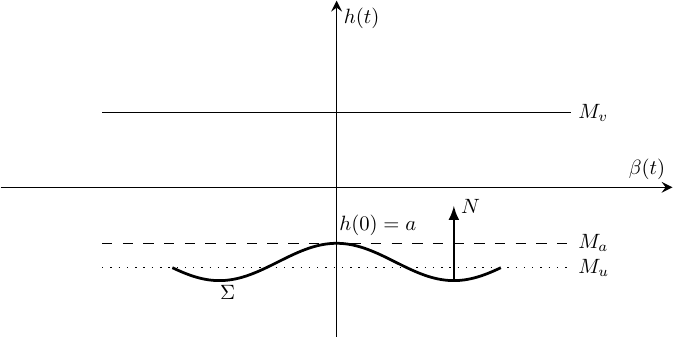}
			\end{center}
			\caption{\autoref{ektzeroheight}: comparison argument used in the ''First claim``}
			\label{ektlemmafirstclaim}
		\end{figure}

		\emph{Second claim:} For $a\in\R$ we have $h(\pm R(a))>a$. We argue by contradiction and in view of the previous claim we may assume $h(\pm R(a))=a$. Lets distinguish three cases:
		
		\begin{enumerate}[I.]
			\item There exists $t_0\in(-R(a),R(a))$ with $h(t_0)>a$. In this case we argue as in the first claim.
			
			\item There exists $t_0\in(-R(a),R(a))$ with $h(t_0)<a$. By symmetry of $h$ we can assume $t_0>0$. The restricted surface $\widetilde{\Sigma}:=f(\R,(-t_0,t_0))$ then has its boundary contained in $M_u$ for $u<a=h(0)$. With respect to $\widetilde{\Sigma}$, we are again in the situation from the first claim and obtain a contradiction.
			
			\item We have $h(t)=a$ for all $t\in(-R(a),R(a))$. Then $\Sigma$ is a piece of the minimal surface $M_a$, which contradicts $H>H(\Ekt)>0$.
		\end{enumerate}
		
		From these two claims we find, by applying a vertical translation, a unique $a_0<0$ such that $h(\pm R(a_0))=0$.
		
		\emph{Third claim:} For this $a_0$ we have $h(t)<0$ for all $t\in(-R(a_0),R(a_0))$. On the contrary, if it were false, there would exist $t_0>0$ (due to symmetry of $h$ and the previous claims) with $h(t_0)\geq 0$. The case $h(t_0)>0$ is ruled out as in the first claim. For $h(t_0)=0$ we distinguish the following three cases:
		\begin{enumerate}[I.]
			\item There exists $t_1\in(t_0,R(a_0))$ with $h(t_1)>0$, which is ruled out as before.
			
			\item If there exists $t_1\in(t_0,R(a_0))$ with $h(t_1)<0$, we consider the restricted surface $\widehat{\Sigma}:=f(\R,(-t_1,t_1))$. Its boundary is contained in $M_u$ for some $u<0=h(t_0)$. The comparison argument from the first claim is then applicable with respect to the surface $\widehat{\Sigma}$.
			
			\item The case $h(t)=0$ for all $t\in(t_0,R(a_0))$ cannot occur because this would be a piece of the minimal surface $M_0$.
		\end{enumerate}
		
		The claimed asymptotic behaviour now follows as in \autoref{zeroheight} (c).
	\end{proof}
	The solution from \autoref{ektzeroheight} generates a \emph{horizontal \mcH-cylinder} for all $\tau$ and an \mcH-cylinder with sloped axis for $\tau=0$, that is, the following main result includes tilted \mcH-cylinders in $\mathbb{H}^2\times\R$ and horizontal \mcH-cylinders in $\widetilde{\PSL}_2(\R)$:
	\begin{Theorem}\label{thmcylekt}We consider $\Ekt$ for $\kappa\leq0$ with fibration $\Pi\colon\Ekt\to(\R^2,\tilde{g})$. Let $\tilde{c}$ and $\tilde{\beta}$ be orthogonal geodesics in $(\R^2,\tilde{g})$ such that the geodesic $c$ in $\Ekt$ as in \eqref{ektgeodesic} is a geodesic whose $\Pi$-projection is $\tilde{c}$. Let $(\Phi_s)_{s\in\R }$ be the family of left translations along $c$. 
		
		For each $H>H(\Ekt)=\frac{\sqrt{-\kappa}}{2}$ there is a smooth embedded simple closed curve $\gamma$ in the vertical plane $\Pi^{-1}\big(\tilde{\beta}\big)$ which generates an embedded $(\Phi_s)_{s\in\R}$-invariant cylinder $f(s,t)=\Phi_s(\gamma(t))$ with constant mean curvature $H$.	
		
		For arbitrary $\tau$ the surface is invariant by a half-turn rotation about $\beta$, the horizontal lift of $\tilde{\beta}$ with $\beta(0)=c(0)$. If the axis $c$ is horizontal, the surface is invariant by a half-turn rotation about its axis $c$. For $\tau=0$ the surface has a vertical mirror plane containing the axis $c$.  
	\end{Theorem}\index{cylinder in $E(\kappa,\tau)$}
	
	\begin{proof}Let $h\colon (-R,R)\to\R$ be the solution from \autoref{ektzeroheight}. Thus the graph of $h$ meets $\beta$ orthogonally at $t=\pm R$. We extend the graph of $h$ by $\rho_0$ to a closed curve $\gamma$, where $\rho_0$ denotes the half-turn rotation about $\beta$. The curve $\gamma$ is smooth because of the graph's asymptotic behaviour and $\gamma$ is embedded because of $h(t)<0$ for all $t\in(-R,R)$. Due to $\rho_0\circ\Phi_s=\Phi_{-s}\circ\rho_0$ from \autoref{propcommut}, the curve $\gamma$ is generating a $(\Phi_s)$-invariant surface with constant mean curvature $H$. The surface is embedded because $\gamma$ is and due to the form of the left translations $(\Phi_s)_{s\in\R}$. The claimed symmetries follow from \autoref{propcommut} (a) and (b).\end{proof}
	
	\begin{Rem}\begin{enumerate}[1.]
			\item In his Ph.D. thesis, \cite{Penafiel} studied various invariant surfaces in the space $E(-1,\tau)=\widetilde{\PSL}_2(\R)$. A one-parameter family he considers is translation along a horizontal geodesic in $E(-1,\tau)$, corresponding to the left translations we considered in $\mathbb{E}(-1,\tau)$. He chose the upper-half plane model and the vertical plane containing the generating curve $\gamma$ is \[
			P_0=\{(\cos(\theta),\sin(\theta),h)\colon \theta\in(0,\pi)\mbox{ and }h\in\R\}.
			\]
			He considers graphs $h=h(\theta)$ generating an invariant \mcH-surface. A flux computation, see \cite[Lemma 8.1.2]{Penafiel}, yields the representation
			\[
			h(\theta)=\int \frac{(d-2H\cot(\theta))\sqrt{1+4\tau^2\cos^2(\theta)}}{\sqrt{1-\sin^2(\theta)(d-2H\cot(\theta))^2}}\,d\theta-2\tau\theta,
			\]
			where $d$ is a real number.
			
			For some values of $H$, $\tau$ and $d$, the integral can be computed explicitly. However, for $H>\frac{1}{2}$ and $\tau\neq0$ it seems that it has not been the case. With the help of Mathematica it is possible to represent $h$ in terms of elliptic integrals, though. 
			\item In $\mathbb{S}^2\times\R$ there exist also \mcH-cylinders with arbitrary geodesic axis. These occur as screw-motion surfaces and have been constructed by \cite{SaEarp}. 
			
			\item A complete classification of invariant \mcH-surfaces in $\mathbb{H}^2\times\R$ is known due to \cite{Onnis}. It includes the examples in $E(-1,0)$ from \autoref{thmcylekt}.	
		\end{enumerate}
	\end{Rem}
	
	\section{Horizontal diameter of an \texorpdfstring{\mcH}{mcH}-cylinder with horizontal axis}\label{chapter23}
	We constructed the \mcH-cylinders in $\Ekt$ for $\kappa\leq0$ from \autoref{thmcylekt} as follows: We fixed a geodesic $c$ in $\Pi\colon\Ekt\to\R^2$ whose projection $\tilde{c}=\Pi\circ c$ is a geodesic. The curve $\beta$ is the horizontal geodesic in $\Ekt$ with $\beta(0)=c(0)$ and whose $\Pi$-projection $\tilde{\beta}_0$ is orthogonal to $\tilde{c}$. For \mcH-surfaces invariant by left translation $\Phi_s$ along $c$ we obtained the height zero half-cylinder solution $h\colon(-R,R)\to\R$ from \autoref{ektzeroheight}. The function $h$ amounts to the curve
	\begin{equation}
	\Gamma\colon (-R,R)\to\Ekt,\qquad \Gamma(t):=T_{h(t)}(\beta(t)),\label{verticalgraph}
	\end{equation}
	where $T_s$ is the vertical translation \eqref{ektvtranslation} in $\Ekt$. This curve is a vertical graph with respect to the fibration $\Pi\colon\Ekt\to\R^2$. The $(\Phi_s)_{s\in\R}$-invariant surface generated by it is $f(s,t)=\Phi_s(\Gamma(t))$, compare with \eqref{surfaceekt}. We showed that $\Gamma$ can be extended to an embedded closed curve by a half-turn rotation about $\beta$.
	
	In this final section we compute the existence interval $I_{\max}=(-R,R)$ without calculating the actual ODE \eqref{odepropekt}. We apply a weight formula, that is, a flux computation, to achieve this. We only carry out the computation in case that $c$ is horizontal, but it works as well in the general case. Again, we are only working in $\Ekt$ for $\kappa\leq0$.
	
	It is convenient to parametrise the curve $\Gamma$ by arc-length:
	
	
	\begin{Lem}\label{lemweight}Let $H>H(\Ekt)$ and let $h\colon(-R,R)\to \R$ be the solution established in \autoref{ektzeroheight}. Then $\Gamma\colon(-R,R)\to\Ekt\mbox{, }\Gamma(t):=T_{h(t)}(\beta(t))$ is a curve, which is a vertical graph. A reparametrisation of $\Gamma$ by arc-length and with the same orientation gives a curve $\gamma\colon[0,L]\to \Ekt\mbox{, }\gamma=\beta\circ d+(0,e,0)$ for some $d,e\in\mathcal{C}^2([0,L],\R)$ with the following properties:
		\begin{itemize}
			\item $L$ is the arc-length of $\Gamma$ on $[0,R]$, that is $L=\int_0^R\sqrt{1+{h^\prime}^2(t)}\,dt$,
			\item $\gamma$ respects the initial values of the graph $\Gamma$, i.e., we have $\gamma(0)=(0,a_0,0)$ and $\gamma^\prime(0)=\beta^\prime(0)$, where $a_0$ is as in \autoref{ektzeroheight},
			\item $\gamma(L)=\beta(R)$ and $\gamma^\prime(L)=E_2$.
		\end{itemize}
		For the invariant surface $f\colon\R\times[0,L]\to E\mbox{, }f(s,t):=\Phi_s(\gamma(t))$, where $\Phi_s=\Lcal_{c(s)}$ and $c$ as in \eqref{ektgeodesic} is horizontal, i.e., has slope $\alpha=\pi/2$, the tangent vectors are
		\begin{align}
		v_1&:=\partial_sf=\sinh\Big(d(t)\sqrt{-\kappa}\Big)E_1+2\tau\sinh\Big(d(t)\sqrt{-\kappa}\Big)E_2+E_3\label{tangent1},\\
		v_2&:=\partial_tf=d^\prime(t)\sech\Big(d(t)\sqrt{-\kappa}\Big)E_1+e^\prime(t)E_2-d^\prime(t)\tanh\Big(d(t)\sqrt{-\kappa}\Big)E_3\label{tangent2}.
		\end{align}
	\end{Lem}
	
	\begin{proof}The claim about the reparametrisation is clear. For the tangent vector $v_1$ we have
		\[
		v_1=K_{\beta(d(t))+(0,e(t),0)}.
		\]
		Since the Killing field $K$ is independent of $y$ and $\beta$ is the horizontal lift of $\tilde{\beta}_0$, computed explicitly in \eqref{geodpar}, it suffices to insert \[x=\frac{\tanh(d(t)\sqrt{-\kappa})}{\sqrt{-\kappa}}\quad\mbox{ and }\quad z=\frac{\log(\sech(d(t)\sqrt{-\kappa}))}{\sqrt{-\kappa}}\] into $K_{(x,y,z)}$, given by \eqref{killingekt}, to show \eqref{tangent1}. For \eqref{tangent2} we note $v_2=d^\prime\beta^\prime\circ d+e^\prime E_2$ and refer to \eqref{geodder}.\end{proof}
	
	
	The horizontal diameter of a horizontal \mcH-cylinder in $\Ekt$ for $\kappa\leq0$ can be computed using the weight formula; it is independent of $\tau$.
	
	\begin{Theorem}\label{propweight}Let the geodesic $c$ as in \eqref{ektgeodesic} be horizontal. For the height zero solution $h\colon(-R,R)\to\R$ from \autoref{ektzeroheight} we then have
		\[R=\frac{1}{\sqrt{-\kappa}}\atanh\left(\frac{\sqrt{-\kappa}}{2H}\right),\]
		so that the \emph{horizontal diameter} of a horizontal \mcH-cylinder is $2R$. 
		
		The \mcH-cylinders with axis $c$, considered as a one-parameter family depending on $H\in(H(\Ekt),\infty)$, are unbounded for $H\to H(\Ekt)=\frac{\sqrt{-\kappa}}{2}$ and converge to the horizontal geodesic $c$ for the limit $H\to\infty$.\end{Theorem}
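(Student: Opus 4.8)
The plan is to use the \emph{weight formula} (a flux identity) for a Killing field on a constant mean curvature surface. If $\Sigma$ has constant mean curvature $H$ with unit normal $N$ and $Y$ is a Killing field of the ambient space, then $\operatorname{div}_\Sigma Y^{\top}=2H\langle Y,N\rangle$; integrating over a compact subdomain $U\subseteq\Sigma$ with outward unit conormal $\nu$ gives
\[
\int_{\partial U}\langle Y,\nu\rangle\,ds=2H\int_{U}\langle Y,N\rangle\,dA .
\]
I would apply this with $Y=\xi$, the vertical Killing field, on the ``rectangle'' $U:=f\big([0,s_0]\times[0,L]\big)$ cut out of the invariant surface $f$ of \autoref{lemweight}, using its arc-length parametrization $\beta$ and the tangent vectors $v_1,v_2$ of \eqref{tangent1}--\eqref{tangent2}. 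The boundary $\partial U$ has four arcs: the generating arc $\beta$ at $s=0$; its translate $\Phi_{s_0}\circ\beta$ at $s=s_0$; a sub-arc at $t=0$ of the $\Gamma$-orbit of $\beta(0)=(0,a,0)$ (the ``top ridge''); and a sub-arc at $t=L$ of the $\Gamma$-orbit of $\beta(L)=\gamma(R)$ (on the ``equator'').

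First I would eliminate three of the four boundary terms. The two sides $\{s=0\}$ and $\{s=s_0\}$ cancel: since both $f$ and $\xi$ are $\Gamma$-invariant (\autoref{propcommut}), the isometry $\Phi_{s_0}$ maps one side onto the other and reverses the outward conormal. Along the top ridge $x(0)=0$ and $\beta'(0)=\gamma'(0)$ is horizontal, so \eqref{tangent1}--\eqref{tangent2} give $v_1=E_3$ and $v_2=E_1$; hence the tangent plane is $\operatorname{span}(E_1,E_3)$, the conormal is parallel to $E_1$, and $\langle\xi,\nu\rangle=\langle E_2,E_1\rangle=0$. Thus the left-hand side reduces to the equatorial arc $\{t=L\}$. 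There $x(L)=R$ and, by \eqref{tangent2}, $v_2=E_2=\xi$; since $\langle v_2,\nu\rangle\,|v_1|$ is the area of the parallelogram spanned by $v_1,v_2$, along this arc $\langle\xi,\nu\rangle\,ds=\sqrt{\det g}\,ds$, and a one-line computation from \eqref{tangent1} (where the bundle-curvature $\tau$ drops out) gives $\det g=\cosh^2\!\big(R\sqrt{-\kappa}\big)$ on $\{t=L\}$. Hence the left-hand side equals $s_0\cosh\!\big(R\sqrt{-\kappa}\big)$.

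For the right-hand side I would compute $\langle\xi,N\rangle\,dA$ directly as $dV(\xi,v_1,v_2)$, where $dV$ is the volume form of $E$: expanding the determinant in the canonical frame with \eqref{tangent1}--\eqref{tangent2} collapses, via $\sinh\tanh+\sech=\cosh$, to $\langle\xi,N\rangle\,dA=x'(t)\cosh\!\big(x(t)\sqrt{-\kappa}\big)\,ds\,dt$. Integrating over $[0,s_0]\times[0,L]$ and using $x(0)=0$, $x(L)=R$ telescopes to $\int_U\langle\xi,N\rangle\,dA=s_0\,\frac{\sinh(R\sqrt{-\kappa})}{\sqrt{-\kappa}}$, so the right-hand side is $2Hs_0\,\frac{\sinh(R\sqrt{-\kappa})}{\sqrt{-\kappa}}$. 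Equating and cancelling $s_0$ leaves $\cosh(R\sqrt{-\kappa})=\frac{2H}{\sqrt{-\kappa}}\sinh(R\sqrt{-\kappa})$, i.e. $\tanh(R\sqrt{-\kappa})=\frac{\sqrt{-\kappa}}{2H}$, which is the asserted value of $R$ (for $\kappa=0$ the same computation, or a limit, gives $R=\tfrac1{2H}$). The formula makes sense precisely when $H>\tfrac{\sqrt{-\kappa}}2=H(E)$, in agreement with \autoref{lemodeekt}; as $H\downarrow H(E)$ one gets $\tanh(R\sqrt{-\kappa})\to1$, hence $R\to\infty$, and as $H\to\infty$ one gets $R\to0$, so the cylinders converge to the horizontal geodesic $c$. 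Finally, the horizontal diameter is the distance along the unit-speed geodesic $\gamma$ between the two points $\gamma(\pm R)$ where the generating curve meets $\gamma$, namely $2R$.

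The real content here is bookkeeping: choosing $\xi$ as the Killing field and $[0,s_0]\times[0,L]$ as the region so that three boundary terms vanish (two by $\Gamma$-symmetry, one because the surface is vertical along the top ridge), and then observing that the two surviving terms are both proportional to $s_0$ with complementary geometric meaning --- the cancellation of $\tau$ in $\det g$, together with the fact that both $\sqrt{\det g}|_{t=L}$ and $dV(\xi,v_1,v_2)$ simplify to elementary hyperbolic expressions, is what lets the flux identity close up into the stated equation. The one subtlety needing care is fixing the orientation of $N$ and the sign in the weight formula consistently, so that the identity comes out as $\tanh(R\sqrt{-\kappa})=\frac{\sqrt{-\kappa}}{2H}$ and not with the opposite sign.
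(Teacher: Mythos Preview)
Your proposal is correct and follows essentially the same approach as the paper: apply the weight formula with the vertical Killing field $\xi=E_2$ to a rectangular piece $f([0,s_0]\times[0,L])$, cancel the two $s$-sides by $\Gamma$-invariance and the $t=0$ side because the conormal there is $E_1$, and equate the surviving $t=L$ boundary term with the area integral. Your use of $dV(\xi,v_1,v_2)$ for the flux integrand and of the parallelogram-area identity for the $t=L$ term are slight repackagings of the paper's explicit computations with \eqref{ffekt}--\eqref{normalekt}, but the substance is identical; the only point where the paper adds something is the convergence to $c$ as $H\to\infty$, which it justifies by comparison with CMC spheres (since $R\to0$ alone controls only the horizontal extent, not the vertical one).
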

	
	\begin{proof}Let $h\colon (-R,R)\to\R$ be the height zero solution from \autoref{ektzeroheight} and let $\gamma$ be the reparametrisation of $\Gamma(t)=T_{h(t)}(\beta(t))$ by arc-length $L$ as in \autoref{lemweight}. We use the weight formula to determine the explicit value of $R$. By \autoref{lemweight} there are $d,e\in\mathcal{C}^2([0,L],\R)$ such that $\gamma=\beta\circ d+(0,e,0)$, that is, $\gamma(t)=T_{e(t)}(\beta(d(t)))$, where $T_s$ is the vertical translation \eqref{ektvtranslation}. Vertical translations and left translations $(\Phi_s)_{s\in\R}$ along $c$ commute by \autoref{propcommut} (c). Therefore the invariant surface $f$, which is given by $f\colon\R\times[0,L]\to\Ekt\mbox{, }f(s,t)=\Phi_s(\gamma(t))$, satisfies
		\[f(s,t)=\Phi_s(T_{e(t)}(\beta(d(t)))=T_{e(t)}(\Phi_s(\beta(d(t))))=\Phi_s(\beta(d(t)))+(0,e(t),0)).\]
		For a bounded domain $\Omega\subset\R\times[0,L]$ with $\partial\Omega$ a closed Jordan curve we let $\eta$ be the outer unit conormal along $f(\partial \Omega)$ and $N$ is the inner normal of the surface. The weight formula (see \cite[Proposition 3]{HLR} for a proof in a general Riemannian three-manifold) yields
		\begin{equation}2H\int_{f(\Omega)}\langle N,Y\rangle=\int_{f(\partial\Omega)}\langle\eta,Y\rangle,\qquad Y\textnormal{ Killing field}.\label{weight}\end{equation}
		We apply \eqref{weight} to the vertical Killing field $Y=\xi=E_2$ and set $\Omega:=[0,1]\times[0,L]$.\index{weight formula}\index{flux formula}
		
		We need some geometric data of the invariant surface $f$, which are easily computed with \autoref{lemweight}. For $v_1=\partial_sf$ and $v_2=\partial_tf$ and using \eqref{tangent1} respectively \eqref{tangent2}, we get:
		
		The entries of the induced metric $g=(\langle v_j,v_k\rangle)_{1\leq j,k\leq 2}$ on $\R\times [0,L]$ are
		\begin{equation}
		\begin{split}
		g_{11}&=\cosh^2(d\sqrt{-\kappa})+4\tau^2\sinh^2(d\sqrt{-\kappa}),\\
		g_{12}&=2\tau \sinh(d\sqrt{-\kappa})e^\prime,\\
		g_{22}&={d^\prime}^2+{e^\prime}^2,\end{split}\label{ffekt}
		\end{equation}
		with
		\begin{equation}
		\det(g)=\cosh^2(d\sqrt{-\kappa})\left({d^\prime}^2+{e^\prime}^2+4\tau^2\tanh^2(d\sqrt{-\kappa}){d^\prime}^2\right).\label{gram}
		\end{equation}
		The inner normal $N$ to $f$ satisfies
		\begin{equation}
		\begin{split}
		\sqrt{\det(g)}N&=\cosh(d\sqrt{-\kappa})e^\prime\left(-\sech(d\sqrt{-\kappa})E_1+\tanh(d\sqrt{-\kappa})E_3\right)\\
		&+\cosh(d\sqrt{-\kappa})d^\prime E_2\\
		&-2\tau\sinh(d\sqrt{-\kappa})d^\prime \left(\tanh(d\sqrt{-\kappa})E_1+\sech(d\sqrt{-\kappa})E_3\right).
		\end{split}\label{normalekt}
		\end{equation}
		First we compute the left-hand side of \eqref{weight}. In view of \eqref{normalekt}, we get
		\[
		2H\int\limits_{f(\Omega)}\langle N,E_2\rangle=2H\int\limits_{[0,1]\times[0,L]}d^\prime(t)\cosh\big(d(t)\sqrt{-\kappa}\big)\,ds\,dt=\frac{2H}{\sqrt{-\kappa}}\sinh\big(R\sqrt{-\kappa}\big).
		\]
		\begin{figure}[ht]
			\begin{center}
				\includegraphics{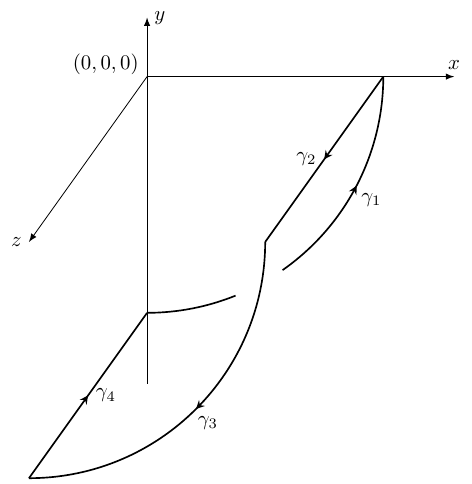}
			\end{center}
			\caption{Application of weight formula}
			\label{elemma}
		\end{figure}
		To compute the right-hand side of \eqref{weight}, we decompose the boundary parametrisation as \[f(\partial\Omega)=\gamma_1\oplus\gamma_2\oplus\gamma_3\oplus\gamma_4,\]
		where
		\[\gamma_1(t)=f(0,t),\quad \gamma_2(s)=f(s,L),\quad \gamma_3(t)=f(1,L-t),\quad \gamma_4(s)=f(1-s,0),\]
		see \autoref{elemma}. We denote by $\eta_1$ to $\eta_4$ the respective unit conormals along $\gamma_1$ to $\gamma_4$. Due to $\gamma_3(t)=\Phi_1(\gamma_1(L-t))$ we have $\gamma_3^\prime(t)=-\gamma_1^\prime(L-t)$ and thus ${\eta_3(t)=-\eta_1(L-t)}$. Since $E_2$ is a constant Killing field, this implies
		\[\int_{\gamma_1}\langle\eta_1,E_2\rangle+\int_{\gamma_3}\langle\eta_3,E_2\rangle=0.\]
		To determine the line integral $\int_{\gamma_4}\langle\eta_4,E_2\rangle$ note that $\gamma_4^\prime(s)=-\partial_sf(1-s,0)=E_3$ and $\partial_tf(1-s,0)=E_1$, i.e., we have \(\eta_4=E_1.\) This shows
		\[\int_{\gamma_4}\langle\eta_4,E_2\rangle=0.\]
		
		Finally we consider $\int_{\gamma_2}\langle\eta_2,E_3\rangle$. We note $\gamma_2^\prime(s)=v_1$ and for the conormal we get \[
		\eta_2=\frac{1}{\sqrt{g_{11}}\sqrt{\det(g)}}\left(-g_{12}v_1+g_{11}v_2\right).                                                                                                                             \]
		At $L$ we have 
		\[
		d(L)=R,\quad d^\prime(L)=0,\quad e(L)=0\quad\mbox{and}\quad e^\prime(L)=1,
		\]
		so that in view of \eqref{tangent1}, \eqref{tangent2} and \eqref{ffekt} evaluation at $L$ gives
		\begin{align*}\langle\eta_4,E_2\rangle\sqrt{g_{11}}&=\frac{-4\tau^2\sinh^2(R\sqrt{-\kappa})+\cosh^2(R\sqrt{-\kappa})+4\tau^2\sinh^2(R\sqrt{-\kappa})}{\cosh(R\sqrt{-\kappa})}\\
		&=\cosh(R\sqrt{-\kappa}).\end{align*}
		Noting that $\sqrt{\langle\gamma_4^\prime,\gamma_4^\prime\rangle}=\sqrt{\langle v_1,v_1\rangle}=\sqrt{g_{11}}$ we get
		\[\int_{\gamma_4}\langle\eta_4,E_2\rangle=\int_{[0,1]}\left[\langle\eta_4,E_2\rangle\cdot\sqrt{\langle\gamma_4^\prime,\gamma_4^\prime\rangle} \right]\,ds=\cosh(\sqrt{-\kappa}R).\]
		
		Combining these results yields
		\[\frac{2H}{\sqrt{-\kappa}}\sinh(R\sqrt{-\kappa})=\cosh(R\sqrt{-\kappa}).\]
		Because of $2H>\sqrt{-\kappa}$ we can solve this equation for $R$ and get
		\[
		R=\frac{1}{\sqrt{-\kappa}}\atanh\left(\frac{\sqrt{-\kappa}}{2H}\right).
		\]
		The unboundedness for $H\to H(E)$ is clear since $\atanh(u)$ is unbounded for $u\to1$. The convergence to $c$ for $H\to\infty$ follows by comparison with \mcH-spheres; here we use embeddedness of \mcH-cylinders in $E(\kappa,\tau)$-spaces with $\kappa\leq0$ and that \mcH-spheres in $\Ekt$ shrink to a point for $H\to\infty$ (see the references at the end of \autoref{ektspheres} for explicit examples).\end{proof}
	
	\begin{Rem}
		We have carried out the same computation for tilted \mcH-cylinders in the space $E(\kappa,0)$ with $\kappa\leq0$. The conormals along $\gamma_2$ and $\gamma_4$ turn out to be a bit more complicated but as a result we get 
		\[R=\frac{1}{\sqrt{-\kappa}}\atanh\left(\frac{\sqrt{-\kappa}}{2H}\right),\]
		as in \autoref{propweight}. We have not included the computation.
	\end{Rem}
	
	\section{Possible generalisations and open problems}
	Throughout the paper we have already indicated some open problems, see \autoref{emcylconj}, \autoref{nonembeddedcyl} and \autoref{emcylconj2}. In this final section we point out how techniques could be generalised to obtain new examples, and we mention related open problems.
	
	\subsection{Tilted \texorpdfstring{\mcH}{mcH}-cylinders in \texorpdfstring{$\widetilde{\PSL}_2(\R)$}{PSL(2,R)}}
	We use the model $\Ekt$ as introduced in \autoref{sec42} to describe $\widetilde{\PSL}_2(\R)$. For $\tau\neq0$ we set $\widetilde{\PSL}_2(R)=\mathbb{E}(-1,\tau)$. Then the base space $(\R^2,\tilde{g})$ with respect to \eqref{ektbase} is isometric to the hyperbolic plane with constant curvature $-1$. The critical mean curvature is $1/2$, so that we only consider \mcH-surfaces for $H>1/2$.
	
	By \autoref{slope}, a geodesic $c$ in $\widetilde{\PSL}_2(\R)=\mathbb{E}(-1,\tau)$ with slope $\alpha\in(0,\pi/2)$ projects onto a curve $\tilde{c}=\Pi\circ c$ of constant curvature $-\tau\cot(\alpha)$ in $(\R^2,\tilde{g})$. Say this curve is a geodesic circle in the hyperbolic plane $(\R^2,\tilde{g})$ with centre $\tilde{p}\in\R^2$. Emanating from $\tilde{p}$ are geodesic rays $\tilde{\beta}_s$ which intersect $\tilde{c}$ orthogonally in $\tilde{c}(s)$. Placing a curve $\gamma$ in the vertical plane $P_0=\Pi^{-1}\big(\tilde{b}_0\big)$, we can again define an invariant surface $f(s,t)=\Phi_s(\gamma(t))$, where $\Phi_s=\mathcal{L}_{c(s)}$ is the left translation along $c$. These left translations can be thought of as a kind of screw-motions in $\widetilde{\PSL}_2(\R)$. Is there a simple closed embedded curve $\gamma$ in $P_0$ which generates an \mcH-cylinder for $H>1/2$? Is it embedded? Note that in $\Nil_3=\mathbb{E}(0,\tau)$ such surfaces exist thanks to \cite{FMP}.
	
	Again, we choose a horizontal lift $\beta$ of $\tilde{\beta}_0$ such that $\beta(0)=c(0)$. For a vertical graph $\gamma(t)=T_{h(t)}(\beta(t))\subseteq P_0$ over $\beta$ we consider $f(s,t)=\Phi_s(\gamma(t))$ and require its mean curvature to be a constant $H>1/2$. This yields an ODE for $h$ of the form \[h^{\prime\prime}(t)=F(t,h(t),h^\prime(t)).\] The only isometry commuting with $\Phi_s$ is the half-turn rotation $\rho$ about $\beta$. Therefore we cannot expect a symmetric solution. Nevertheless, the comparison arguments with \mcH-spheres will still show boundedness of the existence interval $(R_-,R_+)$ and of the graph for any choice of initial values $h(0)=a$ and $h^\prime(0)=b$. The task is then to determine initial values $a_0$ and $b_0$ such that the graph of $h$ meets $\beta$ orthogonally and satisfies $h(R_\pm)=0$. If $h(t)<0$ for all $t\in(R_-,R_+)$ we can extend the vertical graph to an embedded simple closed curve generating a possibly self-intersecting tilted \mcH-cylinder in $\widetilde{\PSL}_2(\R)$. The argument from \autoref{chapter23} to compute the existence interval should carry over, but the computations will be more involved.
	
	\subsection{\texorpdfstring{\mcH}{mcH}-cylinders in general metric semi-direct products}
	
	In both parts the ambient spaces were metric semi-direct products $\R^2\ltimes_A\R$ where $A\in\operatorname{M}(2,\R)$. In these metric Lie groups \mcH-spheres $S_H$ exist for all $H>\frac{\operatorname{trace}(A)}{2}$, are unique up to isometries, have a centre $p$ and are Alexandrov-embedded by \cite{Meeks2017}. Note that Alexandrov-embeddedness still allows for comparison arguments involving the maximum principle.
	
	In $\R^2\ltimes_A\R$ any $z$-axis admits half-turn rotations; see \cite[Section 2.3.]{MP}. Lets fix $c(s)=(0,0,s)$ and consider $\Phi_s=\Lcal_{c(s)}$. One then has $\Phi_s(x,y,z)=(e^{As}(x,y),z+s)$, so that the foliation by $(x,y)$-planes $S_s=\{z=s\}$ is invariant under $(\Phi_s)_{s\in\R}$. Placing $\gamma(t)=(t,h(t),0)$ in $S_0$, we define a $(\Phi_s)_{s\in\R}$-invariant graphical surface by \[f(s,t):=\Phi_s(\gamma(t))\mbox{.}\] Unlike in $\Sol$, we have less symmetries at hand and a symmetric solution is not guaranteed. The situation is similar to that for tilted \mcH-cylinders in $\widetilde{\PSL}_2(\R)$: A bounded existence interval $(R_-,R_+)$ and a bounded graph will be guaranteed by comparison with \mcH-spheres. For a height zero half-cylinder solution different arguments are needed. If it exists, we can extend it by a half-turn rotation about $c$ to an embedded simple closed curve generating an \mcH-cylinder in $\R^2\ltimes_A\R$.
	
	\subsection{Unduloids in \texorpdfstring{$\Sol$}{Sol} and other homogeneous three-manifolds}
	Once existence translationally invariant \mcH-cylinders is settled, it is natural to look for more examples of properly embedded \mcH-annuli. Generalising unduloids to $\Sol$, $\mathbb{H}^2\times\R$, $\widetilde{\PSL}_2(\R)$ and $\Nil_3$ is therefore an interesting and difficult problem. In Euclidean space, an unduloid is a rotationally invariant \mcH-annulus about a geodesic axis $c$. It turns out to be a singly periodic surface with respect to a discrete subgroup of left translations along $c$. Likewise, an unduloid with geodesic axis $c$ in a metric Lie group $X$ is an Alexandrov-embedded \mcH-annulus with mean curvature $H>H(X)$ which is singly periodic with respect to a discrete subgroup of left translations along $c$.
	
	In $\mathbb{H}^2\times\R$, $\widetilde{\PSL}_2(\R)$ and $\Nil_3$ unduloids with a vertical geodesic axis exist as rotationally invariant \mcH-surfaces. Only in $\mathbb{H}^2\times\R$ existence of unduloids with a horizontal axis is known. They have been constructed by \cite{ManzanoTorralbo} using the Daniel correspondence from \cite{Daniel}, which relates \mcH-surfaces in $\mathbb{H}^2\times\R$ with mean curvature $H>1/2$ to minimal surfaces in a Berger sphere. So, for $\widetilde{\PSL}_2(\R)$ and $\Nil_3$ existence of horizontal unduloids is an open problem. In these spaces the difficulty is the lack of symmetries preserved by the Daniel correspondence.
	
	In \cite{Vrzina}, the author has studied the existence problem for tilted unduloids in $\mathbb{H}^2\times\R$ by means of the Daniel correspondence. Alexandrov reflection shows that any unduloid in $\mathbb{H}^2\times\R$ has a vertical mirror plane. The Daniel correspondence implies that, for the known examples, the corresponding minimal surfaces in a Berger sphere are embedded minimal annuli bounded by linked horizontal geodesics. If any pair of linked horizontal geodesics in a Berger sphere bounds exactly two embedded minimal annuli then existence of tilted unduloids in $\mathbb{H}^2\times\R$ follows.
	
	Finally, for $\Sol$ we expect unduloids to exist for the geodesic axes \eqref{solgeodesics} leading to horizontal and vertical \mcH-cylinders in $\Sol$. Unfortunately, a correspondence relating \mcH-surfaces with $H>0$ in $\Sol$ to minimal surfaces in another three-manifold is unknown, and a direct construction of unduloids in $\Sol$ seems very unlikely as it was in the case of \mcH-spheres. Since $\Sol$ admits two families of mirror planes, the best course of action in $\Sol$ seems to be to find a Daniel type correspondence.
	
	\appendix
	\section{ODE for horizontal \texorpdfstring{\mcH}{mcH}-cylinders in \texorpdfstring{$\Sol$}{Sol}}
	We compute the ODE for horizontal \mcH-cylinders in $\Sol$. We used this explicit representation to compute the examples in \autoref{chapter12}. 
	
	The mean curvature of an surface $f$ invariant by left translations along the base in $\Sol$ is easy to compute in terms of the orthonormal frame \eqref{onf} from \autoref{chapter11}:
	\begin{Prop}\label{odesol}Let $f$ be as in \eqref{surface}, i.e., $f$ parametrises a surface invariant by left translations along the base $c$ in $\Sol$. Then we have \[C:=\sqrt{\det(g)}=\sqrt{{x^\prime}^2+{y^\prime}^2+(x^\prime y+xy^\prime)^2}\]
		for the induced Riemannian metric $g$ on $\R\times J$. Moreover the mean curvature $H$ of $f$ in terms of $\gamma=(x,y,0)$ with respect to the inner normal satisfies the equation
		\begin{equation}
		\begin{split}
		2HC^3&= \left[xy^\prime-x^\prime y+(x^2-y^2)(xy^\prime +x^\prime y)\right]\cdot\left[{x^\prime}^2+{y^\prime}^2\right]\\
		&\hphantom{=}+2(yy^\prime +xx^\prime)(yy^\prime-xx^\prime)(xy^\prime +x^\prime y)\\
		&\hphantom{=}+(x^2+y^2+1)\left(x^\prime y^{\prime\prime}-x^{\prime\prime}y^\prime+({x^\prime}^2-{y^\prime}^2)(xy^\prime+x^\prime y)\right).
		\end{split}\label{odecurve}
		\end{equation} 
	\end{Prop}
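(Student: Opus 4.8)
The plan is a direct computation in the canonical orthonormal frame $(E_1,E_2,E_3)$ of Section~1, in the spirit of the proof of \autoref{odesol2}.

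The first step is to express the coordinate vector fields of $f$ in that frame. At $f(s,t)=(e^{-s}x(t),e^{s}y(t),s)$ one has $E_1=e^{-s}\partial_x$, $E_2=e^{s}\partial_y$, $E_3=\partial_s$, and a short calculation gives
\[
v_1:=\partial_s f=-xE_1+yE_2+E_3,\qquad v_2:=\partial_t f=x^\prime E_1+y^\prime E_2 .
\]
Here $v_1$ is precisely the Killing field $K_3$ generating $\Gamma$, restricted to $f$, which re-confirms that the mean curvature does not depend on $s$. From this the first fundamental form is immediate: $g_{11}=x^2+y^2+1$, $g_{12}=yy^\prime-xx^\prime$, $g_{22}={x^\prime}^2+{y^\prime}^2$, and when one expands $\det g=g_{11}g_{22}-g_{12}^2$ the terms $x^2{x^\prime}^2$ and $y^2{y^\prime}^2$ cancel against $(yy^\prime-xx^\prime)^2$, leaving $\det g={x^\prime}^2+{y^\prime}^2+(x^\prime y+xy^\prime)^2$, which is the claimed formula for $C$. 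The unit vector orthogonal to $v_1$ and $v_2$ is $N=\tfrac1C\bigl(-y^\prime E_1+x^\prime E_2-(x^\prime y+xy^\prime)E_3\bigr)$ up to sign; I would fix the sign so that $N$ is the inner normal — checking on the generating curve in $S_0$ shows this is the sign whose in-plane projection points toward the axis, and it is the sign producing \eqref{odecurve} with $+2H$ on the left.

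The second step is to compute the covariant derivatives $\nabla_{v_i}v_j$ from the connection table of Section~1. The one simplification worth recording is that $x,y,x^\prime,y^\prime$ depend on $t$ only, so $v_1$ annihilates each of them; then
\[
\nabla_{v_1}v_1=-xE_1-yE_2+(y^2-x^2)E_3,\qquad
\nabla_{v_1}v_2=(xx^\prime+yy^\prime)E_3,
\]
\[
\nabla_{v_2}v_2=x^{\prime\prime}E_1+y^{\prime\prime}E_2+({y^\prime}^2-{x^\prime}^2)E_3 .
\]
Pairing these with $N$ yields $b_{11}$, $b_{12}$, $b_{22}$. Finally I substitute into $2H=b_{11}g^{11}+2b_{12}g^{12}+b_{22}g^{22}$, use $g^{11}=g_{22}/\det g$, $g^{12}=-g_{12}/\det g$, $g^{22}=g_{11}/\det g$, and multiply by $C^{3}$. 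One factor of $C$ cancels the $1/C$ in $N$ and another cancels $\det g$ in the $g^{ij}$, so that $b_{11}g^{11}$ contributes $[xy^\prime-x^\prime y+(x^2-y^2)(xy^\prime+x^\prime y)]\,[{x^\prime}^2+{y^\prime}^2]$, the term $2b_{12}g^{12}$ contributes $2(yy^\prime+xx^\prime)(yy^\prime-xx^\prime)(xy^\prime+x^\prime y)$, and $b_{22}g^{22}$ contributes $(x^2+y^2+1)\bigl(x^\prime y^{\prime\prime}-x^{\prime\prime}y^\prime+({x^\prime}^2-{y^\prime}^2)(xy^\prime+x^\prime y)\bigr)$; summing these three gives \eqref{odecurve}.

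There is no conceptual difficulty here. The only genuine work is the bookkeeping in the last step: tracking the signs coming from $N$, from the $g^{ij}$, and from the connection coefficients, and spotting the polynomial cancellations — above all the factorization of $\det g$ — that collapse the expression into the compact form \eqref{odecurve}. As a sanity check on orientation I would additionally evaluate $N$ on an explicit simple closed generating curve to confirm that the chosen sign is indeed the inner normal, so that the statement is correct as written.
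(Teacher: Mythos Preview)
Your proposal is correct and follows essentially the same approach as the paper's own proof: compute $v_1,v_2$ in the orthonormal frame, read off $g_{ij}$ and $N$, use the connection table to get $\nabla_{v_i}v_j$ and hence $b_{ij}$, and substitute into the mean curvature formula. The intermediate expressions you write down for $v_1$, $v_2$, $g_{ij}$, $N$, and the three covariant derivatives coincide verbatim with those in the paper, and your handling of the factor $C$ in assembling $2HC^3$ is exactly the paper's final step.
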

	
	\begin{proof}[Sketch of proof]We have
		\[v_1:=\frac{\partial f}{\partial s}(s,t)=\begin{pmatrix}
		-e^{-s}x(t)\\ e^sy(t)\\1
		\end{pmatrix}=-xE_1+yE_2+E_3\]
		and
		\[v_2:=\frac{\partial f}{\partial t}(s,t)=\begin{pmatrix}
		e^{-s}x^\prime(t)\\e^sy^\prime(t)\\ 0
		\end{pmatrix}=x^\prime E_1+y^\prime E_2.\]
		
		Thus the inner normal $N$ to $f$ is
		\begin{equation}N=\frac{1}{\sqrt{{x^\prime}^2+{y^\prime}^2+(x^\prime y+xy^\prime)^2}}\left[-y^\prime E_1+x^\prime E_2-(xy^\prime +x^\prime y)E_3\right].\label{normal}
		\end{equation}
		
		The entries of the induced metric $g=(\langle v_j,v_k\rangle)_{1\leq j,k\leq 2}$ on $\R\times J$ are
		\begin{equation}
		\begin{split}
		g_{11}&=x^2+y^2+1,\\
		g_{12}&=-xx^\prime+yy^\prime,\\
		g_{22}&={x^\prime}^2+{y^\prime}^2.\end{split}\label{ff}
		\end{equation}
		Furthermore let us compute $\nabla_{v_j}v_k$ for $j,k\in\{1,2\}$:
		\begin{align*}
		\nabla_{v_1}v_1&=-xE_1-yE_2+(y^2-x^2)E_3,\\
		\nabla_{v_1}v_2&=(xx^\prime+yy^\prime)E_3,\\
		\nabla_{v_2}v_2&=x^{\prime\prime}E_1+y^{\prime\prime}E_2+\left({y^\prime}^2-{x^\prime}^2\right)E_3.
		\end{align*}
		
		It can be checked that $C:=\sqrt{\det(g)}$ agrees with the denominator of the coefficients in \eqref{normal}, i.e. we have \[C\cdot N=-y^\prime E_1+x^\prime E_2-(xy^\prime+x^\prime y)E_3.\]
		Thus the second fundamental form $b=\left(\langle \nabla_{v_j}v_k,N\rangle\right)_{1\leq j,k\leq 2}$ satisfies:
		\begin{align*}
		C b_{11}&=xy^\prime-x^\prime y+(x^2-y^2)(xy^\prime+x^\prime y),\\
		C b_{12}&=-(xx^\prime+yy^\prime)(xy^\prime+x^\prime y),\\
		C b_{22}&=-x^{\prime\prime}y^\prime+x^\prime y^{\prime\prime}+({x^\prime}^2-{y^\prime}^2)(xy^\prime+x^\prime y).
		\end{align*}
		In order to verify \eqref{odecurve}, the previous expressions must be plugged into
		\begin{align*}
		2H&=\frac{g_{22}}{C^2}b_{11}-2\frac{g_{12}}{C^2}b_{12}+\frac{g_{11}}{C^2}b_{22}=\frac{g_{22} C b_{11}-2g_{12} C b_{12}+g_{11} C b_{22}}{C^3}.\qedhere
		\end{align*}
	\end{proof}
	
	%
	%
	%
	%
	%
	%
	%
	%
	
	
	\subsubsection*{Acknowledgement}This work is an extended part of the author's Ph.D. thesis at TU Darmstadt. The author would like to thank his advisor Karsten Gro\ss{}e-Brauckmann for guidance and suggestions throughout the preparation of this paper, and the referee for a thorough report resulting in significant improvements.
	\bibliographystyle{amsalpha} 
	\bibliography{cylinder}
	
	
\end{document}